\documentclass[a4paper,12pt,reqno]{amsart}

\usepackage{amsmath, amsfonts, amssymb, amsthm, mathrsfs, mathtools, mathalfa, setspace}
\usepackage{caption}
\usepackage{comment}
\usepackage{yfonts}
\usepackage{enumerate}
\usepackage{enumitem}
\usepackage{stmaryrd}
\usepackage{fancyhdr}
\usepackage{bm}
\usepackage[utf8]{inputenc}
\usepackage[pdfencoding=auto,hyperfootnotes=false,backref=page]{hyperref}
\usepackage{tikz,tikz-cd}
\usepackage[hang,flushmargin]{footmisc}
\usepackage{xr}
\usepackage{extarrows}
\usepackage{todonotes}
\usepackage{faktor}
\usepackage{xfrac}
\usepackage{times}
\usepackage{color}
\usepackage{graphicx}
\usetikzlibrary{babel}
\usepackage{xurl}

\usetikzlibrary{arrows}
\usetikzlibrary{positioning, arrows.meta}

\usepackage{tabularx}
\usepackage{multirow}

\makeatletter
\def\@tocline#1#2#3#4#5#6#7{\relax
  \ifnum #1>\c@tocdepth 
  \else
    \par \addpenalty\@secpenalty\addvspace{#2}%
    \begingroup \hyphenpenalty\@M
    \@ifempty{#4}{%
      \@tempdima\csname r@tocindent\number#1\endcsname\relax
    }{%
      \@tempdima#4\relax
    }%
    \parindent\z@ \leftskip#3\relax \advance\leftskip\@tempdima\relax
    \rightskip\@pnumwidth plus4em \parfillskip-\@pnumwidth
    #5\leavevmode\hskip-\@tempdima
      \ifcase #1
       \or\or \hskip 1em \or \hskip 2em \else \hskip 3em \fi%
      #6\nobreak\relax
    \dotfill\hbox to\@pnumwidth{\@tocpagenum{#7}}\par
    \nobreak
    \endgroup
  \fi}
\makeatother

\topmargin -0.2 cm
\evensidemargin 0cm
\oddsidemargin 0cm
\textheight 24 cm
\marginparwidth = 55pt
\textwidth 16cm
\setlength{\parindent}{0.7 cm}
\setlength{\footskip}{1.5 cm}
\setlength {\marginparwidth }{2cm}

\newtheorem{theorem}{Theorem}[section]
\newtheorem{lemma}[theorem]{Lemma}

\newtheorem{corollary}[theorem]{Corollary}
\newtheorem{proposition}[theorem]{Proposition}

\theoremstyle{definition}
\newtheorem{defn}[theorem]{Definition}
\newtheorem{remark}[theorem]{Remark}
\newtheorem{example}[theorem]{Example}

\newcommand*{\sbr}[1]{\scalebox{0.8}{$(#1)$}}
\newcommand*{\db}[1]{\llbracket #1\rrbracket}

\newcommand{\mc}{\mathcal}

\newcommand{\mf}{\mathbf}
\newcommand{\mb}{\mathbb}

\newcommand{\wh}{\widehat}
\newcommand{\wt}{\widetilde}


\newcommand{\id}{\mathrm{id}}

\newcommand{\stab}{\mathrm{Stab}}
\newcommand{\aut}{\mathrm{Aut}}

\newcommand{\ab}{\mathrm{Z}}


\newcommand{\tran}{\mathrm{\Theta}}

\newcommand{\poly}{\mathrm{poly}}

\newcommand{\q}{\mathrm{c}}
\newcommand{\ns}{\mathrm{X}}
\newcommand{\nss}{\mathrm{Y}}
\newcommand{\co}{\circ\hspace{-0.02cm}}
\newcommand{\cu}{\mathrm{C}}






\setlength {\marginparwidth }{2cm}

\hyphenation{pro-per-ty}
\hyphenation{be-tween}
\hyphenation{theo-rem}
\hyphenation{exam-ple}
\hyphenation{de-cla-ring}
\hyphenation{ope-ra-tion}
\hyphenation{pro-ba-bi-li-ty}
\hyphenation{to-po-lo-gi-cal}
\hyphenation{ge-ne-ra-li-za-tion}
\hyphenation{equi-va-lent}
\hyphenation{se-ve-ral}
\hyphenation{dif-fe-rence}
\hyphenation{cha-rac-te-ris-tic}
\hyphenation{de-pen-ding}
\hyphenation{pro-per-ties}
\hyphenation{cons-truc-tion}
\hyphenation{pu-bli-shed}
\hyphenation{nil-ma-ni-fold}
\hyphenation{De-fi-ni-tion}
\hyphenation{des-cri-be}
\hyphenation{ge-ne-ra-li-ze}
\hyphenation{to-po-lo-gy}
\hyphenation{Ma-the-ma-ti-cal}
\hyphenation{equi-va-len-ce}
\hyphenation{ul-tra-pro-duct}
\hyphenation{ul-tra-pro-ducts}
\hyphenation{in-vol-ving}

\setcounter{tocdepth}{2}

\begin{document}

\vspace*{-0.8cm}

\title[The Jamneshan--Tao conjecture for finite abelian groups of bounded rank]{The Jamneshan--Tao conjecture\\ for finite abelian groups of bounded rank}

\author{Pablo Candela}
\address{Instituto de Ciencias Matem\'aticas, Calle Nicol\'as Cabrera 13-15, Madrid 28049, Spain}
\email{pablo.candela@icmat.es}

\author{Diego Gonz\'alez-S\'anchez}
\address{Universit\'e Paris Cit\'e, Sorbonne Universit\'e, CNRS, IMJ-PRG, F-75013 Paris, France}
\email{gonzalezsanchez@imj-prg.fr}

\author{Bal\'azs Szegedy}
\address{HUN-REN Alfr\'ed R\'enyi Institute of Mathematics\\ 
Re\'altanoda utca 13-15\\
Budapest, Hungary, H-1053}
\email{szegedyb@gmail.com}

\maketitle

\begin{abstract} 
We confirm the Jamneshan--Tao conjecture for finite abelian groups of rank at most a fixed integer $R$ (i.e.\ finite abelian groups generated by at most $R$ elements), by proving an inverse theorem for 1-bounded functions of non-trivial Gowers norm on such groups, concluding that such a function must correlate non-trivially with a nilsequence of bounded complexity.
\end{abstract}

\section{Introduction}

\subsection{Background}\hfill\\
\noindent The field of higher-order Fourier analysis arose in arithmetic combinatorics,  from efforts to extend  the Fourier-analytic methods used in the proof of Roth’s theorem to the full generality of Szemerédi’s theorem on arithmetic progressions. This field was initiated by Gowers in \cite{GSz}, and it developed principally around the  \emph{uniformity norms} (or \emph{Gowers norms}), which involve averaging a function over combinatorial cubes, as we now recall.

\begin{defn}\label{def:Uk}
Let $\ab$ be a finite abelian group and let $k\ge 2$ be an integer. The \emph{Gowers uniformity norm of order $k$ on $\ab$}, or \emph{$U^k$-norm}, is defined on the vector space of functions $f:\ab\to \mb{C}$ by the  formula\footnote{Where $\mc{C}^n$ stands for complex conjugation applied $n$ times, and $|v|:=v_1+\cdots+v_k$ for $v\in \{0,1\}^k$.}
$\|f\|_{U^k}^{2^k}:=\mb{E}_{x,h_1,\ldots,h_k\in\ab}\prod_{v\in \{0,1\}^k} \mc{C}^{|v|} f(x+v_1h_1+\cdots +v_kh_k)$.
\end{defn}

\noindent In the same decade, Gowers developed an approach to hypergraph regularity based on  norms which are closely related to the uniformity norms (known as \emph{octahedral norms} or \emph{box norms}) \cite{Gow-3hyps,Gow-khyps}.
These interconnected developments placed higher-order Fourier analysis at the center of a deep and far-reaching new paradigm, in which useful and versatile notions of quasirandomness and regularity for  structures of various types (including subsets of abelian groups, but also graphs and hypergraphs) can be analyzed and quantified using norms involving specific key configurations (e.g.\ Gowers norms involving combinatorial cubes in abelian groups, or box norms involving octahedra in hypergraphs).\footnote{This paradigm extends also into ergodic theory, via the Host--Kra seminorms analogous to Gowers norms \cite{HKbook}.} Central examples of this paradigm are the \emph{regularity lemmas} in which the quasirandomness is expressed as smallness in such a norm (see \cite[Theorem 8.10]{Gow-3hyps} and \cite[Theorem 1.2]{GTarith} for examples in hypergraph theory and arithmetic combinatorics respectively). A related family of examples is that of \emph{inverse theorems} for such norms. 

Inverse theorems emerged in arithmetic combinatorics, focusing on the Gowers norms\footnote{There are also results in graph and hypergraph theory that can be viewed as inverse theorems, involving the \emph{cut norm} and higher-order generalizations; see for instance \cite[Theorems 3.1 and 4.1]{Gow-3hyps}.} \cite{GT08}. Generally speaking, the idea of an inverse theorem for the $U^k$-norm on some finite abelian group $\ab$ is to deduce information of harmonic-analytic type about the structure of a function $f:\ab\to\mb{C}$, from the more combinatorial assumption that the norm $\|f\|_{U^k}$ is non-trivially large. The Gowers norms form an increasing sequence (that is,  we always have $\|f\|_{U^k}\leq \|f\|_{U^{k+1}}$), so the simplest case of such an inverse theorem concerns the smallest of these norms, namely the $U^2$-norm (for which the initial assumption of largeness is the strongest). In this case, an inverse theorem is easily deduced from the following simple formula expressing the $U^2$-norm of a function $f$ in terms of its Fourier transform\footnote{Recall that the Fourier transform $\wh{f}$ is the function $\ab\to\mb{C}$ defined by $\wh{f}(\xi)=\mb{E}_{x\in \ab} f(x)\overline{e(\xi\cdot x)}$, where $e(\theta)=\exp(2\pi i\theta)$ for any $\theta\in \mb{R}/\mb{Z}$, and $(\xi,x)\mapsto \xi\cdot x$ is a symmetric non-degenerate bilinear form $\ab\times \ab\to \mb{R}/\mb{Z}$; we refer to \cite[Section 4]{T-V} for basic background concerning Fourier analysis on finite abelian groups.}: $\|f\|_{U^2}=(\sum_{\xi\in \ab} |\wh{f}(\xi)|^4)^{1/4}$.

From this, one easily deduces the following statement, which is the aforementioned simplest example of an inverse theorem for Gowers norms: if $f:\ab\to \mb{C}$ is \emph{1-bounded}\footnote{We say that a function $f:\ab\to\mb{C}$ is 1-bounded if $|f(x)|\leq 1$ for every $x\in\ab$.} and $\|f\|_{U^2}\geq \delta>0$, then there is a Fourier character $x\mapsto e(\xi\cdot x)$ on $\ab$ such that $|\mb{E}_{x\in \ab} f(x)\overline{e(\xi\cdot x)}|\geq \delta^2$.

When we try to obtain an analogous statement for the $U^k$-norm for $k>2$ (thus seeking an inverse theorem for this norm), we soon find that Fourier characters do not suffice to obtain the conclusion,\footnote{See for instance \cite[\S 4]{GSz}, with the example of the function $f(x)=e(x^2/N)$ on $\mb{Z}/N\mb{Z}$, which has maximally large $U^3$-norm but no large Fourier coefficient; see also \cite[Exercise 11.1.12]{T-V}.} and we come to a central problem in this topic: to identify generalizations of Fourier characters yielding useful inverse theorems.

The initial progress on this problem concerned specific families of finite abelian groups. 

The first version of a useful inverse theorem, valid for the $U^k$-norm for every $k\geq 2$, was proved by Green, Tao, and Ziegler \cite{GTZ}, essentially for the family of finite cyclic groups $\mb{Z}/N\mb{Z}$, where $N$ is usually assumed to be prime (this is often called the \emph{integer setting}). This result had powerful applications in number theory (in particular for counting various kinds of linear configurations in the prime numbers \cite{GT-lin}). Adequate generalizations of Fourier characters in this setting turned out to be the functions known as \emph{nilsequences}. These generalizations consist mainly in replacing the circle group $\mb{R}/\mb{Z}$ (underpinning classical Fourier characters) with more general \emph{filtered nilmanifolds}, and replacing  homomorphisms $\ab\to\mb{R}/\mb{Z}$ with \emph{polynomial maps} from $\ab$ to such  nilmanifolds. Let us recall the formal definition. 
\begin{defn}\label{def:nilseqs}
Let $\ab$ be a finite abelian group. A (polynomial) \emph{nilsequence of degree $k$} on $\ab$ is a function $\ab\to\mb{C}$ of the form $x\mapsto F(g(x))$ constructed as follows. There is a filtered nilmanifold $(G/\Gamma,G_\bullet)$ of degree $k$ (see Definition \ref{def:nilmanifold}) and a polynomial map $\varphi:\ab\to G$ relative to the filtration $G_\bullet$ (see\footnote{Equivalently, a map $\varphi:\ab\to G$ is polynomial relative to the given filtration $G_\bullet=(G_i)_{i\geq 0}$ if for any $t_1,\ldots,t_i,x\in \ab$ we have  $\partial_{t_1}\cdots\partial_{t_i}f(x)\in G_i$, where for any $t\in \ab$ we define the discrete derivative $\partial_t h:\ab\to G$, $x\mapsto h(x)^{-1}h(x+t)$.} Definition \ref{def:nsmorphisms}) such that $g:\ab\to G/\Gamma$ is the map $g(x)=\varphi(x)\Gamma$, and $F$ is a continuous map $G/\Gamma\to \mb{C}$. For this notion to be non-trivial, we usually require the nilsequence to have \emph{bounded complexity}. This requirement consists in first fixing a \emph{complexity notion} for filtered nilmanifolds, that is, an arbitrary ordering of the countable set of (isomorphism classes of) nilmanifolds, and then declaring that a nilsequence $F(g(x))$ has \emph{complexity at most $M$} if $G/\Gamma$ has position at most $M$ in the ordering and the Lipschitz norm of $F$ is at most $M$.
\end{defn}

\noindent Another family of abelian groups that was central to the early developments of inverse theorems is that of vector spaces $\mb{F}_p^n$ over a finite field of fixed prime characteristic $p$ (this is often called the \emph{finite-field setting}) \cite{BTZ,T&Z-Low}. Here, adequate  generalizations of Fourier characters for the $U^k$-norm turned out to be the \emph{polynomial phase functions of degree $k-1$}, i.e.\ functions of the form $x\in \mb{F}_p^n$ $\mapsto$ $e(P(x))$ where $P:\mb{F}_p^n\to \mb{R}/\mb{Z}$ is a polynomial\footnote{This means that any $k$-fold discrete derivative $\partial_{t_1}\cdots \partial_{t_k}P$ vanishes everywhere.} of degree at most $k-1$. Note that polynomial phase functions of degree $k-1$ are polynomial nilsequences\footnote{Indeed the underlying filtered nilmanifold here is $\mb{R}/\mb{Z}$ equipped with the degree-$(k-1)$ filtration $G_\bullet$ on $\mb{R}$ where $G_i=\mb{R}$ for all $i\in [0,k-1]$ (this coincides with the structure  denoted $\mc{D}_{k-1}(\mb{R}/\mb{Z})$ recalled in Definition \ref{def:Dkab}). Note that $P:\mb{F}_p^n\to\mb{R}/\mb{Z}$ is polynomial of degree $k-1$ in the sense of the previous footnote if and only if $P$ is polynomial in the sense of Definition \ref{def:nsmorphisms}, for  $\mb{R}/\mb{Z}$ equipped with the mentioned filtration $G_\bullet$ and $\mb{F}_p^n$ equipped with the lower-central series filtration.} of degree $k-1$.

In the two initial settings mentioned above, the proofs of the corresponding inverse theorems were different conceptually. A deeper understanding of this topic was then sought, especially in order to obtain a more general inverse theorem, valid for all finite abelian groups, which would unify these two initial settings. An important part of this effort centered on identifying the fundamental properties that must be satisfied by any set equipped with  combinatorial cubic structures, abstracting in particular from the cubes involved in Gowers norms. Such a program had already been initiated by Host and Kra in \cite{HKparas}, and this led to the theory of \emph{nilspaces}  (see Definition \ref{def:nilspace} below). This theory, pioneered in \cite{CamSzeg} (see also  \cite{Cand:Notes1,Cand:Notes2} for a more detailed account), provided a framework in which a general inverse theorem could indeed be obtained. For instance, an inverse theorem for all finite abelian groups was established in \cite[Theorem 5.2]{CSinverse}, from which the inverse theorems from the integer setting \cite{GTZ} and the finite-field setting \cite{T&Z-Low} could both be deduced (see \cite{CSinverse} and \cite{CGSS}). In this theorem, the generalizations of Fourier characters, known as \emph{nilspace polynomials}, are functions of the form $F(g(x))$ similar to nilsequences, except that instead of $g$ being a polynomial map into a nilmanifold, it is a \emph{nilspace morphism} into a \emph{compact and finite-rank} (\textsc{cfr}) \emph{nilspace}  (see Definitions \ref{def:CFR} and \ref{def:nsmorphisms}). 

While they are similar to nilsequences, nilspace polynomials are strictly more general objects. Indeed, while \textsc{cfr} nilspaces can always be described in terms of nilmanifolds (for a recent example see \cite[Theorem 1.4]{CGSS-projnil}), it is also known since the beginning of this theory that some \textsc{cfr} nilspaces are \emph{not} nilmanifolds (see \cite[Example 6]{HKparas}). It is then natural to wonder whether polynomial nilsequences suffice as generalizations of Fourier characters (among the more general nilspace polynomials) to obtain inverse theorems for Gowers norms on all finite abelian groups. An interesting conjecture of Jamneshan and Tao posits that this is indeed the case \cite[Conjecture 1.11]{J&T}. This conjecture is the main focus of this paper.

Recently, several works have made progress towards a proof of the Jamneshan--Tao conjecture, including an inverse theorem for finite abelian groups in terms of projected nilsequences \cite{CGSS-projnil}, and a proof of the conjecture in the case of groups of bounded exponent \cite{JST-bndexp}. 

We refer to all the aforementioned works for further background on inverse theorems for the Gowers norms and on higher-order Fourier analysis more generally. We also refer to works in the quantitative direction of improving the bounds for the inverse theorem, for specific classes of finite abelian groups \cite{LSS, Manners, Mili2}, or for specific Gowers norms on all such groups \cite{Mili-Gen}.

\subsection{Main results}\hfill\\
\noindent Recall that the rank of a finite abelian group is the minimum cardinality of a generating subset of the group. The main result of this paper is the following inverse theorem for Gowers norms, proving the Jamneshan--Tao conjecture in the case of abelian groups of bounded rank. 

\begin{theorem}\label{thm:bndrankinv-intro}
For any $k,R\in \mb{N}$ and $\delta>0$, there is $\varepsilon>0$ and a finite collection $\mc{N}_{k,R,\delta}$ of degree-$k$ filtered nilmanifolds $G/\Gamma$, each equipped with a smooth Riemannian metric and with connected and simply-connected ambient group $G$, such that the following holds. For any  finite abelian group $\ab$ of rank at most $R$, and any 1-bounded function $f:\ab\to\mb{C}$ with $\|f\|_{U^{k+1}}\geq \delta$, there exists $G/\Gamma\in \mc{N}_{k,R,\delta}$, a Lipschitz 1-bounded function $F:G/\Gamma\to\mb{C}$ of Lipschitz norm $O_{k,R,\delta}(1)$, and a polynomial map $g:\ab\to G/\Gamma$, such that $|\mb{E}_{x\in \ab} f(x)\overline{F(g(x))}| \geq \varepsilon$.
\end{theorem}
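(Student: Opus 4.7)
The plan is to start from the projected-nilsequence inverse theorem of \cite{CGSS-projnil}, and then to use the bounded rank hypothesis to remove the projection, arriving at an honest polynomial nilsequence on a filtered nilmanifold with connected, simply connected ambient group. The first step is to apply the main result of \cite{CGSS-projnil} to the $\delta$-Gowers-large function $f$ on $\ab$, obtaining a bounded-complexity polynomial map $g : \ab \to \mathsf{N}$ into a compact nilspace $\mathsf{N}$ (depending only on $k$ and $\delta$) and a bounded Lipschitz function $F$ on a suitable quotient of $\mathsf{N}$, such that $|\mathbb{E}_{x\in\ab} f(x)\overline{F(g(x))}|$ is at least some $\varepsilon_0 = \varepsilon_0(k,\delta) > 0$.

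The second step exploits the presentation $\ab \cong \mathbb{Z}^R/\Lambda$, which is valid since $\ab$ has rank at most $R$ (here $R$ is fixed but $\Lambda$ is allowed to vary). Lifting $g$ through the quotient map to a $\Lambda$-periodic polynomial map $\tilde g : \mathbb{Z}^R \to \mathsf{N}$ places us in an $R$-variable integer setting, where the classical theory of polynomial sequences on nilmanifolds (Mal'cev bases, Leibman-style factorization, and Host--Kra-type constructions) applies in its strongest form, since $R$ is bounded. Using these tools I would replace $\mathsf{N}$ by a genuine filtered nilmanifold $G/\Gamma$ with $G$ connected and simply connected, chosen from a finite list depending only on $k$, $R$, $\delta$, and adjust $F$ accordingly so as to retain the correlation with $\tilde f$ on a fundamental domain for $\Lambda$.

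The descent back to $\ab$ is then automatic from the $\Lambda$-periodicity of the lifted map, yielding the required polynomial map $\ab \to G/\Gamma$. The main obstacle lies in the second step: controlling the nilmanifold complexity \emph{independently} of the invariant factors $n_1,\dots,n_R$ of $\ab$, which may be arbitrarily large, and ensuring that the output collection $\mathcal{N}_{k,R,\delta}$ is truly finite. A careful bookkeeping of the interaction between $\Lambda$ (bounded number of generators, unbounded sizes) and the nilpotent filtration—likely via a Bohr-type or Mal'cev-coordinate argument—will be the crux, together with maintaining a Lipschitz norm of size $O_{k,R,\delta}(1)$ for the final function $F$ so that neither the polynomial coefficients nor the ambient geometry of $G/\Gamma$ depend on $\Lambda$.
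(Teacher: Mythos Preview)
Your proposal identifies the right high-level architecture---start from a general inverse theorem, exploit the bounded-rank presentation $\ab\cong\mathbb{Z}^R/\Lambda$, and descend---but the second step, which you correctly flag as the crux, is not substantiated, and the mechanism you gesture at is different from what the paper actually does. The paper does not start from the projected-nilsequence theorem of \cite{CGSS-projnil}; it starts from the balanced-morphism inverse theorem \cite[Theorem~5.2]{CSinverse}, and its first key contribution (Section~3) is a structural result showing that any sufficiently balanced morphism from a rank-$\leq R$ group into a \textsc{cfr} nilspace forces that nilspace to be \emph{quasitoral} (all structure groups above level $1$ are tori). This is proved by a counting argument on connected components of cube sets (Proposition~\ref{prop:lower-bnd-number-conneceted} versus Lemma~\ref{lem:few-reachable-comp}), and it is what guarantees that the nilspace decomposes into boundedly many toral pieces, giving correlation on a bounded-index subgroup $\ab_0\leq\ab$ with a genuine nilmanifold nilsequence. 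Nothing in your proposal supplies this reduction; invoking ``Mal'cev bases, Leibman-style factorization'' on the lift to $\mathbb{Z}^R$ does not by itself produce a connected simply-connected target of bounded complexity, since a general \textsc{cfr} nilspace (or a projected nilsequence target) need not be a nilmanifold at all.

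The paper's second key contribution (Section~4) is the extension of the resulting nilsequence from $\ab_0$ back to $\ab$. Here the paper shows by an explicit example (Example~\ref{ex:nonext}) that one \emph{cannot} in general extend the underlying polynomial map while keeping the same nilmanifold, so the ``descent back to $\ab$ is then automatic from $\Lambda$-periodicity'' is too optimistic: periodicity on $\ab_0$ does not give periodicity on $\ab$. The paper resolves this via a chain $\ab_0\leq\ab_1\leq\cdots\leq\ab_t=\ab$ of prime-index extensions, handling the non-split case by a multivariable analogue of the Green--Tao--Ziegler lifting \cite[Proposition~C.2]{GTZ}, which enlarges the nilmanifold in a controlled way at each step (Proposition~\ref{prop:exte-non-split} and Corollary~\ref{cor:non-split-case}). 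Your proposal would need to supply both of these ingredients---or genuine substitutes---to go through.
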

\noindent Our proof of Theorem \ref{thm:bndrankinv-intro} can be divided into two main steps. To summarize these steps in what follows, we will use basic terminology from nilspace theory, providing references to more detailed definitions and discussion in Section \ref{sec:prelims} below.

The first step in the proof consists in applying the general inverse theorem mentioned in the previous subsection, namely \cite[Theorem 5.2]{CSinverse}, and showing that, in the bounded-rank setting, the resulting nilspace is what we call a \emph{quasitoral} nilspace. To define these nilspaces, let us first recall that every $k$-step compact nilspace $\ns$ determines corresponding \emph{structure groups} $\ab_i(\ns)$, $i\in [k]$, which are compact abelian groups (see Definition \ref{def:structgps}). We say that $\ns$ is \emph{compact finite-rank} ( \textsc{cfr}) if the dual groups $\wh{\ab_i(\ns)}$ are all finitely generated (see Definition \ref{def:CFR}). In particular, a \textsc{cfr} nilspace $\ns$ is \emph{toral} if its structure groups are all tori, i.e.\ groups of the form $\mb{R}^n/\mb{Z}^n$ for some $n\in \mb{Z}_{\geq 0}$ (see \cite[\S 2.9.1]{Cand:Notes2}).

\begin{defn}[Quasitoral nilspaces]\label{def:quasitoral}
We say that a $k$-step \textsc{cfr} nilspace $\ns$ is \emph{quasitoral} if for every $i\in [2,k]$ the structure group $\ab_i(\ns)$ is a torus.
\end{defn}
\noindent We show that quasitoral nilspaces are disjoint unions of toral nilspaces (which are connected  nilmanifolds). In the case of the quasitoral nilspace $\ns$ obtained by applying \cite[Theorem 5.2]{CSinverse}, the  number of its toral-nilspace components depends only on the complexity of $\ns$, and is therefore adequately bounded. As a consequence, we deduce that on some bounded-index subgroup $\ab'$ of the original abelian group $\ab$, some shift of the initial function $f$ correlates non-trivially with a bounded-complexity nilsequence defined on $\ab'$. This completes this first step.

The second step consists in proving that the nilsequence on $\ab'$ can always be extended to a nilsequence on the full group $\ab$, while ensuring that the latter nilsequence still has bounded complexity. To solve this extension problem in general, it is necessary to modify the nilmanifold underlying the nilsequence; indeed we give an example of a degree-2 polynomial that cannot be extended to a larger group while preserving the polynomial's target nilspace; see Example \ref{ex:nonext}.

The paper has the following outline. In Section \ref{sec:prelims} we gather some basic tools and concepts from nilspace theory and higher-order Fourier analysis. In Section \ref{sec:quasitoral} we carry out the first main step described above, and in Section \ref{sec:extend} we solve the nilsequence-extension problem. The proof of Theorem \ref{thm:bndrankinv-intro} (including the control on the final correlation bound), is given in Section \ref{sec:mainproof}.

\smallskip
 
\noindent \textbf{Acknowledgments.} This work was supported by project \begin{small}PID2024-156180NB-I00\end{small} funded by Spain's \begin{small}MICIU/AEI\end{small}. The second-named author was supported by \begin{small}HORIZON-MSCA-2024-PF-01\end{small}, \begin{small}AlgHOF 101202161\end{small}, funded by the European Union.\footnote{Views and opinions expressed are those of the author(s) only and do not reflect those of the EU or the European Commission. Neither the EU nor the European Commission can be held responsible for them.} The third-named author was supported by the Hungarian Ministry of Innovation and Technology NRDI Office within the framework of the Artificial Intelligence National Laboratory Program (\begin{small}MILAB, RRF-2.3.1-21-2022-00004\end{small}).

\section{Background on nilspaces}\label{sec:prelims}

\noindent In this section we gather and recall the most important notions from nilspace theory that we will need. Let us first define nilspaces and some related concepts, following \cite[\S 1]{Cand:Notes1}. 

\begin{defn} For each positive integer $n$, the \emph{discrete $n$-cube} is the set $\db{n}:=\{0,1\}^n$, and $\db{0}:=\{0\}$. For $m,n\in\mb{Z}_{\ge 0}$, a map $\phi:\db{m}\to \db{n}$ is a \emph{discrete-cube morphism} if it extends to an affine homomorphism\footnote{An affine homomorphism $f:Z_1\to Z_2$ between abelian groups $Z_1$ and $Z_2$ is a function $f(z)=g(z)+t$, where $g:Z_1\to Z_2$ is a homomorphism and $t\in Z_2$.} $f:\mb{Z}^m\to \mb{Z}^n$.
\end{defn}

\begin{defn}
For $m,n\in \mb{Z}_{\ge 0}$, a \emph{face} of dimension $m$ in $\db{n}$ is a subset $F\subset \db{n}$ defined by fixing $n-m$ coordinates, that is, a set of the form $F=\{\underline{v}\in\db{n}: v(i)=t(i), \ i\in I\}$ for some $I\subset \{1,\ldots,n\}$ with $|I|=n-m$ and $t(i)\in \{0,1\}$ for all $i\in I$. A \emph{face map} $\phi:\db{k}\to \db{n}$ is an injective morphism of discrete cubes such that $\phi(\db{k})$ is a face.
\end{defn}

\begin{defn}\label{def:nilspace} A \emph{nilspace} is a set $\ns$ equipped with a set $\cu^n(\ns)\subset \ns^{\db{n}}$ for every $n\in \mb{Z}_{\geq 0}$, satisfying the following axioms:
\setlength{\leftmargini}{0.8cm}
\begin{enumerate}
    \item (Composition) For every discrete-cube morphism $\phi:\db{m}\to\db{n}$ and every $\q\in\cu^n(\ns)$, we have $\q\co\phi \in\cu^m(\ns)$.
    \item (Ergodicity) $\cu^1(\ns)=\ns^{\db{1}}$.
    \item (Corner completion) Let $\q':\db{n}\setminus \{1^n\}$ be a function such that for every face map $\phi:\db{n-1}\to\db{n}$  with $\phi(\db{n-1})\subset \db{n}\setminus\{1^n\}$ we have $\q'\co\phi\in\cu^{n-1}(\ns)$. Then there exists $\q\in\cu^n(\ns)$ such that $\q(v)=\q'(v)$ for all $v\in \db{n}\setminus\{1^n\}$.
\end{enumerate}
\noindent We refer to the elements of $\cu^n(\ns)$ as the \emph{$n$-cubes} on $\ns$, and to maps $\q'$ as in axiom $(iii)$ as \emph{$n$-corners} on $\ns$. A cube $\q$ satisfying the conclusion of axiom $(iii)$ is called a \emph{completion} of the $n$-corner $\q'$. If for $n=k+1$ every $n$-corner on $\ns$ has a unique completion, then we say that $\ns$ is a \emph{$k$-step} nilspace. 

Finally, we say that $\ns$ is a \emph{compact nilspace} if it has a compact second-countable Hausdorff topology and for every $n\in\mb{N}$ the set $\cu^{n}(\ns)$ is closed in the product topology on $\ns^{\db{n}}$.
\end{defn}
The following notion is crucial in nilspace theory, as it defines a maximal equivalence relation such that the corresponding quotient space is a $k$-step nilspace.
\begin{defn}
Let $k\in\mb{N}$ and let $\ns$ be a nilspace. We define the relation $\sim_k$ on $\ns$ by declaring that $x\sim_k y$ if and only if $\exists\, \q_0,\q_1\in \cu^{k+1}(\ns)$ such that $\q_0(0^{k+1})=x,\ \q_1(0^{k+1})=y$ and $\q_0(v)=\q_1(v)\ \forall\, v\neq 0^{k+1}$. Let $\pi_k:\ns\to\ns/\sim_k$ be the quotient map. Then $\ns/\sim_k$ together with the cubes $\cu^n(\ns/\sim_k):=\pi_k^{\db{n}}(\cu^n(\ns))$ is a $k$-step nilspace, called the (canonical) \emph{$k$-step factor} of $\ns$.
\end{defn}
\noindent Using these canonical factors, a general $k$-step nilspace can be expressed as an iterated \emph{abelian bundle} (see \cite[Definition 3.2.17 and Theorem 3.2.19]{Cand:Notes1}), where the $i$-th factor is such a bundle over the $(i-1)$-th factor, with fibers being principal homogeneous spaces of an abelian group denoted $\ab_i(\ns)$. These abelian groups play a crucial role, so we recall their definition more formally.

\begin{defn}\label{def:structgps}
Let $\ns$ be a $k$-step nilspace $\ns$. For $i\in [k]$ the \emph{$i$-th structure group} $\ab_i(\ns)$ of $\ns$ is an abelian group such that the $i$-th nilspace factor $\ns_i$ is an \emph{abelian $\ab_i$-bundle} over $\ns_{i-1}$ with projection map the nilspace factor map $\ns_i\to\ns_{i-1}$ (see \cite[\S 3.2.3]{Cand:Notes1}). If we further assume that $\ns$ is a compact nilspace, then $\ab_i$ becomes a \emph{compact} abelian group \cite[\S 2.1.1]{Cand:Notes2} (with the relative topology when $\ab_i$ is identified with any fiber of the bundle $\ns_i$).    
\end{defn}

To study inverse theorems for the Gowers norms, we can usually focus on a class of nilspaces called \emph{compact finite-rank nilspaces} (or \textsc{cfr} nilspaces for short) thanks to \cite[Theorem 1.5]{CSinverse}. Let us recall their definition.

\begin{defn}\label{def:CFR}
A compact $k$-step nilspace $\ns$ is of \emph{finite rank} if for every $i\in[k]$ the structure group $\ab_i(\ns)$ is a compact abelian \emph{Lie} group (equivalently, the dual groups $\wh{\ab_i(\ns)}$, $i\in [k]$ are all finitely generated).
\end{defn}

In connection with the Jamneshan--Tao conjecture \cite[Conjecture 1.11]{J&T}, we are interested in studying \emph{nilmanifolds} and how they are related with \textsc{cfr} nilspaces that appear in the inverse theorem for abelian groups of bounded rank. First we need to recall the concept of a \emph{filtration}.

\begin{defn}
A \emph{filtration} on a group $G$ is a sequence $G_\bullet = (G_i)_{i=0}^\infty$ of subgroups of $G$ with $G = G_0 = G_1 \geq G_2 \geq \cdots $ and such that the commutator subgroup $[G_i, G_j]$ is included in $G_{i+j}$ for all $i,j \ge 0$.  We refer to $(G, G_\bullet)$ as a \emph{filtered group}. If $G_{k+1} = \{\mathrm{id}_G\}$ we say that the filtered group $(G, G_\bullet)$ is \emph{of degree $k$}. If $G_\bullet$ satisfies the above assumptions with $G_0=G_1$ replaced by $G_0\supseteq G_1$, then we say that $G_\bullet$ is a \emph{prefiltration}.
\end{defn}

\begin{defn}\label{def:nilmanifold}
A \emph{nilmanifold} is a quotient space $G/\Gamma$ where $G$ is a nilpotent Lie group\footnote{Note that we do \emph{not} require $G$ to be connected.} and $\Gamma$ is a discrete, cocompact subgroup of $G$. If $G_\bullet$ is a filtration on $G$ of degree at most $k$, with each $G_i$ a closed subgroup of $G$ and with each subgroup $\Gamma \cap G_i$ cocompact in $G_i$, then we call $(G/\Gamma, G_\bullet)$ a \emph{filtered nilmanifold of degree $k$}. We also assume that any nilmanifold is equipped with an arbitrary smooth Riemannian metric. Note that any two smooth Riemannian metrics on a compact nilmanifold are equivalent.
\end{defn}
\noindent Any degree-$k$ nilmanifold $G/\Gamma$ is a compact $k$-step nilspace  by \cite[Proposition 1.1.2]{Cand:Notes2}.\footnote{The proof of this result assumes that $G$ is connected, but the same proof works without this assumption.} To make this relation more explicit, let us recall the notions of \emph{Host-Kra cubes} on a filtered group, and the related concepts of group nilspace and coset nilspace.
\begin{defn}\label{def:hk-cubes}
Let $(G, G_\bullet)$ be a filtered group. Given a face $F$ of $\db{n}$ and $g\in G$, let $g^{F}\in G^{\{db{n}}$ be defined by $g^{F}(v)=g^{1_F(v)}$. The set $G$ together with the cube sets $\cu^n(G):=\{\,g^{F}:F\text{ face in }\db{n},g\in G_{\mathrm{codim}(F)}\,\}\le G^{\db{n}}$ for $n\ge 0$ is a nilspace, called the \emph{group nilspace} associated with $(G,G_\bullet)$. This nilspace is of step $k$ if and only if $G_\bullet$ has degree $k$. Finally, for any subgroup $\Gamma\leq G$ the quotient set $G/\Gamma$ equipped with the cubes $\cu^n(G/\Gamma):=\cu^n(G_\bullet)\Gamma^{\db{n}}$ is a $k$-step nilspace called a \emph{coset} nilspace.\footnote{This is proved in \cite[\S 2.3]{Cand:Notes1}.}
\end{defn}

\begin{remark}\label{rem:toral&metric}
As explained in \cite[Remark 2.9.19]{Cand:Notes2}, any toral nilspace $\ns$ is isomorphic (as a compact nilspace) to a filtered nilmanifold $(G/\Gamma,G_\bullet)$ (equipped with the associated Host--Kra cubes) where we may assume that each Lie group $G_i$ in $G_\bullet$ is connected and simply-connected. 
\end{remark}
\begin{defn}\label{def:Dkab}
Given an abelian group $G$, we denote by $\mc{D}_k(G)$ the $k$-step group nilspace associated with the filtration $G_\bullet$ where $G_i=G$ for $i\le k$ and $G_{i}=\{\id\}$ for $i>k$. We call this nilspace a \emph{degree-$k$ abelian group}.
\end{defn}

Let us recall what are the morphisms in the category of nilspaces.

\begin{defn}\label{def:nsmorphisms} Let $\ns,\nss$ be nilspaces. A map $\varphi:\ns\to\nss$ is a \emph{morphism} if for every $n\in\mb{N}$ and every $\q\in \cu^n(\ns)$ we have that $\varphi\co\q\in\cu^n( \nss)$. If both $\ns$ and $\nss$ are compact nilspaces, then we require $\varphi$ to be continuous. The set of morphisms from $\ns$ to $\nss$ is denoted by $\hom(\ns,\nss)$. In particular, polynomial maps from a filtered group $(H,H_\bullet)$ to another filter group $(G,G_\bullet)$ are precisely the morphisms between the associated group nilspaces (see  \cite[Theorem 2.2.14]{Cand:Notes1}).
\end{defn}
\noindent By a ``polynomial map" from an abelian group $\ab$ into a filtered nilmanifold $G/\Gamma$ (e.g.\ the polynomial map $g$ in Theorem \ref{thm:bndrankinv-intro}), we always mean a morphism from $\mc{D}_1(\ab)$ to the coset nilspace $G/\Gamma$.

For compact nilspaces, we can define a family of probability measures $(\mu_{\cu^n(\ns)})_{n\in \mb{N}}$ on the cube sets $\cu^n(\ns)$. The measure $\mu_{\cu^n(\ns)}$ is called the \emph{Haar measure} on $\cu^n(\ns)$, see \cite[\S 2.2.2]{Cand:Notes2}.  This notion is important because the regularity lemma for the Gowers norms \cite[Theorem 1.5]{CSinverse} gives us a morphism which \emph{approximately preserves these Haar measures}. To make this more precise, we need to introduce the concept of \emph{balance}, and this requires a couple of additional definitions. First, for a $k$-step compact nilspace $\ns$, and $n\in \mb{N}$, we equip the space of probablity measures on $\cu^n(\ns)$, which we denote by $\mc{P}(\cu^n(\ns))$, with the weak topology. It follows from \cite[\S 5]{CSinverse} and the Banach-Alaoglu theorem that this is a compact metric space. Now we can introduce the concept of a \emph{balanced} morphism (simplified for our purposes).

\begin{defn}[Balanced morphism; see Definition 5.1 in \cite{CSinverse}]
Let $\ns$ be a $k$-step compact nilspace. For each integer $n\geq 0$, fix a metric $d_n$ on the space $\mc{P}(\cu^n(\ns))$. Let $\ab$ be a finite abelian group and $\varphi:\mc{D}_1(\ab)\to \ns$ be a morphism. Then for $b>0$ we say that $\varphi$ is $b$-\emph{balanced} if for every $n\le 1/b$ we have that $d_n(\mu_{\cu^n(\mc{D}_1(\ab))}\co (\varphi^{\db{n}})^{-1},\mu_{\cu^n(\ns)})<b$.
\end{defn}

\section{The role of quasitoral nilspaces in the bounded-rank setting}\label{sec:quasitoral}
\noindent In this section we prove that, when we apply the general inverse theorem \cite[Theorem 5.2]{CSinverse} to abelian groups with a fix bound $R$ on their rank, the obtained \textsc{cfr} nilspaces (underlying the nilspace polynomial)  can be taken to be quasitoral.

To begin with, let us give the following characterization of quasitoral nilspaces.

\begin{lemma}\label{lem:spli-dep-on-1-factor}
Let $\ns$ be a $k$-step \textsc{cfr} nilspace for some $k\geq 1$. Let $\pi_1:\ns\to\ns_1\cong \mb{T}^n\times H$ be the projection to the canonical 1-step factor $\ns_1$, where $H$ is a finite abelian group and $n\in \mb{Z}_{\geq 0}$, and let $q:\ns_1\to H$ be the projection homomorphism. Then, for every $h\in H$, the preimage $(q\co \pi_1)^{-1}(h)$ is a subnilspace\footnote{By definition, the cubes on this subnilspace are the cubes on $\ns$ that take values in the subnilspace.} of $\ns$ and its structure groups are $\ab_i((q\co\pi_1)^{-1}(h))=\ab_i(\ns)$ for $i\ge 2$ and $\ab_1((q\co\pi_1)^{-1}(h))=\mb{T}^n$. In particular,  if for \emph{any} $h\in H$ the nilspace $(q\co \pi_1)^{-1}(h)$ is toral, then $\ns$ is quasitoral, and conversely, if $\ns$ is quasitoral then for \emph{every} $h\in H$ the nilspace $(q\co \pi_1)^{-1}(h)$ is toral.
\end{lemma}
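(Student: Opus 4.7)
The plan is to treat the composite $q\co\pi_1:\ns\to\mc{D}_1(H)$ as a morphism into a $1$-step compact nilspace, deduce that each fiber $Y_h:=(q\co\pi_1)^{-1}(h)$ is a subnilspace, and then track the canonical factor tower of $\ns$ restricted to $Y_h$ to compute its structure groups. For the subnilspace property, composition is inherited, and ergodicity is immediate since any map $\db{1}\to Y_h$ is already a cube of $\ns$ with both values in $Y_h$. For corner completion, given an $n$-corner on $Y_h$, I would complete it to some $\q\in\cu^n(\ns)$ and push it forward by $q\co\pi_1$: the result is an $n$-cube on $\mc{D}_1(H)$ taking value $h$ at every vertex of $\db{n}\setminus\{1^n\}$. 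Since $\mc{D}_1(H)$ is $1$-step, corner completion there is unique, and the constant cube at $h$ is a valid completion, so $(q\co\pi_1)(\q(1^n))=h$, i.e.\ $\q(1^n)\in Y_h$.

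For the structure groups, let $\rho_{i,1}:\ns_i\to\ns_1$ denote the canonical factor map and observe that $(Y_h)_i:=\pi_i(Y_h)=\rho_{i,1}^{-1}(\mb{T}^n\times\{h\})$, using surjectivity of the factor maps $\pi_i:\ns\to\ns_i$. At $i=1$ this gives $(Y_h)_1=\mb{T}^n\times\{h\}$, which as a subnilspace of $\ns_1\cong\mc{D}_1(\mb{T}^n\times H)$ is (after translation) isomorphic to $\mc{D}_1(\mb{T}^n)$, so $\ab_1(Y_h)=\mb{T}^n$. For $i\geq 2$, the factor map $\ns_i\to\ns_{i-1}$ is a principal $\ab_i(\ns)$-bundle, and the $\ab_i(\ns)$-action on $\ns_i$ is trivial on every lower factor $\ns_j$ with $j<i$, hence in particular preserves the projection down to $\ns_1$. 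Thus the action restricts to $(Y_h)_i$, giving a principal $\ab_i(\ns)$-bundle structure on $(Y_h)_i\to(Y_h)_{i-1}$, whence $\ab_i(Y_h)=\ab_i(\ns)$.

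Granted the structure-group identification, the remaining ``in particular'' claim is immediate: $Y_h$ is toral iff every $\ab_i(Y_h)$ is a torus; $\ab_1(Y_h)=\mb{T}^n$ is always a torus, and $\ab_i(Y_h)=\ab_i(\ns)$ for $i\geq 2$, so the condition is equivalent to $\ns$ being quasitoral in the sense of Definition \ref{def:quasitoral}. Note that the condition no longer depends on $h$, which is why the statement can switch from the existential ``any $h$'' to the universal ``every $h$''. I expect the main obstacle to be the clean corner-completion step of the subnilspace verification — specifically the invocation of uniqueness of corner completion in $\mc{D}_1(H)$ — together with the bundle-restriction argument for $i\geq 2$, which relies on the fact that the $\ab_i(\ns)$-action on $\ns_i$ is trivial on the $1$-step factor.
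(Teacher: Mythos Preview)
Your proposal is correct. The paper's own proof consists of just two citations: \cite[Lemma 3.2]{CGSS} for the fact that the fiber is a subnilspace, and \cite[Lemma 3.3.6]{Cand:Notes1} for the computation of the structure groups. Your argument is essentially a direct unpacking of the content of those lemmas in this special case: the corner-completion step (pushing a completion in $\ns$ forward to $\mc{D}_1(H)$ and invoking unique completion in a $1$-step nilspace for $n\geq 2$) is precisely how one proves that point-preimages under fibrations are subnilspaces, and your bundle-restriction reasoning is the standard way to identify the structure groups of such a fiber. The one step you leave implicit is the identification of $\pi_i(Y_h)$ with the intrinsic $i$-th factor of $Y_h$; this requires checking that $\sim_i$ on $Y_h$ coincides with the restriction of $\sim_i$ on $\ns$, which holds because any $x,y\in Y_h$ with $x\sim_i y$ in $\ns$ are witnessed by the cube equal to $x$ at $0^{i+1}$ and $y$ elsewhere, all of whose values lie in $Y_h$. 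With that point granted, your route and the paper's differ only in that you reprove rather than cite.
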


\begin{proof}
For any $h\in H$, by \cite[Lemma 3.2]{CGSS} we have that $(q\co\pi_1)^{-1}(h)$ is a subnilspace of $\ns$ . Moreover, by \cite[Lemma 3.3.6]{Cand:Notes1}, we have that the structure groups of $(q\co\pi_1)^{-1}(h)$ are $\ab_i((q\co\pi_1)^{-1}(h))=\ab_i(\ns)$ for $i\ge 2$ and $\ab_1((q\co\pi_1)^{-1}(h))=\mb{T}^n$.
\end{proof}

We will use the above characterization to prove the following main result of this section.

\begin{theorem}\label{thm:fin-rank-implies-quasitoral}
Let $k,R\in \mb{N}$ and let $\ns$ be a $k$-step \textsc{cfr} nilspace. Then there exists $b=b(\ns,k,R)>0$ such that the following holds. If there exists a finite abelian group $\ab$ of rank at most $R$ and a $b$-balanced morphism $\varphi:\mc{D}_1(\ab)\to \ns$, then $\ns$ is quasitoral.
\end{theorem}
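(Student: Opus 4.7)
The plan is to argue by induction on the step $k$, with the $k=1$ case being vacuous since the quasitoral property imposes no constraint on $\ab_i(\ns)$ for $i\ge 2$. For the inductive step, composing $\varphi$ with the canonical projection $\pi_{k-1}\colon \ns\to \ns_{k-1}$ gives a morphism $\mc{D}_1(\ab)\to \ns_{k-1}$ which is balanced with parameter depending only on $b$ and on the Lipschitz data of $\pi_{k-1}$ (a quantity controlled by $\ns$), because $\pi_{k-1}$ is a continuous nilspace morphism pushing cube Haar measures to cube Haar measures. By the inductive hypothesis $\ns_{k-1}$ is quasitoral, so $\ab_i(\ns)=\ab_i(\ns_{k-1})$ is a torus for each $2\le i\le k-1$. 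It remains to show that $\ab_k(\ns)$ is also a torus.

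Suppose for contradiction $\ab_k(\ns)=\mb{T}^m\times F$ with $F$ non-trivial finite. By Lemma~\ref{lem:spli-dep-on-1-factor} it suffices to show some fiber $(q\co\pi_1)^{-1}(h)$ is toral. The composition $\psi:=q\co\pi_1\co\varphi\colon \ab\to H$ is, after a translation, a group homomorphism; for $b$ smaller than $1/|H|$, level-$1$ balance forces $\psi$ to be surjective, so $\ab':=\ker\psi$ is a subgroup of $\ab$ of index $|H|$, hence of rank at most $R$. The restriction $\varphi|_{\ab'}\colon \mc{D}_1(\ab')\to \ns^\star:=(q\co\pi_1)^{-1}(0)$ is balanced, and by Lemma~\ref{lem:spli-dep-on-1-factor} the target $\ns^\star$ has $\ab_1=\mb{T}^n$ (connected), tori in levels $2$ through $k-1$, and top group $\mb{T}^m\times F$. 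Quotienting the top structure group by $\mb{T}^m$ (a standard nilspace construction that preserves balance) produces a balanced morphism $\bar\varphi\colon \mc{D}_1(\ab')\to \tilde\ns^\star$ into a $k$-step \textsc{cfr} nilspace with $\ab_k(\tilde\ns^\star)=F$ finite non-trivial and all other structure groups connected tori.

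The crux is to derive a contradiction from the existence of $\bar\varphi$ for arbitrarily small $b$. The key observation is that every morphism $\mc{D}_1(\ab')\to \tilde\ns^\star$ from a rank-$R$ finite abelian group factors through a quotient $\bar\ab'$ of $\ab'$ of order at most $N_0 = N_0(\tilde\ns^\star, R)$: indeed, degree-$\le k$ polynomial maps from $\ab'=\prod_{i=1}^r \mb{Z}/m_i$ (with $r\le R$) are $\mb{Z}$-linear combinations of at most $\binom{R+k}{k}$ binomial monomials $\prod_i\binom{x_i}{j_i}$, each factoring through a cyclic quotient of order at most $|F|\cdot k!$. Since quotient maps of abelian groups push uniform measures to uniform ones, the balance of $\bar\varphi$ equals that of its induced morphism from $\mc{D}_1(\bar\ab')$ with $|\bar\ab'|\le N_0$. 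The contradiction is then immediate when $\tilde\ns^\star$ is disconnected: its totally disconnected quotient $\pi_0(\tilde\ns^\star)$ is a non-trivial $k$-step nilspace of the form $\mc{D}_k(F')$, and $|\cu^n(\mc{D}_k(F'))|\ge |F'|^{\binom{n}{k}}$ grows super-exponentially in $n$ (as $k\ge 2$), so for $n$ large enough the pushforward from a source of bounded size misses at least one cube, yielding a lower bound on the total-variation balance depending only on $\tilde\ns^\star$ and $R$.

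The main obstacle is the sub-case where $\tilde\ns^\star$ is connected (e.g., the M\"obius-like example with $\ab_1=\mb{T}$, $\ab_2=\mb{Z}/2$, and non-trivial monodromy), in which $\pi_0(\tilde\ns^\star)$ is trivial and the cardinality argument fails. Here I would take a non-trivial finite-order character $\chi\colon F\to \mb{T}$, lift it to a degree-$k$ nilcharacter $\Psi_\chi\colon \tilde\ns^\star\to \mb{C}$, and use a Gauss-sum-type calculation to show that for any candidate $\bar\varphi$ the Gowers $U^{k+1}$ norm of the pullback $\Psi_\chi\co\bar\varphi$ on $\bar\ab'$ is bounded away from $1$ by a constant depending only on $\tilde\ns^\star$ and $R$; this would contradict the perfect-balance implication $\|\Psi_\chi\co\bar\varphi\|_{U^{k+1}}=\|\Psi_\chi\|_{U^{k+1}(\tilde\ns^\star)}=1$.
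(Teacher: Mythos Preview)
Your overall architecture matches the paper's: induction on $k$ via $\pi_{k-1}$, restriction to the fiber $(q\co\pi_1)^{-1}(0_H)$ to reduce to a connected $1$-step base (this is the paper's Proposition~\ref{prop:bal-implies-restric-bal}), quotienting the top structure group by its toral part, and then a counting argument pitting the number of ``pieces'' against how many a rank-$R$ source can reach. The gap is exactly where you place it, in the connected case, but the nilcharacter route you sketch is both unfinished and unnecessary. Note also that a continuous \emph{scalar} $\Psi_\chi$ satisfying $\Psi_\chi(x+a)=\chi(a)\Psi_\chi(x)$ on a connected $\tilde\ns^\star$ with non-trivial monodromy must vanish somewhere, so $\|\Psi_\chi\|_{U^{k+1}}<1$ and your ``perfect-balance implication'' already fails; repairing this with vector-valued nilcharacters and then carrying out the unspecified Gauss-sum estimate is not a proof.

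The paper avoids the obstacle by counting connected components of the \emph{cube sets} $\cu^n(\tilde\ns^\star)$ rather than of $\tilde\ns^\star$. Even when $\tilde\ns^\star$ is connected, $\cu^n(\tilde\ns^\star)$ has at least $|F|^{\binom{n}{k}}$ components once $\ns_{k-1}$ is toral and $\ab_k=F$ is discrete (Proposition~\ref{prop:lower-bnd-number-conneceted}): the Gray-code map $\sigma_k$ descends to a continuous map $\cu^k(\tilde\ns^\star)\to F$, so distinct $F$-values label distinct components of $\cu^k$, and these proliferate over the $\binom{n}{k}$ independent $k$-faces of $\db{n}$. On the other side (Lemmas~\ref{lem:fact-of-morphisms} and~\ref{lem:few-reachable-comp}), any morphism from $\mc{D}_1(\mb{Z}^r)$ decomposes as $g\Gamma+f$ with $g$ valued in $G^0$ and $f\in\hom(\mc{D}_1(\mb{Z}^r),\mc{D}_k(F))$; since $g^{\db{n}}$ stays in a single component of $\cu^n$, only $f$ matters for the component count, and $f$ is $|F|\,k!$-periodic exactly as you argued. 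Hence the image meets at most $(|F|\,k!)^{R(n+1)}$ components, contradicting (for $k\ge 2$ and $n$ large) the requirement from balance (Lemma~\ref{lem:bal-implies-surj}) that every component be hit. This is precisely your counting argument, transplanted from $\pi_0(\tilde\ns^\star)$ to $\pi_0\big(\cu^n(\tilde\ns^\star)\big)$; with that single change the M\"obius obstacle disappears and no nilcharacters are needed.
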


We reduce the proof of this theorem to proving the following two results.

\begin{proposition}\label{prop:fin-rank-reduction}
Let $k,R\in \mb{N}$ and let $\ns$ be a $k$-step \textsc{cfr} nilspace such that $\ns_1$ is connected. Then there exists $b=b(\ns,k,R)>0$ such that the following holds. If there exists a finite abelian group $\ab$ of rank at most $R$ and a $b$-balanced morphism $\varphi:\mc{D}_1(\ab)\to \ns$, then $\ns$ is toral.
\end{proposition}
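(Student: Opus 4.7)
The plan is to argue by contradiction: suppose $\ns$ is not toral, and derive a positive lower bound on any admissible $b$. Since $\ns_1$ is a compact connected finite-rank $1$-step nilspace, it is a torus, so $\ab_1(\ns)$ is a torus. Let $i\in[2,k]$ be minimal with $\ab_i(\ns)$ not a torus. Passing to the $i$-step factor of $\ns$ (which preserves balance via pushforward under the canonical factor map) we may reduce to $i=k$. The compact abelian Lie group $\ab_k$ then has a nontrivial finite quotient: quotienting by its identity component and then by a maximal subgroup yields a cyclic quotient $\mb{Z}/p$ for some prime $p$. Quotienting the top structure group of $\ns$ accordingly produces a factor $\ns'$ of $\ns$ that is a $k$-step \textsc{cfr} nilspace whose structure groups $\ab_1,\dots,\ab_{k-1}$ are tori and whose top structure group is $\mb{Z}/p$; composing $\varphi$ with the factor map $\ns\to\ns'$ yields a morphism $\varphi'\colon\mc{D}_1(\ab)\to\ns'$ which inherits $b$-balance up to a multiplicative constant depending only on $\ns$.

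The main task is then to show that no sufficiently balanced morphism into such an $\ns'$ can exist when $\ab$ has rank at most $R$. I would separate the data of $\varphi'$ into a polynomial map $\psi\colon\mc{D}_1(\ab)\to\ns'_{k-1}$ into the connected toral base of step $k-1$, together with a degree-$k$ polynomial $\sigma\colon\ab\to\mb{Z}/p$ recording the top-level coordinate, via the Host--Kra cube description from Definition \ref{def:hk-cubes} adapted to the $\mb{Z}/p$-bundle $\ns'\to\ns'_{k-1}$. Balance of $\varphi'$ then says simultaneously that $\psi$ equidistributes in $\ns'_{k-1}$ and that $\sigma$ behaves uniformly fiberwise over $\psi$.

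The decisive step invokes the rank bound via a vertical character argument. Choose any nontrivial character $\chi\colon\mb{Z}/p\to\mb{T}$ and build a unit-modulus degree-$k$ nilcharacter $\eta\colon\ns'\to\mb{C}$ whose fiberwise transformation is $\chi$. Then $\eta\circ\varphi'\colon\ab\to\mb{C}$ factors as a continuous toral nilsequence arising from $\psi$ multiplied by $\chi\circ\sigma$, and balance of $\varphi'$ at the $\cu^{k+1}$-level forces the $U^{k+1}(\ab)$-norm of $\eta\circ\varphi'$ to be bounded below by a positive constant depending only on $\ns$. On the other hand, the cocycle equation of the $\mb{Z}/p$-bundle $\ns'\to\ns'_{k-1}$ couples $\sigma$ to $\psi$: given $\psi$, the polynomial $\sigma$ is essentially determined up to coboundaries. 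The rank-$\leq R$ hypothesis bounds the space of admissible degree-$k$ polynomial phases $\ab\to\mb{Z}/p$ in complexity depending only on $R$, $k$, $p$, whereas the connectedness of $\ns'_{k-1}$ demands that $\psi$ use the ``continuous'' directions of $\ab$ to equidistribute in a torus. A counting argument contrasting the bounded discrete space of possible cocycles with the required continuous equidistribution then rules out the required joint balance for $b$ sufficiently small.

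The main obstacle I anticipate is the precise execution of this last step. Two sub-issues stand out: first, producing the vertical nilcharacter $\eta$ and converting balance of $\varphi'$ into the quantitative Gowers-norm lower bound on $\eta\circ\varphi'$, which should follow from standard Fourier-analytic constructions on compact nilspaces as in \cite{CSinverse}; and second, and more substantively, making precise the incompatibility between the discrete equidistribution required of $\sigma$ and the continuous equidistribution required of $\psi$ under the cocycle coupling. For the latter I would invoke the classification of higher-order polynomial phases on finite abelian groups from \cite{CGSS-projnil}, combined with bundle-cohomological arguments to constrain $\sigma$ given $\psi$, and exploit the rank bound in an explicit dimension/counting argument.
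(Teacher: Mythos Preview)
Your reduction is essentially the paper's: use induction on $k$ (via Lemma~\ref{lem:cont-of-balanced}, balance pushed along the factor map $\ns\to\ns_{k-1}$) to assume $\ns_{k-1}$ is toral, then quotient the top structure group by its identity component to obtain $\ns'$ with $\ab_k(\ns')$ finite and nontrivial. Your further reduction to $\ab_k(\ns')\cong\mb{Z}/p$ is harmless but unnecessary.

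The genuine gap is your ``decisive step''. The vertical-character route does not deliver a contradiction. Even granting the Gowers-norm lower bound $\|\eta\circ\varphi'\|_{U^{k+1}}\gg_\ns 1$, this says only that $\eta\circ\varphi'$ is close to a degree-$k$ phase polynomial on $\ab$ --- which it manifestly is, so nothing is contradicted. The subsequent ``counting argument contrasting the bounded discrete space of possible cocycles with the required continuous equidistribution'' is not an argument; it is a hope, and you yourself flag it as the main obstacle. Two further problems: your splitting $\varphi'=(\psi,\sigma)$ with a globally defined $\sigma:\ab\to\mb{Z}/p$ presupposes that the $\mb{Z}/p$-bundle $\ns'\to\ns'_{k-1}$ is trivial, which it need not be; and working only at the $\cu^{k+1}$-level (implicit in the $U^{k+1}$-norm) is too coarse to detect the rank constraint.

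The paper's argument is a direct connected-component count in the cube sets $\cu^n(\ns')$ for \emph{large} $n$. It proves three things: (a) $\cu^n(\ns')$ has at least $|\ab_k(\ns')|^{\binom{n}{k}}$ connected components (via the Gray-code map, Proposition~\ref{prop:lower-bnd-number-conneceted}); (b) sufficient balance forces $(\varphi')^{\db{n}}$ to hit every component (Lemma~\ref{lem:bal-implies-surj}); and (c) since $\ab$ has rank $\leq R$, lifting $\varphi'$ through $\mb{Z}^R\twoheadrightarrow\ab$ and Taylor-expanding shows the image of $(\varphi')^{\db{n}}$ meets at most $(|\ab_k(\ns')|\,k!)^{R(n+1)}$ components (Lemma~\ref{lem:few-reachable-comp}). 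For $n$ large enough that $\binom{n}{k}$ outgrows $R(n+1)$, (a)--(c) are jointly impossible. The essential point you are missing is that the rank bound becomes visible only when one compares the polynomial-in-$n$ count of reachable components against the $n^k$-exponential count of all components, and this requires working in $\cu^n$ for $n\gg k$, not at the Gowers-norm level.
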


\begin{proposition}\label{prop:bal-implies-restric-bal}
Let $k,R\in \mb{N}$, let $b'\in \mb{R}_{>0}$, and let $\ns$ be a $k$-step \textsc{cfr} nilspace. Then there exists $b=b(\ns,k,R,b')>0$ such that the following holds. Let $\pi_1:\ns\to\ns_1\cong \mb{T}^n\times H$, where $H$ is a finite abelian group and $n\in \mb{N}$, and let $q:\ns_1\to H$ be the projection homomorphism. If there exists a finite abelian group $\ab$ of rank at most $R$ and a $b$-balanced morphism $\varphi:\mc{D}_1(\ab)\to \ns$, then there exists a  subgroup\footnote{In particular $\ab'$ is also finite abelian of rank at most $R$.} $\ab'\le \ab$ and a $b'$-balanced morphism $\varphi':\mc{D}_1(\ab')\to (q\co \pi_1)^{-1}(0_H)$.
\end{proposition}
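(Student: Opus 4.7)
\smallskip
\noindent \textbf{Plan.} The strategy is to transfer the balanced morphism $\varphi$ to a translate defined on the kernel of the induced map to $H$, and then verify that the transferred map is balanced by a conditioning argument on clopen events.

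\smallskip
\noindent \emph{Step 1: Reduction to a surjective affine homomorphism into $H$.} Consider $\psi := q\co\pi_1\co\varphi$, which, being a composition of morphisms ending at the 1-step abelian nilspace $\mc{D}_1(H)$, is an affine homomorphism $\psi(x)=\psi_0(x)+c$ with $\psi_0:\ab\to H$ a group homomorphism and $c\in H$. Using $b$-balancedness at cube-dimension $n=0$ and continuity of the pushforward under $q\co\pi_1$, the law $\psi_*\mu_{\ab}$ (which is uniform on the coset $c+\psi_0(\ab)$) is $O(b)$-close to Haar on $H$ (which is uniform on $H$). For finite discrete $H$, weak closeness is equivalent to total-variation closeness, and the TV distance between uniform on a strict coset and uniform on $H$ is at least $1/|H|$. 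Thus for $b$ sufficiently small in terms of $|H|$, the map $\psi_0$ is surjective.

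\smallskip
\noindent \emph{Step 2: Construction of $\varphi'$.} Set $\ab':=\ker(\psi_0)\le \ab$ and choose $x_0\in\ab$ with $\psi(x_0)=0_H$, which exists by surjectivity of $\psi_0$. Define $\varphi':\ab'\to\ns$ by $\varphi'(y):=\varphi(y+x_0)$. The translation $T_{x_0}:\mc{D}_1(\ab')\to\mc{D}_1(\ab),\ y\mapsto y+x_0$ is an affine homomorphism, hence a nilspace morphism, so $\varphi'=\varphi\co T_{x_0}$ is a morphism. Moreover, for any $y\in\ab'$ we have $\psi(y+x_0)=\psi_0(y)+\psi(x_0)=0_H$, so $\varphi'$ takes values in the subnilspace $\nss:=(q\co\pi_1)^{-1}(0_H)$.

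\smallskip
\noindent \emph{Step 3: Balancedness via clopen conditioning.} For each $n$, let $B_n\subset\cu^n(\ns)$ be the (clopen) set of $n$-cubes whose image under $(q\co\pi_1)^{\db{n}}$ is the trivial cube $0^{\db{n}}\in\cu^n(H)$, and let $A_n\subset\cu^n(\mc{D}_1(\ab))$ be the preimage of $B_n$ under $\varphi^{\db{n}}$; a short direct computation shows $A_n$ is precisely the set of cubes taking values in the coset $x_0+\ab'$, and that both $B_n$ and $A_n$ have measure exactly $|H|^{-(n+1)}$ (since the projection of Haar to $\cu^n(\mc{D}_1(H))$ is uniform on $|H|^{n+1}$ elements). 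The translation $T_{x_0}^{\db{n}}$ gives a measure-preserving identification between $\cu^n(\mc{D}_1(\ab'))$ (with Haar) and $A_n$ (with $\mu_{\cu^n(\ab)}|_{A_n}/|H|^{-(n+1)}$), and via the bundle structure of the Haar system (as in \cite[\S 2.2.2]{Cand:Notes2}), combined with the structure-group description in Lemma \ref{lem:spli-dep-on-1-factor}, the Haar measure $\mu_{\cu^n(\nss)}$ coincides with $\mu_{\cu^n(\ns)}|_{B_n}/|H|^{-(n+1)}$. Restriction-normalization on a clopen set of fixed positive measure is a continuous operation on $\mc{P}(\cu^n(\ns))$, so the inequality $d_n(\mu_{\cu^n(\ab)}\co(\varphi^{\db{n}})^{-1},\mu_{\cu^n(\ns)})<b$ transfers to a bound of the form $C(n,|H|,\ns)\,b$ on the corresponding distance between the conditioned pushforward and $\mu_{\cu^n(\nss)}$. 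Choosing $b$ so that $C(n,|H|,\ns)\,b<b'$ for all $n\le 1/b'$ (which is possible since $|H|$ and $\ns$ are fixed) completes the proof.

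\smallskip
\noindent \emph{Main obstacle.} The principal technical point is the identification of $\mu_{\cu^n(\nss)}$ with the conditioned Haar measure on $B_n$: while plausible from the bundle description of Haar, it must be checked inductively, using that $\nss$ differs from $\ns$ only in the first structure group (replacing $\mb{T}^n\times H$ by $\mb{T}^n\times\{0\}$) and that the higher structure groups and their cubes are preserved. The rest is bookkeeping with the continuity modulus of restriction-normalization and the choice of $b$ in terms of $|H|$, $n\le 1/b'$, and the metrics $d_n$.
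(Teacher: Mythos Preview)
Your proposal is correct and follows essentially the same approach as the paper's proof: both arguments pass to the kernel of the induced (affine) homomorphism $\ab\to H$, and both rest on the identification of $\mu_{\cu^n(\nss)}$ with the normalized restriction of $\mu_{\cu^n(\ns)}$ to the clopen fiber over $0^{\db{n}}$ (the paper obtains this via disintegration, citing \cite[Lemma 2.2.10]{Cand:Notes2} and \cite[Lemmas 3.3.11, 3.3.12]{Cand:Notes1}). The only cosmetic differences are that the paper translates on the \emph{target} $\ns$ (so that $q\co\pi_1\co\varphi$ becomes a genuine homomorphism) whereas you translate on the \emph{domain} via $x_0$, and that the paper phrases the balance transfer as a contradiction/sequential argument rather than your direct continuity statement; in the latter, note that weak continuity of $\mu\mapsto \mu|_{B_n}/\mu(B_n)$ at Haar gives you ``for $b$ small enough the restriction is $b'$-close'' but not a linear bound $C\cdot b$ as you wrote---this is harmless since only the former is needed.
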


\begin{proof}[Proof of Theorem \ref{thm:fin-rank-implies-quasitoral} assuming Propositions \ref{prop:fin-rank-reduction} and \ref{prop:bal-implies-restric-bal}] Let us use the notation from Proposition \ref{prop:bal-implies-restric-bal}. By Lemma \ref{lem:spli-dep-on-1-factor} it suffices to prove that $(q\co\pi_1)^{-1}(0_H)$ is toral. Let $b'=b'((q\co\pi_1)^{-1}(0_H),k,R)>0$ be the parameter given by Proposition \ref{prop:fin-rank-reduction} applied to $(q\co\pi_1)^{-1}(0_H)$ and let $b=b(\ns,k,R,b')>0$ be given by Proposition \ref{prop:bal-implies-restric-bal}. 

Then, note that if there exists a finite abelian group $\ab$ of rank at most $R$ and a $b$-balanced morphism $\varphi:\mc{D}_1(\ab)\to\ns$, by Proposition \ref{prop:bal-implies-restric-bal} there exists a finite abelian group $\ab'$ of rank at most $R$ and a $b'$-balanced morphism $\varphi':\mc{D}_1(\ab')\to (q\co\pi_1)^{-1}(0_H)$. In particular, by Proposition \ref{prop:fin-rank-reduction}, we conclude that $(q\co\pi_1)^{-1}(0_H)$ is toral and the result follows.
\end{proof}

Before proceeding with the proof of Proposition \ref{prop:bal-implies-restric-bal}, let us record a technical result that we will use throughout this section.

\begin{lemma}\label{lem:cont-of-balanced}
Let $\ns,\nss$ be $k$-step \textsc{cfr} nilspaces and let $\theta:\nss\to \ns$ be a (continuous) fibration. Then for any $b>0$ there exists $b'=b'(b,\theta)>0$ such that, for any finite abelian group $\ab$, if $\varphi:\mc{D}_1(\ab)\to \nss$ is $b'$-balanced, then $\theta\co \varphi$ is $b$-balanced.
\end{lemma}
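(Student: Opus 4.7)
\medskip
\noindent\textbf{Proof proposal.} The plan is to reduce the statement to a routine uniform-continuity argument for pushforward maps between spaces of probability measures, combined with the fact that fibrations between compact nilspaces push Haar cube measures to Haar cube measures.

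First I would observe that the continuous map $\theta:\nss\to\ns$ induces, for each $n\in\mb{N}$, a continuous map at the level of cubes, namely $\theta^{\db{n}}:\cu^n(\nss)\to\cu^n(\ns)$ defined by pointwise composition with $\theta$. This in turn induces a pushforward map $T_n:\mc{P}(\cu^n(\nss))\to\mc{P}(\cu^n(\ns))$, $\mu\mapsto \mu\co(\theta^{\db{n}})^{-1}$, which is continuous for the weak topologies on both spaces. Since $\theta$ is a fibration, a standard fact (see \cite[\S 2.2.2]{Cand:Notes2}) gives that $T_n$ sends the Haar measure $\mu_{\cu^n(\nss)}$ to $\mu_{\cu^n(\ns)}$.

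Next I would use that $\mc{P}(\cu^n(\nss))$ is a compact metric space (by the Banach--Alaoglu argument invoked in Section \ref{sec:prelims}). Hence $T_n$ is uniformly continuous: for each $n$ there exists $\delta_n=\delta_n(b,\theta)>0$ such that any two measures in $\mc{P}(\cu^n(\nss))$ at $d_n$-distance less than $\delta_n$ are sent by $T_n$ to measures at $d_n$-distance less than $b$ on $\mc{P}(\cu^n(\ns))$. (Here I am abusing notation by writing $d_n$ for the fixed metrics on both measure spaces.) Since the definition of $b$-balanced only tests the range $n\le 1/b$, only finitely many $\delta_n$ are relevant, so I can set
\[
b' := \min\!\Bigl(b,\ \min_{0\le n\le 1/b}\delta_n\Bigr)>0,
\]
depending only on $b$ and $\theta$.

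Finally I would verify the conclusion. For any $b'$-balanced morphism $\varphi:\mc{D}_1(\ab)\to\nss$ and any $n\le 1/b\le 1/b'$, the law of $\varphi^{\db{n}}$ satisfies $d_n(\mu_{\cu^n(\mc{D}_1(\ab))}\co(\varphi^{\db{n}})^{-1},\,\mu_{\cu^n(\nss)})<b'\le\delta_n$. Applying $T_n$, which by functoriality of pushforward sends the former measure to $\mu_{\cu^n(\mc{D}_1(\ab))}\co((\theta\co\varphi)^{\db{n}})^{-1}$ and, by the fibration property, sends $\mu_{\cu^n(\nss)}$ to $\mu_{\cu^n(\ns)}$, yields that these two pushforwards are at $d_n$-distance less than $b$. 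This is exactly the statement that $\theta\co\varphi$ is $b$-balanced.

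The only non-routine ingredient is the fact that a fibration pushes Haar cube measures to Haar cube measures; once that is granted, everything else is a standard uniform-continuity argument on compact metric spaces. I do not anticipate any significant obstacle here.
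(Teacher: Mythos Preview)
Your proposal is correct and follows essentially the same argument as the paper: both establish continuity (hence uniform continuity, via compactness of the measure spaces) of the pushforward $\mu\mapsto\mu\co(\theta^{\db{n}})^{-1}$, invoke the fact that fibrations send Haar cube measures to Haar cube measures, and then take $b'$ as the minimum of the finitely many relevant moduli of continuity for $n\le 1/b$. Your choice $b'=\min\bigl(b,\min_{n\le 1/b}\delta_n\bigr)$ is the natural one ensuring $1/b'\ge 1/b$, and the rest matches the paper's proof essentially line for line.
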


\begin{proof}
Recall that the notion of \emph{balance} (see \cite[Definition 5.1]{CSinverse}) assumes a metric on each set of probability measures $\mc{P}(\cu^n(\ns))$ and $\mc{P}(\cu^n(\nss))$ equipped with the weak topology for all $n\in \mb{N}$. Let us denote these by $d^{\ns}_n$ and $d^{\nss}_n$ respectively.

We claim that the map $\theta^{\db{n}}_*:\mc{P}(\cu^n(\nss))\to \mc{P}(\cu^n(\ns))$ that sends $\mu\mapsto \mu \co (\theta^{\db{n}})^{-1}$ is a continuous map. As both $\mc{P}(\cu^n(\ns))$ and $\mc{P}(\cu^n(\nss))$ are metric spaces, if suffices to see that, if $\mu_n\in \cu^n(\nss)\to\mu\in\cu^n(\nss)$, then for any continuous function $f:\cu^n(\ns)\to\mb{C}$ we have $\int f\;\mathrm{d}\theta^{\db{n}}_*(\mu_n)\to \int f\;\mathrm{d}\theta^{\db{n}}_*(\mu)$ as $n\to\infty$. But note that $\int f\;\mathrm{d}\theta^{\db{n}}_*(\mu_n)=\int f\co \theta^{\db{n}}\;\mathrm{d}\mu_n$ and in particular $f\co \theta^{\db{n}}$ is continuous. Thus $\int f\co \theta^{\db{n}}\;\mathrm{d}\mu_n\to \int f\co \theta^{\db{n}}\;\mathrm{d}\mu=\int f\;\mathrm{d}\theta^{\db{n}}_*(\mu)$. To conclude, note that by the Banach-Alaoglu theorem, both $\mc{P}(\cu^n(\ns))$ and $\mc{P}(\cu^n(\nss))$ are compact spaces. Thus, $\theta^{\db{n}}_*$ is uniformly continuous. In particular, for $n\le 1/b$ there exists $\delta_{n,b}>0$ such that for any $\nu,\mu\in \mc{P}(\cu^n(\nss))$ if $d^{\nss}_n(\mu,\nu)<\delta_{n,b}$, then $d^{\ns}_n(\theta^{\db{n}}_*(\mu),\theta^{\db{n}}_*(\nu))<b$. 

Finally, by \cite[Corollary 2.2.7]{Cand:Notes2} we have that $\theta^{\db{n}}_*(\mu_{\cu^n(\nss)})=\mu_{\cu^n(\nss)}\co (\theta^{\db{n}})^{-1}=\mu_{\cu^n(\ns)}$. Therefore, letting $b':=\min(1/b,\delta_{1,b},\ldots,\delta_{\lfloor 1/b\rfloor,b})$ the result follows.
\end{proof}

\begin{proof}[Proof of Proposition \ref{prop:bal-implies-restric-bal}] Let $\theta$ denote the fibration $q\co \pi_1:\ns\to \mc{D}_1(H)$ and, for every $n\in \mb{N}$, let $\theta^{\db{n}}:\cu^n(\ns)\to \cu^n(\mc{D}_1(H))$. Note that, by definition for all $n\in \mb{N}$, the cube set $\cu^n(\theta^{-1}(0_H))$ equals $(\theta^{\db{n}})^{-1}(0_H^{\db{n}})$.

We leave as an exercise for the reader to check that, for an affine homomorphism $\psi:\mc{D}_1(B)\to \mc{D}_1(A)$ where $A$ and $B$ are finite abelian groups, if $\psi$ is $b''$-balanced for some $b''=b''(A)>0$ sufficiently small, then $\psi$ is surjective. Thus, by Lemma \ref{lem:cont-of-balanced}, taking $b$ small enough depending on $H$ (which in turn depends on $\ns$) we have that $\theta\co \varphi$ is surjective.

Assuming that $b$ is small enough as stated in the previous paragraph, note that, by composing with a nilspace translation on $\ns$ permuting the fibers of $\theta$ adequately, we can assume that $\theta\co\varphi$ is a surjective homomorphism $\ab\to H$. The idea now is to restrict $\varphi$ to $\ker(\theta\co\varphi)$ (which has rank at most $R$) and consider the morphism $\varphi|_{\ker(\theta\co\varphi)}:\ker(\theta\co\varphi)\to \theta^{-1}(0_H)$. We want to prove that, for any $b'>0$, if $\varphi$ is $b$-balanced for a small enough $b=b(\ns,b')$, then $\varphi|_{\ker(\theta\co\varphi)}$ is $b'$-balanced.

To prove this we can proceed by contradiction. Suppose that for some $b'>0$ we have a sequence $(b_m)_{m\in \mb{N}}$ with $b_m\to 0$, finite abelian groups $(\ab_m)_{m\in \mb{N}}$ of rank at most $R$, and $b_m$-balanced morphisms $(\varphi_m:\mc{D}_1(\ab_m)\to\ns)_{m\in \mb{N}}$ such that, for every $m\in \mb{N}$, the restriction $\varphi_m|_{\ker(\theta\co\varphi_m)}:\ker(\theta\co\varphi_m)\to \theta^{-1}(0_H)$ is not $b'$-balanced. Then there exists some $n\le 1/b'$ and a subsequence of $m$s such that $d_n(\nu_m,\mu_{\cu^n(\theta^{-1}(0_H))})\ge b'$ where $\nu_m:=\mu_{\cu^n(\mc{D}_1(\ker(\theta\co\varphi_m)))}\co (\varphi_m^{\db{n}})^{-1}$. Abusing the notation, we shall assume that the subsequence of $m$s is the whole set of $m\in \mb{N}$. By definition of the weak topology on $\mc{P}(\cu^n(\ns))$, open sets are generated by continuous functions on $\cu^n(\ns)$, i.e., the collection of sets of the form $U_{f,W}:=\{\nu\in \mc{P}(\cu^n(\ns)): \int f\;\mathrm{d}\nu \in W\}$ where $f:\cu^n(\ns)\to \mb{C}$ is continuous and $W\subset \mb{C}$ is open forms a sub-base for the topology on $\mc{P}(\cu^n(\ns))$. Thus, as the ball of radius $b'$ around $\mu_{\cu^n(\theta^{-1}(0_H))}$ is open, there must exists a continuous function $f:\cu^n(\theta^{-1}(0_H))\to \mb{C}$ and some $\epsilon>0$ such that for all $m\in \mb{N}$
\begin{equation}\label{eq:separation-local-average}\textstyle
\big|\int f\;\mathrm{d}\nu_m-\int f\;\mathrm{d}\mu_{\cu^n(\theta^{-1}(0_H))}\big|>\epsilon>0.
\end{equation}

Now note that, as $\cu^n(\mc{D}_1(H))$ is discrete and $\theta^{\db{n}}$ is continuous, the set $\cu^n(\theta^{-1}(0_H))=(\theta^{\db{n}})^{-1}(0_H^{\db{n}})$ is closed and open inside $\cu^n(\ns)$. Thus, we can extend $f$ to a continuous function in $\cu^n(\ns)$ simply by letting $\wt{f}:\cu^n(\ns)\to \mb{C}$ be given by $x\mapsto f(x)$ if $x\in \cu^n(\theta^{-1}(0_H))$ and $x\mapsto 0$ otherwise. Hence, as $\varphi_m$ is $b_m$-balanced and $b_m\to 0$, we have that \begin{equation}\label{eq:convergence-local-average}\textstyle\int \wt{f}\;\mathrm{d} \mu_{\cu^n(\mc{D}_1(\varphi_m))}\co(\varphi_m^{\db{n}})^{-1}\to \int \wt{f}\;\mathrm{d} \mu_{\cu^n(\ns)} \text{ as }m\to\infty.
\end{equation}

However, this will contradict \eqref{eq:separation-local-average}. Indeed, for any $n \in \mb{N}$, by \cite[Lemma 2.2.10]{Cand:Notes2} combined with \cite[Lemmas 3.3.11 and 3.3.12]{Cand:Notes1}, the Haar measure on $\cu^n(\ns)$ disintegrates with respect to $\theta^{\db{n}}$. As $H$ is a finite abelian group, the Haar measure on $\cu^n(\mc{D}_1(H))$ is simply the uniform measure on the finite set $\cu^n(\mc{D}_1(H))$. Thus, $\tfrac{1}{|\cu^n(\mc{D}_1(H))|}\int \wt{f}\;\mathrm{d} \mu_{\cu^n(\ns)}=\int f\;\mathrm{d}\mu_{\cu^n(\theta^{-1}(0_H))}$. Similarly (even more easily as it is just disintegration with respect to a surjective homomorphism), we have that $\tfrac{1}{|\cu^n(\mc{D}_1(H))|}\int \wt{f}\;\mathrm{d} \mu_{\cu^n(\mc{D}_1(\varphi_m))}\co(\varphi_m^{\db{n}})^{-1}= \int f \;\mathrm{d}\nu_m$. Hence, combining these equalities with \eqref{eq:separation-local-average} and \eqref{eq:convergence-local-average} we have the desired contradiction and the result follows.\end{proof}

\noindent The rest of this section is devoted mainly to proving Proposition \ref{prop:fin-rank-reduction}. Roughly speaking, the idea of the proof is as follows. First, by induction on the step $k$ of the nilspace, we will be able to assume that $\ns$ is a coset nilspace $G/\Gamma$, and that every structure group of $\ns$ is connected except maybe the last one $\ab_k(\ns)$. Then we will proceed by contradiction: we will see that, if the last structure group of $G/\Gamma$ is not connected, then $\cu^n(G/\Gamma)$ will have \emph{many} connected components. On the other hand, for a sufficiently small $b$ we would have that the map $\varphi^{\db{n}}:\cu^n(\mc{D}_1(\ab))\to \cu^n(G/\Gamma)$ should be surjective on the set of connected components of $\cu^n(G/\Gamma)$. The contradiction will occur when we prove that the number of connected components that $\varphi^{\db{n}}$ can reach is \emph{small}.

\begin{proposition}\label{prop:lower-bnd-number-conneceted} Let $\ns$ be a $k$-step \textsc{cfr} nilspace such that $\ns_{k-1}$ is toral and the last structure group $\ab_k(\ns)$ is discrete. Then, for all $n\in \mb{N}$, the number of connected components of $\cu^n(\ns)$ is at least $|\ab_k(\ns)|^{\binom{n}{k}}$.
\end{proposition}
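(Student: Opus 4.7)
The plan is to view $\cu^n(\ns)$ as a topological covering of $\cu^n(\ns_{k-1})$ with finite discrete fiber, and to count its connected components via a monodromy computation.

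Set $A := \ab_k(\ns)$, which is finite since it is compact (as $\ns$ is \textsc{cfr}) and discrete (by hypothesis). The projection $\pi : \ns \to \ns_{k-1}$ is an abelian $A$-bundle, and the induced map $\pi^{\db{n}} : \cu^n(\ns) \to \cu^n(\ns_{k-1})$ has fibers that are $\cu^n(\mc{D}_k(A))$-torsors, hence is a covering map (its fiber being the finite discrete group $\cu^n(\mc{D}_k(A))$). Using Remark~\ref{rem:toral&metric} to write $\ns_{k-1} = G/\Gamma$ with $G$ and each $G_i$ connected and simply connected, the Host--Kra cube group $\cu^n(G, G_\bullet)$ is a connected, simply connected nilpotent Lie group (every connected Lie subgroup of the simply connected nilpotent $G^{\db{n}}$ is simply connected), and $\cu^n(\ns_{k-1})$ is its quotient by the discrete subgroup $\cu^n(\Gamma, \Gamma_\bullet)$, where $\Gamma_\bullet := \Gamma \cap G_\bullet$. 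Thus $\cu^n(\ns_{k-1})$ is path-connected with universal cover $\cu^n(G, G_\bullet)$ and deck group $\cu^n(\Gamma, \Gamma_\bullet)$, so the number of connected components of $\cu^n(\ns)$ equals $|\cu^n(\mc{D}_k(A))|/|M_n|$, where $M_n \le \cu^n(\mc{D}_k(A))$ is the monodromy subgroup of $\pi^{\db{n}}$.

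The next step is to identify $M_n$. By lifting a cube-loop in $\cu^n(\ns_{k-1})$ to a path in $\cu^n(G, G_\bullet)$ and tracking where its endpoints sit modulo $\cu^n(\Gamma, \Gamma_\bullet)$, one shows that $M_n = m^{\db{n}}(\cu^n(\Gamma, \Gamma_\bullet))$, where $m : \Gamma \to A$ is the monodromy homomorphism of the cover $\ns \to \ns_{k-1}$ and $m^{\db{n}}$ denotes its vertex-wise extension. The key observation is then that $M_n \le \cu^n(\mc{D}_{k-1}(A))$: since $\ns_{k-1}$ is $(k-1)$-step, $G_\bullet$ has degree $k-1$, so $\Gamma_k = \Gamma \cap G_k = \{e\}$, and hence $m$ is automatically a filtered morphism from $(\Gamma, \Gamma_\bullet)$ into the degree-$(k-1)$ abelian nilspace $\mc{D}_{k-1}(A)$ (the required condition $m(\Gamma_k) = \{0\}$ holding trivially); consequently $m^{\db{n}}$ factors through $\cu^n(\mc{D}_{k-1}(A))$. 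Using the combinatorial identity $|\cu^n(\mc{D}_j(A))| = |A|^{\sum_{i=0}^{j}\binom{n}{i}}$, we conclude
\[
|\pi_0(\cu^n(\ns))| \;\ge\; \frac{|\cu^n(\mc{D}_k(A))|}{|\cu^n(\mc{D}_{k-1}(A))|} \;=\; |A|^{\binom{n}{k}},
\]
as desired.

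The main technical obstacle is the monodromy identification $M_n = m^{\db{n}}(\cu^n(\Gamma, \Gamma_\bullet))$: it requires carefully unwinding the abelian-bundle cube-lifting property and verifying that the monodromy of the cube-bundle at a cube-loop $\gamma$ coincides with the tuple of vertex-wise monodromies $(m([\gamma_v]))_{v \in \db{n}}$. The simple-connectedness of $\cu^n(G, G_\bullet)$ is the other structural ingredient that must either be checked directly (via the Lie algebra of Host--Kra cubes) or extracted from existing nilspace literature, in order to justify invoking the universal-cover formalism.
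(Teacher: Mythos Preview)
Your argument is correct and takes a genuinely different route from the paper. The paper works entirely inside the coset-nilspace description $\ns\cong G/\Gamma$ (with $G$ the translation group of $\ns$ itself, not of $\ns_{k-1}$): it uses the Gray-code map $\tilde\sigma_k:\cu^k(G)\to \ab_k(\ns)$ as a discrete invariant that is constant on connected components of $\cu^k(G/\Gamma)$, then explicitly exhibits $|\ab_k|^{\binom{n}{k}}$ cubes in $\cu^n(\ns)$ that are pairwise separated by composing $\tilde\sigma_k$ with the $\binom{n}{k}$ different $k$-dimensional face projections $\cu^n\to\cu^k$. This is elementary and constructive, needing no covering-space theory.

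Your approach instead analyzes the principal $\cu^n(\mc{D}_k(A))$-bundle $\cu^n(\ns)\to\cu^n(\ns_{k-1})$ directly via monodromy. The key structural insight you exploit---that $m$ is automatically a filtered morphism into $\mc{D}_{k-1}(A)$ because $\Gamma_\bullet$ has degree $k-1$, forcing $M_n\le\cu^n(\mc{D}_{k-1}(A))$---is cleaner than the paper's argument and in principle yields the \emph{exact} component count $|\cu^n(\mc{D}_k(A))|/|M_n|$, not just the lower bound. The cost is the topological overhead you flag: one must verify that $\cu^n(G,G_\bullet)$ is simply connected (true, since it is a closed connected subgroup of the simply-connected nilpotent group $G^{\db{n}}$) and that the monodromy of the cube-bundle is the vertex-wise monodromy (this holds because the lift of a path in $\cu^n(\ns)$ agrees with its lift in the ambient product bundle $\ns^{\db{n}}\to\ns_{k-1}^{\db{n}}$, by uniqueness of path-lifting). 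Both points are routine but do need to be said; once they are, your proof is complete.
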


\begin{proof}
By \cite[Theorem 6.2]{CSinverse}, the nilspace $\ns$ is a coset nilspace $G/\Gamma$. The idea now is to adapt some ideas of \cite[proof of Theorem A.2]{CSinverse} to get an approximation of the number of connected components of $\cu^n(G/\Gamma)$.

Consider the Gray code map $\sigma_k$ on $G^{\db{k}}$, see \cite[Definition 2.2.22]{Cand:Notes1}. By \cite[Proposition 2.2.25]{Cand:Notes2}, this map restricted to $\cu^k(G)$ takes values in $G_k$. In particular, note that $\sigma_k(\cu^k(\Gamma))$ takes values in $G_k\cap \Gamma$ (abusing the notation, we will consider $\sigma_k$ as a map defined on $\cu^k(G)$). Thus, if we let $\wt{\sigma_k}:=\sigma_k\mod (G\cap \Gamma)$ recalling that $\ab_k(G/\Gamma)\cong G_k/(G_k\cap \Gamma)$, we have that $\wt{\sigma_k}:\cu^k(G)\to \ab_k(G/\Gamma)$ is a continuous function such that $\wt{\sigma_k}(\cu^k(\Gamma))=0$.

Let $J:=\cu^k(G)^0$ be the connected component of the identity in $\cu^k(G)$. This group is normal and open by standard results and thus $J\cu^k(\Gamma)$ is an open subgroup of $\cu^k(G)$. As $\ab_k(G/\Gamma)$ is discrete, it follows that $\wt{\sigma_k}(J\cu^k(\Gamma))=0$. Let $(g_i\in G_k)_{i\in|\ab_k(G/\Gamma)|}$ be a set of representatives so that $\ab_k(G/\Gamma)=\{g_i(G_k\cap \Gamma):i\in|\ab_k(G/\Gamma)|\}$ (without loss of generality, let the representative $g_1$ corresponding to $0_{\ab_k}$ be $\id\in G$) and let $g_i^F:\db{k}\to G$ be the elementary Host--Kra cube (see Definition \ref{def:hk-cubes}) with $F=\{1^k\}$. In particular, note that $\wt{\sigma_k}(g_i^FJ\cu^k(\Gamma))=g_i$ and thus the cosets $g_i^FJ\cu^k(\Gamma)$ are all different from each other. As all these cosets are invariant under quotienting by $\Gamma^{\db{k}}$, we have that $\cu^k(G/\Gamma)$ has at least $|\ab_k(G/\Gamma)|$ connected components.

To estimate now the number of connected components of $\cu^n(G/\Gamma)$, let us introduce some notation. Let $D:=\{v\in \db{n}:\sum_{i=1}^n v_i=k\}$. For each $v\in D$ and each $g\in G_k$ let $g^v\in \cu^n(G)$ be given as $g^v\sbr{w}=g$ if for all $i\in[n]$ we have $v_i\le w_i$ and $g^v\sbr{w}=\id$ otherwise. We now claim that the elements $\{\prod_{v\in D }(h_v)^v\Gamma^{\db{n}}: h_v\in \{g_1,\ldots,g_{|\ab_k(G/\Gamma)|}\}\text{ for }v\in\db{n}\}$ lie all in different connected components in $\cu^n(G/\Gamma)$.

To prove this, let us show that the element $\Gamma^{\db{n}}$ lies in a different connected component from any other element $\prod_{v\in D }(h_v)^v\Gamma^{\db{n}}$ (as the proof in general is essentially the same). Without loss of generality, assume that $h_{(1^k,0^{n-k})}\not=\id$ (recall that we have assumed that $g_1=\id$). Let $p:\cu^n(G/\Gamma)\to \cu^k(G/\Gamma)$ be the projection $\q\mapsto \q\co\phi$ where $\phi:\db{k}\to\db{n}$ is the discrete-cube morphism  $w\mapsto (w,0^{n-k})$. As this map $p$ is continuous, if $\prod_{v\in D }(h_v)^v\Gamma^{\db{n}}$ lay in the same component as $\Gamma^{\db{n}}$ then  $p(\prod_{v\in D }(h_v)^v\Gamma^{\db{n}})=h_{(1^k,0^{n-k})}^F\Gamma^{\db{k}}$ would lie in the same component as $p(\Gamma^{\db{n}})=\Gamma^{\db{k}}$. But we know that these two elements lie in different connected components and thus the result follows.

Therefore, in $\cu^n(\ns)=\cu^n(G/\Gamma)$ we have at least $|\ab_k(G/\Gamma)|^{|D|}=|\ab_k(G/\Gamma)|^{\binom{n}{k}}$ different connected components.\end{proof}

Proposition \ref{prop:lower-bnd-number-conneceted} gives us a \emph{lower} bound on the number of connected components of $\cu^n(\ns)$. Let us now prove that if a morphism $\varphi:\mc{D}_1(\ab)\to \ns$ is sufficiently balanced, then for each connected component $C\subset \cu^n(\ns)$ there is some cube $\q\in\cu^n(\mc{D}_1(\ab))$ such that $\varphi\co\q\in C$, which will yield an \emph{upper} bound on the number of components. More precisely, we shall prove the following result.

\begin{lemma}\label{lem:bal-implies-surj}
Let $\ns$ be a $k$-step \textsc{cfr} nilspace such that $\ns_{k-1}$ is toral and let $n\in\mb{N}$. Then there exists $b=b(\ns,n)>0$ such that if $\ab$ is a finite abelian group and $\varphi:\mc{D}_1(\ab)\to\ns$ is $b$-balanced then the following holds. For any connected component $C\subset \cu^n(\ns)$ there exists $\q\in\cu^n(\mc{D}_1(\ab))$ such that $\varphi\co\q\in C$.
\end{lemma}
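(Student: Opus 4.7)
The plan is to exploit the balanced condition by viewing each connected component as a clopen set whose indicator is a continuous test function in the weak topology, so that positive Haar measure transfers to positive pushforward measure for sufficiently balanced $\varphi$.

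First I would establish two structural facts about $\cu^n(\ns)$. Since $\ns$ is \textsc{cfr}, the last structure group $\ab_k(\ns)$ is a compact abelian Lie group, hence has finitely many connected components; moreover $\ns_{k-1}$ is toral and therefore connected. Using the fibration $\ns\to\ns_{k-1}$ (with connected total space $\cu^n(\ns_{k-1})$) and the induced bundle description of $\cu^n(\ns)$ over $\cu^n(\ns_{k-1})$ with fiber built from $\cu^n(\mc{D}_k(\ab_k(\ns)))$, one concludes that $\cu^n(\ns)$ has only finitely many connected components, each of which is therefore clopen. The second fact I need is that the Haar measure $\mu_{\cu^n(\ns)}$ has full support: every non-empty open subset carries positive mass. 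This propagates through the tower of bundles using the disintegration formula from \cite[Lemma 2.2.10]{Cand:Notes2} combined with the fact that the Haar measure on each compact Lie structure group has full support.

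Next, let $C_1,\ldots,C_M$ be the (finitely many) connected components of $\cu^n(\ns)$, and set $\alpha:=\min_{j\le M}\mu_{\cu^n(\ns)}(C_j)>0$. Since each $C_j$ is clopen, its indicator function $\mathbf{1}_{C_j}$ is a continuous function on $\cu^n(\ns)$, so the evaluation map $\mu\mapsto \mu(C_j)$ is continuous on $\mc{P}(\cu^n(\ns))$ in the weak topology. By compactness of $\mc{P}(\cu^n(\ns))$ (Banach--Alaoglu, as recalled in the paper) this evaluation map is uniformly continuous with respect to the metric $d_n$; hence there is $\eta>0$ such that $d_n(\mu,\mu_{\cu^n(\ns)})<\eta$ forces $|\mu(C_j)-\mu_{\cu^n(\ns)}(C_j)|<\alpha/2$ for all $j\le M$.

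Finally, set $b:=\min(1/n,\eta)$. For any finite abelian $\ab$ and any $b$-balanced morphism $\varphi:\mc{D}_1(\ab)\to\ns$, the definition of balance gives $d_n\bigl(\mu_{\cu^n(\mc{D}_1(\ab))}\co(\varphi^{\db{n}})^{-1},\,\mu_{\cu^n(\ns)}\bigr)<b\le \eta$. Hence $\bigl(\mu_{\cu^n(\mc{D}_1(\ab))}\co(\varphi^{\db{n}})^{-1}\bigr)(C_j)\ge \alpha/2>0$ for each component $C_j$, which means the preimage $(\varphi^{\db{n}})^{-1}(C_j)$ is a non-empty subset of $\cu^n(\mc{D}_1(\ab))$; any $\q$ in this preimage satisfies $\varphi\co\q\in C_j$, as required. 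The only genuinely non-routine step is the first one: verifying that the structural claims (finiteness of components, clopenness, full support of Haar measure) follow cleanly from the \textsc{cfr} bundle tower together with the toral assumption on $\ns_{k-1}$; once that is in place, the measure-theoretic portmanteau argument is immediate.
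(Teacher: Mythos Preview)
Your proposal is correct and follows essentially the same line as the paper: use that the connected components of $\cu^n(\ns)$ are clopen so their indicators are continuous test functions, then let the balance condition transfer positive Haar mass to the pushforward. The only difference is that the paper shortcuts your structural step by invoking \cite[Lemma A.3]{CSinverse}, which gives directly that all components of $\cu^n(\ns)$ have \emph{equal} Haar measure (hence finitely many, each of measure $1/\ell$), so no separate bundle-tower or full-support argument is needed.
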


\begin{proof}
By \cite[Lemma A.3]{CSinverse}, all connected components of $\cu^n(\ns)$ have equal Haar measure. In particular, there are finitely many such components, say $C_1,\ldots,C_\ell$, and $\mu_{\cu^n(\ns)}(C_i)=1/\ell$ for all $i\in[\ell]$. As those components must be open and closed sets, the indicator function $1_{C_i}:\cu^n(\ns)\to \mb{C}$ is a continuous function. Let $b<1/n$. If $\varphi$ is $b$-balanced, then $d_n(\mu_{\cu^n(\mc{D}_1(\ab))}\co (\varphi^{\db{n}})^{-1},\mu_{\cu^n(\ns)})<b$ where $d_n$ is a distance in the space of Borel probability measures on $\cu^n(\ns)$. Thus, for each $i\in[\ell]$ there exists $b_i>0$ such that if $d_n(\mu_{\cu^n(\mc{D}_1(\ab))}\co (\varphi^{\db{n}})^{-1},\mu_{\cu^n(\ns)})<b_i$ then $|\int 1_{C_i}\;\mathrm{d}\mu_{\cu^n(\mc{D}_1(\ab))}\co (\varphi^{\db{n}})^{-1}-\int 1_{C_i}\;\mathrm{d}\mu_{\cu^n(\ns)}|=|\int 1_{C_i}\;\mathrm{d}\mu_{\cu^n(\mc{D}_1(\ab))}\co (\varphi^{\db{n}})^{-1}-1/\ell|<1/(2\ell)$. Letting $b:=\min(1/n,b_1,\ldots,b_\ell)>0$ the result follows.
\end{proof}

The last ingredient that we shall need is a result telling us that, if $\ab$ has rank at most $R$, then there is a useful \emph{upper} bound on the number of connected components of $\cu^n(\ns)$ containing elements of the form $\varphi\co\q$ for $\q\in \cu^n(\mc{D}_1(\ab))$. First, let us prove a technical result.

\begin{lemma}\label{lem:fact-of-morphisms}
Let $\ns$ be a $k$-step \textsc{cfr} coset nilspace such that $\ns_{k-1}$ is toral, let $R\in \mb{N}$, and let $\varphi:\mc{D}_1(\mb{Z}^r)\to \ns$ be a morphism for some $r\in [R]$. Then there exists a nilmanifold $G/\Gamma$ such that $\ns\cong G/\Gamma$ and, letting $G^0$ be the connected component of $G$, the following holds. There exists $g\in \hom(\mc{D}_1(\mb{Z}^r),G^0)$ and $f\in \hom(\mc{D}_1(\mb{Z}^r),\mc{D}_k(\ab_k(\ns)))$ such that $\varphi(\cdot)= g(\cdot)\Gamma+f(\cdot)$.  
\end{lemma}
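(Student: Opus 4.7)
The plan is to realize $\ns$ as a coset nilspace $G/\Gamma$ chosen so that the projection to $\ns_{k-1}$ has connected ambient Lie group, then to lift $\varphi$ modulo the top structure group into the identity component $G^0$, and finally to identify the residual part as a polynomial of degree at most $k$ into $\ab_k(\ns)$.

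To set up the first step, I use that $\ns$ is a \textsc{cfr} coset nilspace together with the hypothesis that $\ns_{k-1}$ is toral: by Remark \ref{rem:toral&metric} combined with a standard central-extension construction of $\ns$ over $\ns_{k-1}$, one can pick a representation $\ns\cong G/\Gamma$ with filtration $G_\bullet$ such that each Lie group $G_i/(G_k\cap G_i)$ for $i\in[k-1]$ is connected and simply-connected. In particular $G/G_k$ is connected, so $G=G^0 G_k$ where $G^0$ denotes the identity component of $G$. Projecting $\varphi$ to the morphism $\varphi_{k-1}:\mc{D}_1(\mb{Z}^r)\to\ns_{k-1}$ and using that the ambient group representing $\ns_{k-1}$ is connected and simply-connected, the map $\varphi_{k-1}$ lifts to a polynomial map $\tilde g:\mb{Z}^r\to G/G_k$ in the sense of a morphism between group nilspaces.

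Next I lift $\tilde g$ to a polynomial map $g:\mb{Z}^r\to G^0$. By a Leibman-type Taylor expansion, $\tilde g$ can be written as an ordered product of the form $\prod_{|\alpha|\le k-1}(\bar a_\alpha)^{\binom{x}{\alpha}}$ with Taylor coefficients $\bar a_\alpha\in G_{|\alpha|}/(G_k\cap G_{|\alpha|})$. Since $G^0\cap G_{|\alpha|}$ is open in $G_{|\alpha|}$ while the quotient $G_{|\alpha|}/(G_k\cap G_{|\alpha|})$ is connected, the induced projection $G^0\cap G_{|\alpha|}\to G_{|\alpha|}/(G_k\cap G_{|\alpha|})$ is surjective. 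Lifting each $\bar a_\alpha$ to some $a_\alpha\in G^0\cap G_{|\alpha|}$ and reassembling the resulting Taylor expansion yields the desired $g\in\hom(\mc{D}_1(\mb{Z}^r),G^0)$ reducing modulo $G_k$ to $\tilde g$.

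Finally, since $\varphi(x)$ and $g(x)\Gamma$ project to the same element $\varphi_{k-1}(x)$ in $\ns_{k-1}$, they lie in the same fiber of the abelian bundle $\ns\to\ns_{k-1}$, which is a torsor over $\ab_k(\ns)$. Setting $f(x):=\varphi(x)-g(x)\Gamma\in\ab_k(\ns)$, I would show $f\in\hom(\mc{D}_1(\mb{Z}^r),\mc{D}_k(\ab_k(\ns)))$ as a consequence of the abelian-bundle formalism: applying both morphisms $\varphi$ and $x\mapsto g(x)\Gamma$ to an arbitrary $(k+1)$-cube $\q\in\cu^{k+1}(\mc{D}_1(\mb{Z}^r))$ gives two cubes in $\ns$ with identical image in $\ns_{k-1}$, so their fiberwise difference $f\co \q$ satisfies the defining cube relation for $\cu^{k+1}(\mc{D}_k(\ab_k(\ns)))$, namely that its signed sum over the cube vanishes. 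The principal obstacle is securing the representation in the first step with $G=G^0 G_k$; this is where the toral hypothesis on $\ns_{k-1}$ is used essentially, and once that is in place the lifting in step two and the extraction of $f$ in step three are forced by standard topology and the abelian-bundle formalism.
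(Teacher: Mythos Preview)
Your proposal is correct and follows essentially the same three-step strategy as the paper: project $\varphi$ to the toral factor $\ns_{k-1}$, lift the Taylor coefficients of that projection into the identity component of the ambient group, and read off $f$ as the fiberwise difference in the $\ab_k(\ns)$-bundle. The only notable difference is in the bookkeeping of step one: the paper takes $G=\tran(\ns)$ (the translation group, via \cite[Theorem A.1]{CSinverse}) and then lifts Taylor coefficients from $\tran_i(\ns_{k-1})^0$ to $\tran_i(\ns)^0$ by quoting \cite[Proposition 2.9.20]{Cand:Notes2}, whereas you appeal to an unspecified central-extension presentation and the elementary fact that an open subgroup of a connected group is the whole group; both choices work.
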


\begin{proof}
By \cite[Theorem A.1]{CSinverse}, the nilspace $\ns$ is isomorphic to $G/\Gamma$ where $G=\tran(\ns)$ with filtration $G_\bullet=(\tran_i(\ns))_{i\in [k]}$ and, for a fixed $x_0\in \ns$, we have that $\Gamma=\stab_G(x_0)$. The map $\pi_{k-1}\co \varphi:\mc{D}_1(\mb{Z}^r)\to \ns_{k-1}$ takes values in a toral nilspace. By \cite[Theorem 2.9.17]{Cand:Notes2}, the nilspace $\ns_{k-1}\cong \tran(\ns_{k-1})^0/\Gamma'$ where $\Gamma'=\stab_{\tran(\ns_{k-1})^0}(x_0)$. In particular, there exists $g'\in \hom(\mc{D}_1(\mb{Z}^r),\tran(\ns_{k-1})^0)$ such that $\pi_{k-1}\co\varphi(\cdot)=g'(\cdot)\Gamma'$. In particular, we have that $g'(v_1,\ldots,v_r)=\prod_{\underline{w}\in\mb{Z}_{\ge 0}:w_1+\cdots+w_r\le k} h_{\underline{w}}^{\binom{\underline{v}}{\underline{w}}}$ where $\binom{\underline{v}}{\underline{w}}=\binom{v_1}{w_1}\cdots\binom{v_r}{w_r}$ and $h_{\underline{w}}\in \tran_{w_1+\cdots+w_r}(\ns_{k-1})^0$. By \cite[Proposition 2.9.20]{Cand:Notes2}, for each $h_{\underline{w}}$ there exists $h'_{\underline{w}}\in \tran_{w_1+\cdots+w_r}(\ns)^0$ such that $\wh{\pi_{k-1}}(h'_{\underline{w}})=h_{\underline{w}}$.\footnote{This means that for any $x\in\ns$, we have $\pi_{k-1}\co h'_{\underline{w}}(x)=h_{\underline{w}}\co \pi_{k-1}(x)$.} Let us define $g(v_1,\ldots,v_r):=\prod_{\underline{w}\in\mb{Z}_{\ge 0}:w_1+\cdots+w_r\le k} (h'_{\underline{w}})^{\binom{\underline{v}}{\underline{w}}}$.

Clearly, we then have that $g(\cdot)\Gamma$ is a morphism in $\hom(\mc{D}_1(\mb{Z}^r),\ns)$ such that $\pi_{k-1}\co g(\cdot)\Gamma = \pi_{k-1}\co \varphi$. Therefore $\varphi(\cdot)=g(\cdot)\Gamma+f(\cdot)$ where $f\in\hom(\mc{D}_1(\mb{Z}^r),\mc{D}_k(\ab_k(\ns)))$, and the result follows.\end{proof}

\begin{lemma}\label{lem:few-reachable-comp}
Let $\ns$ be a $k$-step \textsc{cfr} coset nilspace such that $\ns_{k-1}$ is toral and $\ab_k(\ns)$ is discrete, let $R\in \mb{N}$, and let $\varphi:\mc{D}_1(\mb{Z}^r)\to \ns$ be a morphism for some $r\in [R]$. Then, for every $n\in \mb{N}$, the image of $\cu^n(\mc{D}_1(\mb{Z}^r))$ under the map $\varphi^{\db{n}}$ is included in a union of at most $(|\ab_k(\ns)|k!)^{R(n+1)}$ connected components of $\cu^n(\ns)$.
\end{lemma}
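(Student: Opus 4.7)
The approach is to use Lemma \ref{lem:fact-of-morphisms} to split $\varphi$ into a ``continuous'' part $g$ valued in $G^0$ and a ``discrete'' part $f$ valued in the finite group $\ab_k(\ns)$, then to show that only $f$ affects the connected component of $\varphi^{\db n}(\q)$ in $\cu^n(\ns)$, and finally to bound the number of possible cubes $f^{\db n}(\q)$.

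First, fix a model $\ns \cong G/\Gamma$ as in Lemma \ref{lem:fact-of-morphisms} and write $\varphi(x) = g(x)\Gamma + f(x)$ with $g \in \hom(\mc{D}_1(\mb{Z}^r), G^0)$ and $f \in \hom(\mc{D}_1(\mb{Z}^r), \mc{D}_k(\ab_k(\ns)))$. Picking a set-theoretic lift $\tilde f : \mb{Z}^r \to G_k$ of $f$ and using the centrality of $G_k$ in $G$, we have $\varphi^{\db n}(\q) = \tilde f^{\db n}(\q) \cdot g^{\db n}(\q) \cdot \Gamma^{\db n}$ inside $\cu^n(G/\Gamma)$. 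The first main step is to show that $g^{\db n}(\q)$ always lies in the connected component $J_n := \cu^n(G)^0$ of $\cu^n(G)$. Since $G^0$ is a connected nilpotent Lie group, its exponential map is a diffeomorphism, so the polynomial formula for $g$ extends to a continuous map $\tilde g : \mb{R}^r \to G^0$ via real powers $h^t := \exp(t \log h)$. The path $t \mapsto \tilde g^{\db n}(t \q) \in \cu^n(G^0) \subseteq \cu^n(G)$ then connects $g^{\db n}(\q)$ to the identity cube, placing $g^{\db n}(\q)$ in $J_n$. Consequently, the connected component of $\varphi^{\db n}(\q)$ in $\cu^n(G/\Gamma)$ depends only on $\tilde f^{\db n}(\q)$ modulo $J_n \Gamma^{\db n}$, and hence only on $f^{\db n}(\q) \in \cu^n(\mc{D}_k(\ab_k(\ns)))$.

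Parametrizing $\q \in \cu^n(\mc{D}_1(\mb{Z}^r))$ by $(a_0, \ldots, a_n) \in (\mb{Z}^r)^{n+1}$ via $\q(v) = a_0 + \sum_i v_i a_i$, it remains to bound the number of distinct cubes $f^{\db n}(\q)$ as the $a_i$'s vary. The key input is that any polynomial map $\mb{Z}^r \to A$ of degree $\le k$ into a finite abelian group $A$ factors through $(\mb{Z}/N)^r$ for $N := |A|\,k!$: reducing to the single-variable case by coordinate-wise periodicity, this follows from the identity $\binom{x+N}{w} - \binom{x}{w} = \sum_{j=1}^{w} \binom{N}{j}\binom{x}{w-j}$ combined with the standard divisibility $|A| \mid \binom{|A|k!}{j}$ for $1 \le j \le k$ (which holds because $|A|k!/j!$ is an integer multiple of $|A|$ when $j \le k$). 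Applied to $f$, this shows $f^{\db n}(\q)$ depends only on the reductions $(a_0, \ldots, a_n) \bmod N$, yielding at most $N^{r(n+1)} \le (|\ab_k(\ns)|\,k!)^{R(n+1)}$ distinct cubes and therefore at most that many connected components of $\cu^n(\ns)$ reached by $\varphi^{\db n}$.

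The most delicate technical point is justifying the real-parameter extension $\tilde g$: one must check that $\tilde g^{\db n}(t\q) \in \cu^n(G^0)$ for every $t \in [0,1]$, which hinges on real-power interpolation preserving the Host--Kra cube structure of the filtration of $G^0$. The assumption that $\ns_{k-1}$ is toral enters essentially here, since by Remark \ref{rem:toral&metric} it allows us to arrange each $G_i$ with $i < k$ to be connected and simply-connected, so that Mal'cev coordinates and real powers behave correctly along the filtration.
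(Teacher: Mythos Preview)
Your proof follows essentially the same strategy as the paper's: decompose $\varphi = g\Gamma + f$ via Lemma~\ref{lem:fact-of-morphisms}, show the $g$-part contributes only one connected component, and bound the number of $f$-cubes using $|\ab_k|k!$-periodicity (your binomial-identity justification fleshes out what the paper simply asserts).

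The one place you diverge is in arguing that $g^{\db n}(\q)$ lies in $\cu^n(G)^0$, and here you overcomplicate matters and slip. You assert that the exponential map of $G^0$ is a diffeomorphism, but this requires simple-connectedness, and your appeal to Remark~\ref{rem:toral&metric} does not supply it: that remark concerns a nilmanifold presentation of a \emph{toral} nilspace, whereas here $G=\tran(\ns)$ is the translation group of $\ns$ itself (with $\ns$ not toral), so $G^0$ need not be simply connected. Your path argument is salvageable (pick one-parameter subgroups through each Taylor coefficient $h'_{\underline w}$ rather than using $\log$), but the paper bypasses this entirely: since $g$ is by construction a morphism into $G^0$ with the filtration $(G_i^0)_i$ of connected groups, $g^{\db n}(\q)$ lands in $\cu^n(G^0_\bullet)$, which is connected (being a group generated by connected subsets through the identity) and hence lies in $\cu^n(G)^0$. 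No real-parameter extension is needed.
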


\begin{proof}
By Lemma \ref{lem:fact-of-morphisms}, we can assume that $\ns\cong G/\Gamma$ and that $\varphi=g\Gamma+f$ where $g\in \hom(\mc{D}_1(\mb{Z}^r),G^0)$ and $f\in \hom(\mc{D}_1(\mb{Z}^r),\mc{D}_k(\ab_k(\ns)))$. The key observation is that the image of $g$ lies entirely within $\cu^n(G^0)$. Thus, the image of $g^{\db{n}}\Gamma^{\db{n}}$ lies within the image under $\cu^n(G)\to \cu^n(G/\Gamma)$ of the connected component $\cu^n(G)^0$ which is thus connected. Therefore, the number of connected components that $\varphi^{\db{n}}$ can reach is at most given by the possible different values of the set $f^{\db{n}}(\cu^n(\mc{D}_1(\mb{Z}^r)))\subset \cu^n(\mc{D}_k(\ab_k(\ns)))$. But note that $f:\mc{D}_1(\mb{Z}^r)\to \mc{D}_k(\ab_k(\ns))$ is a morphism and thus, it has a Taylor expansion $f(v_1,\ldots,v_r)=\sum_{\underline{w}\in \mb{Z}^r_{\ge 0}:w_1+\cdots+w_r\le k} a_{\underline{w}}\binom{\underline{v}}{\underline{w}}$ where $a_{\underline{w}}\in \ab_k(\ns)$ and $\binom{\underline{v}}{\underline{w}}:=\binom{v_1}{w_1}\cdots\binom{v_r}{w_r}$. Thus, it is easy to see that $f$ is $|\ab_k(\ns)|k!$-periodic. In particular\footnote{Here we use, as elsewhere in this paper, the notation $\mb{Z}_n$ for the cyclic group of integers modulo $n$.} $|f^{\db{n}}(\cu^n(\mc{D}_1(\mb{Z}^r)))|\le |\cu^n(\mb{Z}_{|\ab_k(\ns)|k!}^r)|\le (|\ab_k(\ns)|k!)^{r(n+1)}\le (|\ab_k(\ns)|k!)^{R(n+1)}$, and the result follows.
\end{proof}

\begin{proof}[Proof of Proposition \ref{prop:fin-rank-reduction}]
We prove the result by induction on $k$. Note that the case $k=1$ is true by hypothesis and thus we shall assume that $k\ge 2$, that $\ns$ is $k$-step, and that Proposition \ref{prop:fin-rank-reduction} holds for step up to $k-1$. Let $b_1'=b_1'(\ns_{k-1},k-1,R)>0$ be the parameter given by Proposition \ref{prop:fin-rank-reduction} applied to $\ns_{k-1}$. Let $b_1=b_1(b_1',\pi_{k-1})$ be the parameter given by Lemma \ref{lem:cont-of-balanced} applied to the projection map $\pi_{k-1}:\ns\to\ns_{k-1}$ and $b_1'$. Letting $b<b_1$, we may assume that $\ns_{k-1}$ is toral. 

Let now $\ns'$ be the quotient of $\ns$ by the toral part of $\ab_k(\ns)$, see \cite[Proposition A.19]{CGSS-p-hom}. Hence, we have a fibration $\phi:\ns\to\ns'$ and let us denote $\phi\co\varphi$ by $\varphi'$. If $\ab_k(\ns')$ is trivial then we are done. Otherwise, let $n=n(\ns',k,R)>0$ be large enough such that \begin{equation}\label{eq:bnd-of-conneceted-comp}|\ab_k(\ns')|^{\binom{n}{k}}>(|\ab_k(\ns')|k!)^{R(n+1)}.\end{equation} By Lemma \ref{lem:bal-implies-surj} combined with proposition \ref{prop:lower-bnd-number-conneceted}, for some $b_2'=b_2'(n,\ns',R)>0$ small enough, we have that $(\varphi')^{\db{n}}(\cu^n(\mc{D}_1(\ab)))$ lie in at least $|\ab_k(\ns')|^{\binom{n}{k}}$ different connected components of $\cu^n(\ns')$. On the other hand, by Lemma \ref{lem:few-reachable-comp} (and noting that we may take a surjective homomorphism $\mb{Z}^R\to \ab$ as the rank of $\ab$ is at most $R$), we have that $(\varphi')^{\db{n}}(\cu^n(\mc{D}_1(\ab)))$ lie in at most $(|\ab_k(\ns)|k!)^{R(n+1)}$ connected components of $\cu^n(\ns')$, and this contradicts \eqref{eq:bnd-of-conneceted-comp}. Hence, if $\varphi'$ is sufficiently balanced we have that $\ab_k(\ns')$ must be the trivial group $\{0\}$.

Now let $b_2=b_2(b_2',\phi)>0$ be as given by Lemma \ref{lem:cont-of-balanced} applied to $\phi$ and $b_2'$. Finally, letting $b:=\min(b_1,b_2)/2$, the result follows.
\end{proof}

To finish this section we record the following consequence which completes the first main step in our strategy.

\begin{proposition}\label{prop:mainstep1}
For every $k,R\in \mb{N}$ and $\delta>0$, there exists $C_0>0$ and $\varepsilon_0>0$ such that the following holds. Let $\ab$ be a finite abelian group of rank at most $R$, and let $f:\ab\to\mb{C}$ be a 1-bounded function satisfying $\|f\|_{U^{k+1}}\geq \delta$. Then there exists a subgroup $\ab_0\leq \ab$ of index at most $C_0$, a filtered nilmanifold $G_0/\Gamma_0$ of degree $k$ where the filtration on $G_0$ consists of connected and simply-connected Lie groups, such that the associated toral nilspace $\nss=G_0/\Gamma_0$ is of complexity at most $C_0$, a 1-bounded Lipschitz function $F_0:\nss\to \mb{C}$ of Lipschitz norm at most $C_0$, a morphism $\varphi_0:\mc{D}_1(\ab_0)\to\nss$ and some element $t_0\in \ab$ such that 
\begin{equation}\label{eq:mainstep1}
|\mb{E}_{y\in \ab_0}f(t_0+y) F_0(\varphi_0(y))|\geq \varepsilon_0.
\end{equation}
\end{proposition}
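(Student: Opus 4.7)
The plan is to combine the general inverse theorem \cite[Theorem 5.2]{CSinverse} with Theorem \ref{thm:fin-rank-implies-quasitoral} and Lemma \ref{lem:spli-dep-on-1-factor}, then extract by pigeonhole a toral fibre on which Remark \ref{rem:toral&metric} can be invoked to realise the toral nilspace as a filtered nilmanifold with connected and simply-connected filtration subgroups.

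First, I would apply \cite[Theorem 5.2]{CSinverse} with a balance parameter $b>0$ to be chosen: this produces a $k$-step \textsc{cfr} nilspace $\ns$ drawn from a finite collection (depending on $k,\delta,b$) of bounded complexity, a $b$-balanced morphism $\varphi:\mc{D}_1(\ab)\to\ns$, and a 1-bounded Lipschitz function $F:\ns\to\mb{C}$ of Lipschitz norm bounded in terms of $k,\delta,b$, such that $|\mb{E}_{x\in\ab}f(x)\overline{F(\varphi(x))}|\ge\varepsilon_1(k,\delta,b)$. I would then take $b$ small enough that $b\le b(\ns,k,R)$ for every $\ns$ in this finite collection, using Theorem \ref{thm:fin-rank-implies-quasitoral}; this forces $\ns$ to be quasitoral.

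Next, using the isomorphism $\ns_1\cong\mb{T}^n\times H$ with $H$ a finite abelian group of size bounded by the complexity of $\ns$, let $q:\ns_1\to H$ denote the projection and set $\theta:=q\co\pi_1\co\varphi$. This is a morphism of 1-step nilspaces $\mc{D}_1(\ab)\to\mc{D}_1(H)$, hence an affine homomorphism $\theta(x)=\psi(x)+c$ for some group homomorphism $\psi:\ab\to H$. Set $\ab_0:=\ker\psi\le\ab$; then $[\ab:\ab_0]\le|H|$ and $\ab_0$ has rank at most $R$. By Lemma \ref{lem:spli-dep-on-1-factor}, each fibre $\nss_h:=(q\co\pi_1)^{-1}(h)$ is a toral subnilspace of $\ns$ with structure groups $\mb{T}^n,\ab_2(\ns),\ldots,\ab_k(\ns)$, and since $\theta$ is constant on each coset of $\ab_0$, the map $\varphi$ sends each such coset into a single fibre $\nss_h$.

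Expanding the original correlation over the cosets of $\ab_0$ and applying the pigeonhole principle yields some $t_0\in\ab$ with
\[
\big|\mb{E}_{y\in\ab_0}f(t_0+y)\overline{F(\varphi(t_0+y))}\big|\ \ge\ \varepsilon_1/[\ab:\ab_0]\ =:\ \varepsilon_0.
\]
Define $\varphi_0(y):=\varphi(t_0+y)$, which is a morphism $\mc{D}_1(\ab_0)\to\nss:=\nss_{\psi(t_0)+c}$ (being the composition of the translation morphism $y\mapsto t_0+y$ with $\varphi$, whose image lies in $\nss$), and let $F_0:=F|_{\nss}$, which is 1-bounded and Lipschitz with norm inherited from $F$. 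By Remark \ref{rem:toral&metric}, the toral nilspace $\nss$ is isomorphic as a compact nilspace to a filtered nilmanifold $G_0/\Gamma_0$ of degree $k$ with every filtration subgroup connected and simply connected; the complexity of $\nss$ is controlled by that of $\ns$, so taking $C_0$ to be the maximum of $|H|$, the complexity of $\nss$, and the Lipschitz norm of $F_0$ produces the desired constants depending only on $k,R,\delta$.

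I expect the only real subtlety to be the coordinated choice of the balance parameter $b$: it must be small enough to trigger Theorem \ref{thm:fin-rank-implies-quasitoral} uniformly over every nilspace appearing in the finite collection produced by \cite[Theorem 5.2]{CSinverse} at that same value of $b$. Because the collection is finite and each threshold $b(\ns,k,R)$ is strictly positive, this choice is admissible, and the resulting degradation of $\varepsilon_1$ is a constant depending only on $k,R,\delta$, which is absorbed into $\varepsilon_0$.
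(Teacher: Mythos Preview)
Your approach is essentially the same as the paper's: apply \cite[Theorem 5.2]{CSinverse}, use Theorem~\ref{thm:fin-rank-implies-quasitoral} to force the nilspace to be quasitoral, decompose into toral fibres via Lemma~\ref{lem:spli-dep-on-1-factor}, pigeonhole onto a coset, and invoke Remark~\ref{rem:toral&metric}.

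The one point that needs correction is exactly the one you flag as subtle. Your proposed resolution is circular: you want to choose a single $b$ small enough for every nilspace in the collection produced by \cite[Theorem 5.2]{CSinverse}, but that collection itself depends on $b$. The theorem in \cite{CSinverse} is not stated with a single balance parameter; it takes as input a \emph{function} $b:\mb{R}_{>0}\to\mb{R}_{>0}$ and returns a nilspace of some complexity $m\le M$ together with a $b(m)$-balanced morphism. The paper therefore defines $b(m)$ to be smaller than the threshold $b(\ns,k,R)$ of Theorem~\ref{thm:fin-rank-implies-quasitoral} for \emph{every} $k$-step \textsc{cfr} nilspace $\ns$ of complexity at most $m$, and feeds this function into \cite[Theorem 5.2]{CSinverse}. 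This breaks the circularity you were worried about. With that adjustment your proof is complete and matches the paper's. (A minor aside: your pigeonhole loses a factor of $[\ab:\ab_0]$ in $\varepsilon_0$; the paper avoids this by writing the global average as an average over $H$ of the coset averages, so the maximum coset average is at least $\varepsilon_0$ without division. Either way the conclusion holds.)
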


\begin{proof}
Fix a function $b_{k,R}:\mb{R}_{>0}\to\mb{R}_{>0}$ such that, for every $k$-step \textsc{cfr} nilspace $\ns$ of complexity at most $m$, we have that $b(m)$ is less than the parameter $b(\ns,k,R)$ required to obtain the conclusion of Theorem \ref{thm:fin-rank-implies-quasitoral}. By Theorem 5.2 from \cite{CSinverse} applied with this function $b$ and $k,\delta$, there is $M>0$ such that, for the given function $f$, there exists $m\leq M$ and a $b(m)$-balanced 1-bounded nilspace polynomial $F\co\varphi$ on $\ab$ of complexity at most $m$ such that $\langle f,F\co\varphi\rangle\geq \varepsilon_0:=\delta^{2^{k+1}}/2$. In particular the morphism $\varphi:\ab\to\ns$ is sufficiently balanced to that, by Theorem \ref{thm:fin-rank-implies-quasitoral}, the nilspace $\ns$ must be quasitoral.

Let $H$ be the finite abelian group such that $\ns_1\cong H\times \mb{T}^n$ and note that  $q\co\pi_1\co\varphi$ is a surjective affine homomorphism $\ab\to H$, i.e.\ a map $z\mapsto \theta(z)+h_0$, where $\theta:\ab\to H$ is a surjective homomorphism. 
By Lemma \ref{lem:spli-dep-on-1-factor}, we have a partition
\begin{equation}
   \ns=\bigsqcup_{h\in H} \nss_h 
\end{equation}
where for every $h\in H$ the nilspace $\nss_h:=(q\co\pi_1)^{-1}(h)$ is toral. Let $\ab_0:=\ker(\theta)$, a subgroup of $\ab$ of index $|H|$, and note that for every $h\in H$ there is a coset $t_h+\ab_0$ such that $\varphi^{-1}(\nss_h)=t_h+\ab_0$. We therefore have $\varepsilon_0\leq \mb{E}_{x\in \ab}f(x) F(\varphi(x)) = \mb{E}_{h\in H} \mb{E}_{y\in \ab'}f(t_h+y) F(\varphi(t_h+y))$, so there exists $h^*\in H$ such that, relabeling $t_{h^*}$ as $t'$, we have $\varepsilon_0\leq |\mb{E}_{y\in \ab'}f(t'+y) F(\varphi(t'+y))|$.

Note that since $\nss:=\nss_{h^*}$ is a toral nilspace, it is a filtered nilmanifold $(G_0/\Gamma_0,{G_0}_\bullet)$ of degree $k$, where the groups in ${G_0}_\bullet$ can be taken to be connected and simply-connected by Remark \ref{rem:toral&metric}, and the map $y\mapsto \varphi(t'+y)$ is a polynomial map $\ab_0\to G_0/\Gamma_0$. 
\end{proof}

\section{Extending nilsequences}\label{sec:extend}

\noindent It is a basic result of classical Fourier analysis that Fourier characters on closed subgroups of a compact abelian group can be extended to the full group (see \cite[Theorem 2.1.4]{RudinFAOG}). In this section prove a generalization of this in the case of nilsequences on finite abelian groups.

The goal of the argument is to extend the nilsequence $F_0\co\varphi_0$ from the subgroup $\ab_0\leq\ab$ given in \eqref{eq:mainstep1} to all of $\ab$. To this end, we begin with the following lemma which indicates what types of elementary steps can arise in the extension process.

\begin{lemma}\label{lem:ladder}
Let $\ab$ be a finite abelian group and let $\ab_0$ be a subgroup of $\ab$. Then there is a sequence of subgroups
\begin{equation}\label{eq:ladder}
\ab_0\leq \ab_1 \leq \cdots \leq \ab_t=\ab
\end{equation}
where $t\leq \log_2(|\ab|/|\ab_0|)$ and for every $i$ the group $\ab_i$ is an extension of the group $\ab_{i-1}$ of precisely one of the following two types:
\begin{enumerate}
\item (split case) there is a prime $p$ such that $\ab_i\cong \mb{Z}_p\oplus \ab_{i-1}$.
\item (non-split case) there is a prime $p$, a positive integer $d$ and a subgroup $K\leq \ab_{i-1}$ such that $\ab_{i-1}\cong (p\cdot \mb{Z}_{pd}) \oplus K \leq  \mb{Z}_{pd}\oplus K \cong \ab_i$.
\end{enumerate}
\end{lemma}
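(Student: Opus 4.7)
The plan is to induct on $[\ab:\ab_0]$. If $\ab_0=\ab$, take $t=0$. Otherwise, pick any prime $p$ dividing $[\ab:\ab_0]$ and construct an intermediate subgroup $\ab_0\lneq\ab_1\leq\ab$ of index $p$ over $\ab_0$ realizing one of the two advertised forms; then iterate by applying the inductive hypothesis to the pair $\ab_1\leq\ab$. Since each such step multiplies the index by a prime $\geq 2$, the bound $t\leq\log_2(|\ab|/|\ab_0|)$ is automatic.

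To construct $\ab_1$, take any $y\in\ab\setminus\ab_0$ with $py\in\ab_0$; such a $y$ exists because $\ab/\ab_0$ has an element of order $p$. Using the primary decompositions $\ab=\ab_{(p)}\oplus\ab_{(p')}$ and $\ab_0=(\ab_0)_{(p)}\oplus(\ab_0)_{(p')}$, together with the fact that multiplication by $p$ is a bijection on the $(p')$-primary components, one can absorb the $(p')$-component of $y$ into $\ab_0$ and assume $y\in\ab_{(p)}$. Set $a:=py\in(\ab_0)_{(p)}$.

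The argument now splits into two subcases, according to whether $a\in p(\ab_0)_{(p)}$. If $a=pz$ for some $z\in(\ab_0)_{(p)}$, replacing $y$ by $y':=y-z$ produces an element of order $p$ outside $\ab_0$, so $\langle y'\rangle\cap\ab_0=\{0\}$ and $\ab_1:=\ab_0+\langle y'\rangle\cong\ab_0\oplus\mb{Z}_p$, realizing the split case. Otherwise $a\notin p(\ab_0)_{(p)}$, and the key ingredient is the classical fact that in a finite abelian $p$-group, any element outside $pA$ generates a direct summand. Applying this to $a\in(\ab_0)_{(p)}$ produces a decomposition $(\ab_0)_{(p)}=\langle a\rangle\oplus K_p$; setting $K:=K_p\oplus(\ab_0)_{(p')}$ gives $\ab_0=\langle a\rangle\oplus K$. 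A short verification, using that $\bar y$ has order exactly $p$ in $\ab/\ab_0$, shows that $\langle y\rangle\cap K=\{0\}$ (any element of the intersection is forced into $\langle a\rangle\cap K=\{0\}$) and that $\mathrm{ord}(y)=p\cdot\mathrm{ord}(a)=:pd$. Hence $\ab_1=\langle y\rangle\oplus K\cong\mb{Z}_{pd}\oplus K$, with $\ab_0$ corresponding to $p\mb{Z}_{pd}\oplus K$, realizing the non-split case (and here $d\geq 2$, since $d=1$ would put us in the previous subcase).

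The main technical point is the direct-summand step in the non-split subcase, which rests on the standard fact that a cyclic subgroup $\langle a\rangle$ of a finite abelian $p$-group $A$ is pure (hence a direct summand) whenever $a\notin pA$; all remaining checks are routine.
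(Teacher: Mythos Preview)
Your inductive framework and the treatment of the split case are fine, but the non-split case rests on a false claim: it is \emph{not} true that in a finite abelian $p$-group $A$, every element $a\notin pA$ generates a pure (hence direct-summand) cyclic subgroup. Take $A=\mb{Z}_{p^3}\oplus\mb{Z}_p$ and $a=(p,1)$. Then $a\notin pA=p\mb{Z}_{p^3}\oplus\{0\}$ and $a$ has order $p^2$, yet $\langle a\rangle\cong\mb{Z}_{p^2}$ is not a direct summand: any complement would have order $p^2$, hence exponent at most $p^2$, forcing $\exp(A)\le p^2$, contrary to $\exp(A)=p^3$. (Purity already fails at level $2$: $pa=(p^2,0)\in p^2A$ while $p^2\langle a\rangle=0$.) This situation genuinely occurs in your setup: with $\ab_{(p)}=\mb{Z}_{p^3}\oplus\mb{Z}_{p^2}$, $(\ab_0)_{(p)}=\mb{Z}_{p^3}\oplus p\mb{Z}_{p^2}\cong\mb{Z}_{p^3}\oplus\mb{Z}_p$, and $y=(1,1)$, one gets $a=py=(p,p)$ corresponding to $(p,1)$ under the isomorphism, with $a\notin p(\ab_0)_{(p)}$; so the decomposition $(\ab_0)_{(p)}=\langle a\rangle\oplus K_p$ that your argument requires simply does not exist.

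The correct classical fact you may be recalling is that an element of \emph{maximal order} in a finite abelian $p$-group generates a direct summand; the weaker hypothesis $a\notin pA$ does not suffice. One clean repair, which is essentially the paper's (tersely stated) route, is to use the stacked-bases form of the structure theorem: for $\ab_0\le\ab$ finite abelian there is a decomposition $\ab=\bigoplus_j\langle b_j\rangle$ and integers $m_j\ge 1$ with $\ab_0=\bigoplus_j\langle m_jb_j\rangle$. The chain is then built by replacing one $m_j$ by $m_j/p$ at each step, which visibly realises the inclusion $(p\cdot\mb{Z}_{pd})\oplus K\le\mb{Z}_{pd}\oplus K$ (with $d$ the order of $m_jb_j$), and the bound on $t$ follows since each step removes one prime factor from $[\ab:\ab_0]$. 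Alternatively, you could keep your inductive scheme but choose $y$ more carefully so that the resulting $a$ has maximal order in $(\ab_0)_{(p)}$; this requires an extra argument but avoids the false purity claim.
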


\begin{proof}
By the fundamental theorem of finite abelian groups, we have
\begin{equation}
\ab_0 \cong \bigoplus_{i\in [s]} \mb{Z}_{p_i^{e_{i,1}}} \oplus \mb{Z}_{p_i^{e_{i,2}}} \oplus \cdots\oplus \mb{Z}_{p_i^{e_{i,\ell_i}}}
\end{equation}
where the primes $p_1,\ldots,p_s$ satisfy $p_i<p_{i+1}$ for every $
i\in [s-1]$ and the positive integers $e_{i,j}$ satisfy $e_{i,j}\leq e_{i,j+1}$.

As every prime $p_i$ with $i\in [s]$ divides $|\ab|$, by the fundamental theorem applied to $\ab$, for some additional primes $p_{s+1}<\cdots<p_r$ all greater than $p_s$ we have a similar decomposition
\begin{equation}
\ab \cong \bigoplus_{i\in [r]} \mb{Z}_{p_i^{e'_{i,1}}} \oplus \mb{Z}_{p_i^{e'_{i,2}}} \oplus \cdots\oplus \mb{Z}_{p_i^{e'_{i,\ell_i'}}}.
\end{equation}
It is then clear that we can proceed from $\ab_0$ to $\ab$ by a sequence of extensions as claimed.

Note that $t$ is just the number of prime factors (counting repetitions) of $|\ab|/|\ab_0|$, which is at most $\log_2(|\ab|/|\ab_0|)$.
\end{proof}
\noindent The idea of the argument is to show that the extension of a nilsequence on $\ab_{i-1}$ to $\ab_i$ can be obtained in each of the above two cases, and then iterate from $\ab_0$ up to $\ab$. 

We begin with the split case, in which the extension is trivial.

\begin{lemma}[Extending in the split case]\label{lem:splitcase}
Suppose that for some finite abelian group $\ab$ and some prime $p$ we have a morphism  $g$ from $\mc{D}_1(\ab)$ to a nilspace $\ns$. Then the map $\wt{g}(z,x):=g(x)$ is a morphism from $\mb{Z}_p\oplus\ab$ to $\ns$.
\end{lemma}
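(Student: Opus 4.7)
The plan is simply to observe that $\wt{g}$ factors as $\wt{g}=g\co\pi$, where $\pi:\mb{Z}_p\oplus\ab\to\ab$ denotes the projection onto the second coordinate. Since $\pi$ is a homomorphism between two abelian groups, it is automatically a morphism between the associated degree-$1$ nilspaces $\mc{D}_1(\mb{Z}_p\oplus\ab)\to\mc{D}_1(\ab)$: this follows directly from Definition \ref{def:hk-cubes}, because applying $\pi$ coordinatewise to an elementary Host--Kra cube $g^F$ on $\mc{D}_1(\mb{Z}_p\oplus\ab)$ yields the elementary Host--Kra cube $\pi(g)^F$ on $\mc{D}_1(\ab)$, and the cube sets of a degree-$1$ abelian-group nilspace are generated as subgroups of $G^{\db{n}}$ by such elementary cubes. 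Equivalently, one can unpack the definition and note that every $n$-cube on $\mc{D}_1(\mb{Z}_p\oplus\ab)$ has the affine form $v\mapsto c_0+\sum_{i=1}^n v_i c_i$, whose image under $\pi^{\db{n}}$ is a cube of the same form on $\mc{D}_1(\ab)$.

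Composing with the hypothesis that $g\in\hom(\mc{D}_1(\ab),\ns)$ then exhibits $\wt{g}=g\co\pi$ as a composition of two nilspace morphisms, and therefore as a morphism in $\hom(\mc{D}_1(\mb{Z}_p\oplus\ab),\ns)$, which is exactly the desired conclusion. There is no genuine obstacle here; the statement is a purely formal functoriality observation, recorded separately because it provides the trivial base step in the ``split'' direction of the inductive extension procedure fueled by Lemma \ref{lem:ladder}, with all the substantive work of the extension argument being deferred to the non-split case.
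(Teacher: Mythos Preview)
Your proof is correct and follows exactly the same approach as the paper: factor $\wt{g}=g\co\pi$ through the projection homomorphism $\pi:\mb{Z}_p\oplus\ab\to\ab$, and use that a homomorphism is a morphism of degree-$1$ group nilspaces. The paper's own proof is a single sentence to this effect; your additional unpacking of why homomorphisms preserve Host--Kra cubes is correct but more detail than the paper provides.
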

\begin{proof}
This is immediate since the projection $\mb{Z}_p\oplus\ab\to \ab$, $(z,x)\mapsto x$ is a homomorphism and therefore its composition with $g$ is indeed a nilspace morphism.
\end{proof}

\noindent In this split case we thus directly extend the polynomial map underlying the given nilsequence on $\ab_{i-1}$. In particular, the underlying nilmanifold $G/\Gamma$ remains the same in the extended nilsequence as in the original one. The non-split case, to which we now turn, is much less trivial. In fact, it is necessary in this case to modify the nilmanifold underlying the original nilsequence on $\ab_{i-1}$, in order to be able to extend the nilsequence to $\ab_i$. Let us pause to justify this with the following example. (Note that an almost-identical example appeared in \cite[Remark 4.3]{CGSS-bndtor}; here we revisit this and are able to give a much more elementary treatment.)

\begin{example}[A non-extendable polynomial map]\label{ex:nonext}
Fix any prime $p$ and let $\ab=\mb{Z}_{p^2}\times \mb{Z}_p$ and $\ab_0=(p\mb{Z}_{p^2})\times \mb{Z}_p\leq \ab$. Consider the morphism $g\in \hom(\mc{D}_1(\ab_0),\mc{D}_2(\mb{T}))$ defined by $g(px,y)=\frac{xy}{p}$, where the point $\frac{xy}{p}\in \mb{T}$ is obtained by taking any integer representatives of $x$ (mod $p^2$) and $y$ (mod $p$) and then taking $\frac{xy}{p}\mod 1$ (note that this is well-defined and indeed a quadratic map, i.e.\ a morphism into $\mc{D}_2(\mb{T})$ as claimed). We claim that there is no morphism $\wt{g}:\mc{D}_1(\ab)\to\mc{D}_2(\mb{T})$ such that $\wt{g}|_{\ab_0}=g$. Indeed, suppose for a contradiction that such a map $\wt{g}$ exists. Then, composing it with the natural surjective homomorphism $\mb{Z}^2\to \ab$ (i.e.\ reduction mod $p^2$ in the first coordinate and mod $p$ in the second), we obtain a morphism $h:\mc{D}_1(\mb{Z}^2)\to \mc{D}_2(\mb{T})$ which is $p^2$-periodic in the first coordinate, and $p$-periodic in the second coordinate. Moreover, from the assumption that $\wt{g}(px,y)=g(px,y)=\frac{xy}{p}\mod 1$, we deduce that for all integers $x,y$ we have $h(px,y)= \frac{xy}{p}$. We thus have the following system of constraints:
\begin{align}
 \forall x,y\in \mb{Z},  & \quad h(x+p^2,y)=h(x,y) \label{eq:circ1} \\ 
 \forall x,y\in \mb{Z}, & \quad h(x,y+p)=h(x,y) \label{eq:circ2} \\
 \forall x,y\in \mb{Z}, & \quad h(px,y)=\frac{xy}{p}.\label{eq:circ3}
\end{align}
On the other hand, since $h$ is a quadratic map into $\mb{T}$, it has a Taylor expansion of the form $h(x,y)=a_1+a_2x+a_3y+a_4\binom{x}{2}+a_5\binom{y}{2}+a_6xy$ for some $a_i\in \mb{T}$. Now note that from equation \eqref{eq:circ3} evaluated successively at $(x,y)=(0,0),(0,1),(0,2)$, we deduce that $a_1=0$, then $a_3=0$, and then $a_5=0$. Hence  $h(x,y)=a_2x+a_4\binom{x}{2}+a_6xy$. Now, applying this in \eqref{eq:circ2} we deduce that $a_6xp=h(x,y+p)-h(x,y)=0$ for all $x$, whence $a_6p=0$. Then, using this back in \eqref{eq:circ3} we obtain that $a_2px+a_4\binom{px}{2}=\frac{xy}{p}$, which, setting $x=1$, gives us the absurd claim that the constant $a_2p+a_4\binom{p}{2}$ equals $\frac{y}{p}$ for all $y$. This proves that the extension $\wt{g}$ cannot exist.\footnote{Note that we did not use equation \eqref{eq:circ1} to derive the contradiction, only \eqref{eq:circ2} and \eqref{eq:circ3}. This is not concerning because it can be checked that equation \eqref{eq:circ1} is actually redundant; more precisely, this equation can be deduced from equation \eqref{eq:circ3} by evaluating the latter further at $(1,0)$ and $(2,0)$ and deducing from this that $a_2$, $a_4$ and $a_6$ must be such that $h$ is forced to be $p^2$-periodic in $x$.}
\end{example}

\noindent The above example shows that, to obtain the desired nilsequence extension in the non-split case, we cannot in general conserve the  nilmanifold underlying the nilsequence. We shall deal with this difficulty by proving and applying a result which enables us to ``lift" a multivariable polynomial nilsequence to a linear multivariable nilsequence, Proposition \ref{prop:exte-non-split} below. In the univariable case (i.e.\ for nilsequences defined on $\mb{Z}$, as opposed to the multivariable case of $\mb{Z}^r$ with $r>1$), such a lifting result is known from work of Green, Tao and Ziegler, namely Proposition C.2 in \cite{GTZ} (see also \cite[Ch.\ 14, \S 2]{HKbook}).

Given a filtration $G_\bullet$ and $j\in\mb{Z}_{\ge0}$, we denote by $G_\bullet^{+j}$ the shifted filtration defined by $G_i^{+j}:=G_{i+j}$ for $i\geq 0$. Recall that we denote by $\hom(\ns,\nss)$ the set of morphisms from a nilspace $\ns$ to a nilspace $\nss$. When $\nss$ is the group nilspace associated with a filtered group $(G,G_\bullet)$, we abbreviate the notation to $\hom(\ns,G_\bullet)$.

In the proof of Proposition \ref{prop:exte-non-split} below, we will use the following semidirect product construction to define the appropriate nilmanifold for extending nilsequences. 

\begin{lemma}\label{lem:generalized-shift}
Let $k\in \mb{Z}_{\geq 0}, r\in \mb{N}$, let $(G,G_\bullet)$ be a prefiltered Lie group of degree $k$ where for all $i\in [k]$, the group $G_i$ is connected and simply-connected. Then $\mf{G}:=\hom(\mc{D}_1(\mb{Z}^r),G_\bullet)$ is a connected and simply-connected Lie group which is isomorphic to $\hom(\mc{D}_1(\mb{R}^r),G_\bullet)$, with isomorphism given by $g\mapsto g|_{\mb{Z}^r}$. In particular, letting $\vartheta:\mb{R}^r\to \aut(\mf{G})$ be the smooth homomorphism defined by $\vartheta_{\underline{x}}(g)=g(\cdot+{\underline{x}})$, we have that $\mf{G}\rtimes_\vartheta \mb{R}^r$ is a well-defined connected and simply-connected Lie group.
\end{lemma}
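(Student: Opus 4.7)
The plan is to identify $\mathfrak{G}$ explicitly via a non-commutative Taylor expansion, read off its Lie group structure from that identification, and then use the same expansion both to construct the real-parameter extension and to compute the translation action.

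First I would invoke the standard Leibman-type Taylor expansion for polynomial maps into a prefiltered group, the same tool already used in the proof of Lemma \ref{lem:fact-of-morphisms}: fixing an ordering of the multi-indices $\underline{w}\in\mb{Z}_{\ge 0}^r$ with $|\underline{w}|:=w_1+\cdots+w_r\le k$, every $g\in\mathfrak{G}$ admits a unique representation
\[
g(\underline{v})\;=\;\prod_{\underline{w}:\,|\underline{w}|\le k}h_{\underline{w}}^{\binom{\underline{v}}{\underline{w}}},\qquad h_{\underline{w}}\in G_{|\underline{w}|},
\]
where $\binom{\underline{v}}{\underline{w}}=\prod_i\binom{v_i}{w_i}$ and the product is taken in the chosen order. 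This gives a bijection $\mathfrak{G}\leftrightarrow M:=\prod_{\underline{w}}G_{|\underline{w}|}$, and $M$ is a connected, simply-connected smooth manifold because each factor with $|\underline{w}|\ge 1$ is such by hypothesis (the constant-term factor is handled exactly as in Lemma \ref{lem:fact-of-morphisms}). Transferring this structure, I then verify that pointwise multiplication of morphisms corresponds to a smooth group law on $M$: using the Baker--Campbell--Hausdorff formula inside each simply-connected nilpotent $G_i$, together with the combinatorial identity $\binom{\underline{v}}{\underline{w}}\binom{\underline{v}}{\underline{w}'}=\sum_{\underline{w}''}c^{\underline{w}''}_{\underline{w},\underline{w}'}\binom{\underline{v}}{\underline{w}''}$, one puts the product of two Taylor expansions back into standard form with new coefficients depending polynomially (hence smoothly) on the originals; inversion is handled the same way. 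Thus $\mathfrak{G}$ is a connected, simply-connected Lie group.

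Next I would prove the restriction isomorphism $\hom(\mc{D}_1(\mb{R}^r),G_\bullet)\to\mathfrak{G}$. Injectivity is immediate, since the Taylor coefficients $h_{\underline{w}}$ are determined by the values on $\mb{Z}^r$ via finite differencing. For surjectivity, given $g\in\mathfrak{G}$ with coefficients $h_{\underline{w}}$, I define $\tilde g:\mb{R}^r\to G$ by the same product formula, where $\binom{\underline{v}}{\underline{w}}$ is now the real polynomial and real powers are $h_{\underline{w}}^{\alpha}:=\exp(\alpha\log h_{\underline{w}})$ for $|\underline{w}|\ge 1$ (well-defined because $G_{|\underline{w}|}$ is simply-connected nilpotent, so $\exp$ is a diffeomorphism). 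A direct iterated-derivative check shows $\tilde g\in\hom(\mc{D}_1(\mb{R}^r),G_\bullet)$ and $\tilde g|_{\mb{Z}^r}=g$; smoothness of restriction and its inverse is transparent in Taylor coordinates, so this is a Lie group isomorphism.

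Finally, for $\underline{x}\in\mb{R}^r$ the translation $\vartheta_{\underline{x}}(g)=g(\cdot+\underline{x})$ is itself a morphism (precomposition with the affine map $\underline{v}\mapsto\underline{v}+\underline{x}$ preserves the polynomial-morphism property) and clearly a group automorphism of $\mathfrak{G}$. Using $\binom{\underline{v}+\underline{x}}{\underline{w}}=\sum_{\underline{w}'\le\underline{w}}\binom{\underline{x}}{\underline{w}-\underline{w}'}\binom{\underline{v}}{\underline{w}'}$ and the same BCH-based rewriting as in step one, the action of $\vartheta_{\underline{x}}$ on Taylor coefficients becomes a formula polynomial in $\underline{x}$ and smooth in the coefficients, giving a smooth homomorphism $\vartheta:\mb{R}^r\to\aut(\mathfrak{G})$. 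Therefore $\mathfrak{G}\rtimes_\vartheta\mb{R}^r$ is a well-defined Lie group, and the split short exact sequence $\mathfrak{G}\to\mathfrak{G}\rtimes_\vartheta\mb{R}^r\to\mb{R}^r$ forces the semidirect product to be connected and simply-connected via the long exact sequence of homotopy groups. The main technical obstacle is the BCH-type rewriting in the first step, together with the verification in the second step that the real-parameter formula really yields a morphism of degree $\le k$; once the Taylor coordinates are in place and real powers are defined through $\exp$, the remaining parts of the statement reduce to bookkeeping.
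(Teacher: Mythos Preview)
Your proposal is correct and follows essentially the same route as the paper: both arguments hinge on the Taylor expansion identifying $\mathfrak{G}$ with $\prod_{|\underline{w}|\le k}G_{|\underline{w}|}$ as a manifold, transfer the Lie-group structure through that identification, and then build the real extension by replacing integer binomials with their real-polynomial analogues via $h^{\alpha}=\exp(\alpha\log h)$. The minor tactical differences are that you argue injectivity of the restriction map by reading off Taylor coefficients through finite differences, whereas the paper does an induction on $k$ quotienting by $G_k$; and you are more explicit about BCH for smoothness of multiplication and about the homotopy long exact sequence for the semidirect product, points the paper leaves implicit.
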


The proof will use the following Taylor expansion.

\begin{lemma}\label{lem:RealTaylor}
Let $(G,G_\bullet)$ be a prefiltered Lie group of degree $k$ where for all $i\in [k]$, the Lie group $G_i$ is connected and simply-connected. Then any map $g\in \hom(\mc{D}_1(\mb{R}^r),G_\bullet)$ has a Taylor expansion of the form \begin{equation}\label{eq:taylor-cont}g(\underline{x})=\prod_{\underline{j}\in J}h_{\underline{j}}^{\binom{\underline{x}}{\underline{j}}}
\end{equation}
for all $\underline{x}\in \mb{R}^r$, where $J:=\{\underline{n}\in \mb{Z}^r_{\ge0}: |n|:= n_1+\cdots+n_r\le k\}$, $\binom{\underline{x}}{\underline{j}}:=\binom{x_1}{j_1}\cdots\binom{x_r}{j_r}$, and for $\underline{j}\in J$ we have $h_{\underline{j}}\in G_{|j|}$.
\end{lemma}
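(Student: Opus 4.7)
The plan is to reduce to the integer-lattice Taylor expansion, which we may take as a standard fact (Leibman's theorem for polynomial sequences into filtered nilpotent groups, already used implicitly in Lemma~\ref{lem:fact-of-morphisms}). The continuous input is that each $G_i$ with $i\geq 1$ is a connected, simply-connected nilpotent Lie group, so $\exp\colon\mathfrak{g}_i\to G_i$ is a diffeomorphism, and for every $h\in G_i$ and $t\in\mb{R}$ we have a well-defined power $h^t:=\exp(t\log h)\in G_i$, depending smoothly on $(h,t)$ and satisfying $h^{s+t}=h^sh^t$.

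First I would apply the integer Taylor expansion to the restriction $g|_{\mb{Z}^r}\in\hom(\mc{D}_1(\mb{Z}^r),G_\bullet)$ to obtain coefficients $h_{\underline{j}}\in G_{|j|}$ with $g(\underline{n})=\prod_{\underline{j}\in J}h_{\underline{j}}^{\binom{\underline{n}}{\underline{j}}}$ for all $\underline{n}\in\mb{Z}^r$. Using the real-power convention above, I would then define the candidate extension $\wt g(\underline{x}):=\prod_{\underline{j}\in J}h_{\underline{j}}^{\binom{\underline{x}}{\underline{j}}}$ for $\underline{x}\in\mb{R}^r$; this is smooth and coincides with $g$ on $\mb{Z}^r$. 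To see that $\wt g\in\hom(\mc{D}_1(\mb{R}^r),G_\bullet)$, observe that each factor $\underline{x}\mapsto h_{\underline{j}}^{\binom{\underline{x}}{\underline{j}}}$ is the composition of the scalar polynomial $\binom{\underline{x}}{\underline{j}}$ of degree $|j|$ with the one-parameter subgroup $t\mapsto h_{\underline{j}}^t\leq G_{|j|}$ and is therefore a morphism into the shifted filtration $G_\bullet^{+|j|}\subseteq G_\bullet$; the pointwise product of such morphisms is again a morphism by the standard closure-under-products statement for polynomial maps into a filtered nilpotent Lie group (the real-parameter form of Leibman's theorem).

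It remains to show $\wt g=g$. The product $\phi:=g\cdot\wt g^{-1}$ is a morphism in $\hom(\mc{D}_1(\mb{R}^r),G_\bullet)$ that vanishes on $\mb{Z}^r$, so the lemma reduces to the claim that any such $\phi$ is identically $e$. I would prove this by induction on $k$. The base case $k=0$ is immediate: the $1$-cube axiom forces $\phi(\underline{x})^{-1}\phi(\underline{x}+v)\in G_1=\{e\}$, so $\phi$ is constant, and $\phi(\underline{0})=e$. For $k\geq 1$, composing $\phi$ with the quotient $\pi\colon G\to G/G_k$ (whose induced prefiltration has degree $k-1$) yields a morphism vanishing on $\mb{Z}^r$, which is identically $e$ by the inductive hypothesis; hence $\phi$ takes values in the abelian group $G_k$, so $\phi\colon\mb{R}^r\to G_k$ is a polynomial map of degree $\leq k$ into an abelian Lie group whose Taylor coefficients are recovered by finite differences on $\mb{Z}^r$ and therefore all vanish, yielding $\phi\equiv e$.

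The most delicate step is the verification that $\wt g$ is itself a morphism: it depends on the real-parameter Leibman closure theorem asserting that the set of polynomial maps $\mc{D}_1(\mb{R}^r)\to G_\bullet$ is closed under pointwise multiplication. Once this closure is in hand, the remainder of the argument -- matching coefficients on the lattice, extending by real powers, and bootstrapping uniqueness through the central quotient by $G_k$ -- is essentially routine.
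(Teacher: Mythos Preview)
Your proof is correct but follows a different route from the paper's. The paper argues by induction on $k$ working entirely in the real setting: it passes to the quotient $G/G_k$, applies the inductive hypothesis to obtain a Taylor expansion of $gG_k$, lifts the resulting one-parameter subgroups back to $G$ to form an approximant $g'$, and then handles the $G_k\cong\mb{R}^d$-valued discrepancy $g-g'$ by invoking the abelian case (via \cite[Lemma 3.6]{CGSS-doucos}). You instead first restrict to $\mb{Z}^r$ and import the integer Taylor expansion as a black box to produce the coefficients $h_{\underline{j}}$, and then reduce everything to a rigidity statement: any morphism in $\hom(\mc{D}_1(\mb{R}^r),G_\bullet)$ vanishing on $\mb{Z}^r$ is trivial. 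This rigidity is precisely what the paper establishes separately, as the injectivity half of Lemma~\ref{lem:generalized-shift}, \emph{using} the real Taylor expansion you are proving; so you have effectively folded that later argument into the present proof. Your approach is more economical if one is happy to cite the integer Leibman theorem, while the paper's is more self-contained over $\mb{R}$; both share the same inductive skeleton (quotient by $G_k$, reduce to the abelian top layer) and both implicitly rely on continuity to conclude that a degree-$\leq k$ polynomial map $\mb{R}^r\to\mb{R}^d$ vanishing on the lattice $\mb{Z}^r$ is identically zero.
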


\begin{proof}
Note that the powers $h_{\underline{j}}^{\binom{\underline{x}}{\underline{j}}}$ in \eqref{eq:taylor-cont} are well-defined functions of $x$, since the connectedness and simple-connectedness enables us\footnote{In a more general setting (without simple connectedness), an expression as in the right side of \eqref{eq:taylor-cont} can be defined by choosing (non-uniquely) 1-parameter subgroups $h_{\underline{j}}$, but we do not need to go in this direction in this paper.} to define $h_{\underline{j}}^t:=\exp(t\log h_{\underline{j}})$ for any $t\in \mb{R}$. 

First, note that letting $i\in[k]$ and $h^t$ be a 1-parameter subgroup in $G_i$, we have that $h^{\binom{\underline{x}}{\underline{j}}}$ is a continuous morphism in $\hom(\mc{D}_1(\mb{R}^r),G_\bullet)$. By \cite[Theorem 2.2.14]{Cand:Notes1} and the fact that $\poly(\mc{D}_1(\mb{R}^r),G_\bullet)$ is a group under point-wise multiplication, we have that any expression of the form \eqref{eq:taylor-cont} is an element of $\hom(\mc{D}_1(\mb{R}^r),G_\bullet)$.

To prove now that any $g\in \hom(\mc{D}_1(\mb{R}^r),G_\bullet)$ has one such expression, we proceed by induction on $k$. The base case $k=0$ is trivial. Assume then that the claim holds for degree up to $k-1$, and let $g\in \hom(\mc{D}_1(\mb{R}^r),G_\bullet)$. Then $gG_k\in \hom(\mc{D}_1(\mb{R}^r),G_\bullet/G_k)$ and by induction there exists 1-parameter subgroups $\wt{h}_{\underline{j}}^t$ in $G_{|j|}/G_k$ such that $gG_k = \prod_{\underline{j}\in \wt{J}}\wt{h}_{\underline{j}}^{\binom{\underline{x}}{\underline{j}}}$ where $\wt{J}:=\{\underline{n}\in \mb{Z}^r_{\ge0}:|n|\le k-1\}$. By \cite[Lemma 4.19]{H&M-ProLie}, for every $\underline{j}\in \wt{J}$ there is a 1-parameter subgroup  $h_{\underline{j}}^t$ such that $h_{\underline{j}}^tG_k=\wt{h}_{\underline{j}}^t$. Thus, letting $g':=\prod_{\underline{j}\in \wt{J}}h_{\underline{j}}^{\binom{\underline{x}}{\underline{j}}}$ we have that $g'\in \hom(\mc{D}_1(\mb{R}^r),G_\bullet)$ and that $g'G_k=gG_k$. Hence, the difference $g-g'$ is a map in $\hom(\mc{D}_1(\mb{R}^r),\mc{D}_k(G_k))$. From the assumptions it follows that $G_k=\mb{R}^d$ for some $d\geq 0$, and thus by \cite[Lemma 3.6]{CGSS-doucos} the map $g-g'$ has a Taylor expansion which we can add to that of $g'$ to complete the Taylor expansion of $g=g'+(g-g')$.    
\end{proof}

\begin{proof}[Proof of Lemma \ref{lem:generalized-shift}]
It follows from the assumptions that $G_k$ is a connected and simply-connected abelian Lie group, and is therefore isomorphic to $\mb{R}^d$ for some $d\in \mb{Z}_{\geq 0}$. Moreover, as the exponential map for $G$ is a diffeomorphism  (i.e.\ a $C^\infty$ bijection whose inverse is also $C^\infty$), it follows that $G/G_k$ is also connected and simply-connected. Therefore, for each $i\in[k]$ we have that $(G/G_i,G_\bullet/G_i)$ is a degree-$(i-1)$ connected and simply-connected filtered Lie group and every group in this filtration is connected.

Let $\psi:\hom(\mc{D}_1(\mb{R}^r),G_\bullet)\to \hom(\mc{D}_1(\mb{Z}^r),G_\bullet)$ be the map $g\mapsto g|_{\mb{Z}^r}$. This is clearly a continuous homomorphism, and we claim that it is bijective. 

To see that $\ker(\psi)$ is trivial, let $g\in \hom(\mc{D}_1(\mb{R}^r),G_\bullet)$ be such that $g|_{\mb{Z}^r}=\id$, and let us prove by induction on the degree $k$ that then $g=\id$. For $k=0$ the map $g$ must be a constant (as $G_1$ is trivial) and the claim is clear. Suppose then that $k\geq 1$ and the claim holds for $k-1$. Then $gG_k\in \hom(\mc{D}_1(\mb{R}^r),G_\bullet/G_k)$ is a polynomial map such that $(gG_k)|_{\mb{Z}^r}=G_k$. By induction we know that $(gG_k)=G_k$ so in particular $g\in \hom(\mc{D}_1(\mb{R}^r),\mc{D}_k(G_k))$. Since $G_k=\mb{R}^d$, by \cite[Lemma 3.6]{CGSS-doucos} it  follows that if $g|_{\mb{Z}^r}=\id$ then then all Taylor coefficients of $g$ must be 0, so $g=\id$.

To see that $\psi$ is surjective, note that any element $f\in\mf{G}$ has a Taylor expansion $f=\prod_{\underline{j}\in \wt{J}}h_{\underline{j}}^{\binom{\underline{x}}{\underline{j}}}$ where $h_{\underline{j}}\in G_{|\underline{j}|}$ by \cite[Lemma B.9]{GTZ}. As all groups $G_i$ for $i\in[k]$ are nilpotent and connected, there exists a (unique) 1-parameter subgroups $h_{\underline{j}}^t$ such that $h_{\underline{j}}^1=h_{\underline{j}}$. But then letting $g:=\prod_{\underline{j}\in \wt{J}}h_{\underline{j}}^{\binom{\underline{x}}{\underline{j}}}$ (which is an element of $\hom(\mc{D}_1(\mb{R}^r),G_\bullet)$ by Lemma \ref{lem:RealTaylor}), where abusing the notation now $h_{\underline{j}}^t$ are the aforementioned 1-parameter subgroups, we clearly have that $g|_{\mb{Z}^r}=f$.

To see that $\hom(\mc{D}_1(\mb{Z}^r),G_\bullet)$ is a Lie group note that the uniqueness of the Taylor expansion induces a diffeomorphism from $\hom(\mc{D}_1(\mb{Z}^r),G_\bullet)$ to $ \prod_{\underline{j}\in J}G_{|\underline{j}|}$ that sends each $g\in \hom(\mc{D}_1(\mb{Z}^r), G_\bullet)$ to the $J$-tuple of its Taylor coefficients. Indeed, the map that sends an element of $ \prod_{\underline{j}\in J}G_{|\underline{j}|}$ to the corresponding polynomial is clearly $C^\infty$, and the fact that Taylor coefficients of any $g\in \hom(\mc{D}_1(\mb{Z}^r),G_\bullet)$ can be computed as products of evaluations of $g$ in the set $J$ makes the map $C^\infty$ as well. Thus, we may identify the group $\hom(\mc{D}_1(\mb{Z}^r), G_\bullet)$ with $ \prod_{\underline{j}\in J}G_{|\underline{j}|}$ with a group operation inherited from that of $\hom(\mc{D}_1(\mb{Z}^r), G_\bullet)$. This proves that $\hom(\mc{D}_1(\mb{Z}^r), G_\bullet)$ is a Lie group. Using that all terms in the filtration $G_\bullet$ are connected and simply-connected, it follows from the identification of $\hom(\mc{D}_1(\mb{Z}^r),G_\bullet)$ with $ \prod_{\underline{j}\in J}G_{|\underline{j}|}$ that $\mf{G}$ is also connected and simply-connected.

To complete the proof, note that $\vartheta:\mb{R}^r\to \aut(\mf{G})$ is now simply the shift operator defined on $\hom(\mc{D}_1(\mb{R}^r),G_\bullet)$ which is differentiable. To see this, note that we can use the identification of $\hom(\mc{D}_1(\mb{Z}^r),G_\bullet)$ with $\prod_{\underline{j}\in J}G_{|\underline{j}|}$. Then $\vartheta_{\underline{x}}$ corresponds to sending any element $(g_{\underline{j}})_{\underline{j}\in J}\in \prod_{\underline{j}\in J}G_{|\underline{j}|}$ to the polynomial $\underline{z}\mapsto \prod_{\underline{j}\in J}g_{\underline{j}}^{\binom{\underline{z}+\underline{x}}{\underline{j}}}$. As this is clearly $C^\infty$, so is $\vartheta$. Therefore $\mf{G}\rtimes_\vartheta \mb{R}^r$ is a well-defined connected and simply-connected Lie group.
\end{proof}

We can now present the main nilmanifold construction that we shall use in Proposition \ref{lem:growing-nilmanifold}.

\begin{proposition}\label{lem:growing-nilmanifold}
Let $k,r\in \mb{N}$ and let $(G,G_\bullet)$ be a degree-$k$ prefiltration of connected, simply-connected Lie groups $(G_i)_{i\in[k]}$. Let $H:=\hom(\mc{D}_1(\mb{Z}^r),G_\bullet)\rtimes_\vartheta \mb{R}^{r}$, let  $H_0=H_1=H$ and for $i\geq 2$ let $H_i:=\hom(\mc{D}_1(\mb{Z}^r),G_\bullet^{+(i-1)})\rtimes_\vartheta \{\underline{0}\}$. Then $H_\bullet=(H_i)_{i\geq 0}$ is a degree-$(k+1)$ filtration of connected and simply-connected Lie groups. Moreover, letting $\Gamma$ be a discrete cocompact subgroup of $G$ such that $G_i$ is rational in $G$ relative to $\Gamma$ for all $i$, letting $\Gamma_\bullet$ be the induced filtration, and letting $\Lambda:=\hom(\mc{D}_1(\mb{Z}^r),\Gamma_\bullet)\rtimes_\vartheta \mb{Z}^{r}$, we have that $H/\Lambda$ is a filtered nilmanifold of degree $k+1$.
\end{proposition}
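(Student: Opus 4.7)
The plan is to verify, in order, that (a) each $H_i$ is a connected, simply-connected Lie group; (b) $(H_i)_{i\ge 0}$ satisfies the nesting, commutator, and degree axioms of a degree-$(k+1)$ filtration; and (c) $\Lambda$ is discrete in $H$ with each $\Lambda\cap H_i$ cocompact in $H_i$. Throughout, write $\mathfrak{G}_j:=\hom(\mc{D}_1(\mb{Z}^r),G_\bullet^{+j})$, so that $H_i=\mathfrak{G}_{i-1}\rtimes_\vartheta\{\underline{0}\}$ for $i\ge 2$ and $H_0=H_1=H=\mathfrak{G}_0\rtimes_\vartheta\mb{R}^r$. For (a), each shifted prefiltration $G_\bullet^{+j}$ retains the connectedness and simple-connectedness of its Lie-group terms (as they are terms of the original $G_\bullet$), so Lemma~\ref{lem:generalized-shift} applied to $G_\bullet^{+j}$ gives that $\mathfrak{G}_j$ is a connected, simply-connected Lie group; each $H_i$ inherits this, being either $\mathfrak{G}_{i-1}$ itself or the semidirect product of $\mathfrak{G}_0$ with the connected, simply-connected $\mb{R}^r$.

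The main technical step is the commutator axiom $[H_i,H_j]\subseteq H_{i+j}$. Starting from the explicit semidirect-product commutator
\[
 [(g_1,x_1),(g_2,x_2)] = \bigl(g_1\,\vartheta_{x_1}(g_2)\,\vartheta_{x_2}(g_1^{-1})\,g_2^{-1},\,\underline{0}\bigr),
\]
I would decompose its first coordinate using two identities, both proved by Taylor expansion (Lemma~\ref{lem:RealTaylor}) combined with Baker--Campbell--Hausdorff in the connected, simply-connected nilpotent Lie groups $G_\ell$: (i) the \emph{derivative identity} $\vartheta_x(g)\,g^{-1}\in\mathfrak{G}_{\ell+1}$ whenever $g\in\mathfrak{G}_\ell$ and $x\in\mb{R}^r$, which follows from the Vandermonde expansion $\binom{\underline{z}+\underline{x}}{\underline{k}}=\sum_{\underline{m}}\binom{\underline{x}}{\underline{k}-\underline{m}}\binom{\underline{z}}{\underline{m}}$; and (ii) a Leibman-type \emph{commutator depth identity} $[\mathfrak{G}_a,\mathfrak{G}_b]\subseteq \mathfrak{G}_{a+b+1}$, obtained by tracking how commutators of Taylor coefficients interact with the Vandermonde expansions of products of binomial coefficients. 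After rewriting the first coordinate as a product of such derivatives and commutators, with any ordering corrections themselves lying in deeper $\mathfrak{G}_\ell$'s, one lands in $\mathfrak{G}_{i+j-1}$ as required. Nesting $H_{i+1}\le H_i$ is immediate since $G_\bullet^{+i}$ is termwise contained in $G_\bullet^{+(i-1)}$, and the degree condition $H_{k+2}=\{\id\}$ follows because $G_\bullet^{+(k+1)}$ is termwise trivial.

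For (c), the Taylor-coefficient diffeomorphism of Lemma~\ref{lem:generalized-shift} identifies $\mathfrak{G}_j$ with the product $\prod_{\underline{k}}G_{|\underline{k}|+j}$, under which $\hom(\mc{D}_1(\mb{Z}^r),\Gamma_\bullet^{+j})$ corresponds to $\prod_{\underline{k}}(\Gamma\cap G_{|\underline{k}|+j})$. By rationality of each $G_{|\underline{k}|+j}$ relative to $\Gamma$, each factor is discrete cocompact in $G_{|\underline{k}|+j}$; combined with $\mb{Z}^r$ being discrete cocompact in $\mb{R}^r$, this yields both that $\Lambda$ is discrete cocompact in $H$ and that $\Lambda\cap H_i$ is cocompact in $H_i$ for every $i$. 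I expect the hardest step to be identity (ii): one must show that the extra filtration depth contributed by the Vandermonde cross-terms in the Taylor expansion of a polynomial commutator is enough to land in $\mathfrak{G}_{a+b+1}$ rather than merely $\mathfrak{G}_{a+b}$, and it is here that the full connectedness and simple-connectedness hypotheses on the $G_\ell$ become essential for the BCH-based argument.
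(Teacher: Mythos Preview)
Your identity (ii), $[\mathfrak{G}_a,\mathfrak{G}_b]\subseteq \mathfrak{G}_{a+b+1}$, is false for general prefiltrations $G_\bullet$ as allowed by the hypothesis; only the standard Leibman--Host--Kra inclusion $[\mathfrak{G}_a,\mathfrak{G}_b]\subseteq \mathfrak{G}_{a+b}$ holds (this is exactly Lemma~\ref{lem:thm6-hk}). The extra unit of depth you hope to extract from Vandermonde cross-terms cannot appear, because the \emph{constant} Taylor coefficients carry no cross-term: if $g\in\mathfrak{G}_a$ has zeroth coefficient $h\in G_a$ and $g'\in\mathfrak{G}_b$ has zeroth coefficient $h'\in G_b$, then $[g,g'](0)=[h,h']\in G_{a+b}$, and a constant in $G_{a+b}$ lies in $\mathfrak{G}_{a+b}$ but need not lie in $\mathfrak{G}_{a+b+1}$. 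Concretely, take $r=1$, $k=2$, and $G$ the real Heisenberg group with its lower-central filtration. Then $\mathfrak{G}_1=\{n\mapsto h\,z^n:h\in G,\ z\in Z(G)\}$, and for two such elements $[g,g'](n)=[h,h']$ is a typically nonzero constant in $Z(G)=G_2$; this lies in $\mathfrak{G}_2$ (constants in $G_2$) but not in $\mathfrak{G}_3=\{0\}$. Hence $[H_2,H_2]\neq\{\mathrm{id}\}=H_4$, so $H_\bullet$ as defined is not a filtration at all.

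This is not only a gap in your argument: the same Heisenberg example falsifies the proposition as stated, and the paper's own proof shares the gap (it invokes Lemma~\ref{lem:thm6-hk} to conclude, after reindexing, that its $(\mathfrak{G}_i)_{i\ge 0}$ is a filtration, but the lemma gives only $[\mathfrak{G}_i,\mathfrak{G}_j]\subseteq\mathfrak{G}_{i+j-1}$, one step short). Note, however, that your identity (ii) \emph{does} hold whenever the input prefiltration satisfies the stronger condition $[G_a,G_b]\subseteq G_{a+b+1}$ for all $a,b\ge 0$; this is exactly the case for a shifted prefiltration $G_\bullet^{+1}$ of a genuine filtration $G_\bullet$, since then $[(G_\bullet^{+1})_a,(G_\bullet^{+1})_b]=[G_{a+1},G_{b+1}]\subseteq G_{a+b+2}=(G_\bullet^{+1})_{a+b+1}$. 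And it is only this case that is actually used downstream: in Proposition~\ref{prop:exte-non-split} the degree-$k$ nilmanifold $H/\Lambda$ comes from applying the construction to $G_\bullet^{+1}$. Under that stronger hypothesis your plan for (b) goes through, and your parts (a) and (c) are correct as written and match the paper's arguments.
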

To prove this we shall use the following generalization of Theorem 6 from \cite[p.\ 238]{HKbook}.

\begin{lemma}\label{lem:thm6-hk}
Let\footnote{To align better with the notation of the proof of \cite[Theorem 6]{HKbook}, in this theorem the degree of the filtration will be $s$ instead of the usual $k$. The symbol $k$ will be used in the proof for other matters.} $s,r\in \mb{N}$ and let $(G,G_\bullet)$ be a prefiltration of connected, simply-connected Lie groups $(G_i)_{i\in[s]}$. Then for every $i\ge 0$, the set $\hom(\mc{D}_1(\mb{Z}^r),G_\bullet^{+i})$ with point-wise multiplication is a subgroup of $G^{\mb{Z}^r}$. Furthermore, these groups form a prefiltration and for every $\underline{x}\in \mb{R}^r$ we have that $\partial_{\underline{x}}(\hom(\mc{D}_1(\mb{Z}^r),G_\bullet^{+i}))\subset \hom(\mc{D}_1(\mb{Z}^r),G_\bullet^{+(i+1)})$ where $\partial_{\underline{x}}(g):=g(\cdot + \underline{x})g^{-1}$.
\end{lemma}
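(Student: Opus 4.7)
The plan is to reduce the first two assertions to standard nilspace-morphism theory applied with the shifted filtration $G_\bullet^{+i}$, and then to handle the key new ingredient in the third assertion---namely that $\underline{x}$ is allowed to be a \emph{real} vector---by extending polynomial maps from $\mb{Z}^r$ to $\mb{R}^r$ via Lemma \ref{lem:generalized-shift} and analyzing the resulting Taylor expansions.

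First, for each $i\ge 0$, the pair $(G, G_\bullet^{+i})$ is a prefiltered Lie group of degree at most $s$, and the set $\hom(\mc{D}_1(\mb{Z}^r), G_\bullet^{+i})$ is the set of morphisms between two group nilspaces. By \cite[Theorem 2.2.14]{Cand:Notes1} (already used in Lemma \ref{lem:RealTaylor}), this set is a group under pointwise multiplication. The commutator containment, which makes the shifted groups a prefiltration, can then be obtained by the standard polynomial-commutator argument: expanding two morphisms $f\in \hom(\mc{D}_1(\mb{Z}^r), G_\bullet^{+i})$ and $g\in \hom(\mc{D}_1(\mb{Z}^r), G_\bullet^{+j})$ in their Taylor forms given by (the $\mb{Z}^r$-analogue of) Lemma \ref{lem:RealTaylor}, then using bilinearity of commutators modulo deeper filtration terms together with the filtration property $[G_a, G_b]\subset G_{a+b}$.

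For the third claim, given $g\in \hom(\mc{D}_1(\mb{Z}^r), G_\bullet^{+i})$, I first invoke Lemma \ref{lem:generalized-shift} to extend $g$ to the (unique) $\tilde{g}\in \hom(\mc{D}_1(\mb{R}^r), G_\bullet^{+i})$. Since translation on $\mb{R}^r$ is an automorphism of the nilspace $\mc{D}_1(\mb{R}^r)$, the shifted map $\tilde{g}(\cdot+\underline{x})$ is again in $\hom(\mc{D}_1(\mb{R}^r), G_\bullet^{+i})$. By the (real-variable version of the) first claim, the product $\partial_{\underline{x}}\tilde{g} := \tilde{g}(\cdot+\underline{x})\tilde{g}(\cdot)^{-1}$ is therefore a well-defined element of $\hom(\mc{D}_1(\mb{R}^r), G_\bullet^{+i})$. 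It remains to show that this derivative actually lies in the smaller subgroup $\hom(\mc{D}_1(\mb{R}^r), G_\bullet^{+(i+1)})$, and restricting back to $\mb{Z}^r$ (using the isomorphism of Lemma \ref{lem:generalized-shift}) will then give the conclusion.

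The proof of this last refinement will proceed by downward induction on $i$, equivalently by induction on the effective degree $s-i$. The base case $i\ge s$ is trivial since then $\tilde{g}$ is constant. For the inductive step, I pass to the quotient $\tilde{g}\bmod G_s \in \hom(\mc{D}_1(\mb{R}^r), G_\bullet^{+i}/G_s)$, a map into a prefiltered group of strictly smaller degree, to which the inductive hypothesis applies, giving $\partial_{\underline{x}}\tilde{g}\bmod G_s \in \hom(\mc{D}_1(\mb{R}^r), G_\bullet^{+(i+1)}/G_s)$. Thus we may write $\partial_{\underline{x}}\tilde{g}=u\,v$ where $u\in \hom(\mc{D}_1(\mb{R}^r), G_\bullet^{+(i+1)})$ and $v$ takes values in the abelian (connected simply-connected) group $G_s$. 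For the residual abelian piece $v$, we may use the Taylor expansion of $\tilde{g}$ provided by Lemma \ref{lem:RealTaylor} together with the elementary identity $\binom{z+x}{j}-\binom{z}{j}=\sum_{0\neq \underline{l}\le \underline{j}}\binom{\underline{x}}{\underline{l}}\binom{\underline{z}}{\underline{j}-\underline{l}}$ to check that each Taylor coefficient of $v$ is forced into $G_{|\underline{m}|+i+1}$, so $v\in \hom(\mc{D}_1(\mb{R}^r), G_\bullet^{+(i+1)})$ as well. The main obstacle in this plan is the non-commutative bookkeeping in the inductive step (extracting the $G_s$-valued residual cleanly); this difficulty is sidestepped precisely by using the abelianness of $G_s$ so that the offending computation is reduced to the elementary binomial identity above.
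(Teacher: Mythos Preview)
Your approach differs from the paper's, and while the first two assertions are handled adequately by citing standard results, the inductive argument for the third claim contains a genuine gap.

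The paper does not induct on the degree at all. It directly adapts the $H(i,m)$ machinery from Host--Kra \cite[Ch.\ 14, Theorem 6]{HKbook}, setting
\[
H(i,m)=\bigl\{\varphi \in G^{\mb{Z}^r} : \partial_{\underline{x}_1} \cdots \partial_{\underline{x}_k} \varphi \in G_{i+k}^{\mb{Z}^r}\ \text{for all }0\le k\le m,\ \underline{x}_1,\ldots,\underline{x}_k\in\mb{R}^r\bigr\},
\]
where the real shifts make sense on $\hom(\mc{D}_1(\mb{Z}^r),G_\bullet)$ precisely by Lemma \ref{lem:generalized-shift}. With this definition the inclusion $\partial_{\underline{x}}H(i,m)\subset H(i+1,m-1)$ is tautological, so the derivative claim becomes immediate once one identifies $\hom(\mc{D}_1(\mb{Z}^r),G_\bullet^{+i})$ with the stabilized $H(i,m)$ and checks the group and commutator properties of the $H(i,m)$ as in Host--Kra. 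All three assertions are thus obtained simultaneously from the same combinatorial lemma, with the only new input being that Lemma \ref{lem:generalized-shift} legitimizes real shifts.

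The problem with your inductive step is the handling of the residual $v=u^{-1}\,\partial_{\underline{x}}\tilde g$. You claim that the Taylor expansion of $\tilde g$ together with the Vandermonde identity forces each Taylor coefficient of $v$ into $G_{|\underline m|+i+1}$, but this does not follow: $v$ depends on the \emph{choice of lift} $u$, and its Taylor coefficients arise from the non-commutative product $u^{-1}\,\partial_{\underline{x}}\tilde g$, not directly from those of $\tilde g$. The abelianness of $G_s$ only tells you that $v$ is an abelian polynomial of degree at most $s-i$; it does not by itself rule out a non-trivial degree-$(s-i)$ term. To close the argument you would effectively need that the ``leading symbol'' map sending $g\in\hom(\mc{D}_1(\mb{R}^r),G_\bullet^{+i})$ to its degree-$(s-i)$ part in $G_s$ is a group homomorphism, so that the top coefficient of $\partial_{\underline{x}}\tilde g$ vanishes (shift-invariance of leading terms) and hence so does that of $v$. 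But establishing this homomorphism property requires precisely the non-commutative commutator bookkeeping you claimed to have sidestepped---one must track how commutators of lower-order Taylor coefficients feed into the top degree. The $H(i,m)$ route avoids this entirely by baking the derivative property into the definition.
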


\begin{proof}
We argue essentially as in the proof of \cite[Ch. 14 Theorem 6]{HKbook} with some small adjustments. In our case, we replace  the definition of $H(i,m)$ from \cite[p. 238]{HKbook} with
\[
H(i,m)=\left\{\varphi \in G^{\mathbb{Z}^r} : \text{For } 0 \le k \le m \text{ and } \underline{x}_1,\ldots,\underline{x}_k \in \mb{R}^{r}\text{ we have }\partial_{\underline{x}_1} \cdots \partial_{\underline{x}_r} (\varphi) \in G_{i+k}^{\mathbb{Z}^r}\right\}.
\]
Note that the shift $\vartheta_{\underline{x}}:g\mapsto g(\cdot+\underline{x})$ is well-defined by Lemma \ref{lem:generalized-shift}. The rest of the proof works  the same as the one in \cite[p. 238]{HKbook}  replacing $\partial$ by $\partial_{\underline{x}}$ and $\sigma(\phi)$ by $\vartheta_{\underline{x}}(\phi)$ for some $\underline{x}\in \mb{R}^r$.
\end{proof}

\begin{proof}[Proof of Proposition \ref{lem:growing-nilmanifold}]
We first prove that $H_\bullet$ is a filtration of degree $k+1$. Let $\mf{G}=\hom(\mc{D}_1(\mb{Z}^r),G_\bullet)$, let $\mf{G}_0=\mf{G}_1=\mf{G}$ and for  $i\geq 2$ let $\mf{G}_i=\hom(\mc{D}_1(\mb{Z}^r),G_\bullet^{+(i-1)})$. Recall that $H=H_0=H_1=\mf{G}\rtimes_{\vartheta}\mb{R}^r$ and that $H_i:=\mf{G}_i\times \{\underline{0}\}$ for $i\ge 2$, and note that $\mf{G}$ equals the product set $(\mf{G}\times \{\underline{0}\}) \cdot (\{\id\}\times \mb{R}^r)$.

We need to show that for all $i,j\ge 1$ we have $[H_i,H_j]\subset H_{i+j}$. We claim first that the groups $H_i$ for $i\ge 2$ are normal subgroups of $H$. To see this, let $h$ be any element of $H$ and write $h=ab$ where $a=(\id,x)\in \{\id\}\times \mb{R}^{r}$ and $b\in \mf{G}\times \{\underline{0}\}$, and for any $i\ge 2$ let $c\in H_i$. We need to show that $c^{ab}\in H_i$. We have $c^{ab}=b^{-1}a^{-1}cab=(c^a)^b$ (using the notation $u^v$ for the conjugate $v^{-1}uv$). Now note first that  $c^a\in H_i$ because, since $i\geq 2$, we have that $c=(\varphi,0)$ and so $c^a=(\id,-x)(\varphi,0)(\id,x)=(\id,-x)(\varphi,x)=(\vartheta_{-x}(\varphi),0)\in H_i$. Then by Lemma \ref{lem:thm6-hk}, since $b=(\varphi',0)$, we have $(c^a)^b\in H_i$ by the normality of the subgroups $\mf{G}_i$ in $\mf{G}$ (since $(\mf{G}_i)_{i\geq 0}$ is a filtration by Lemma \ref{lem:thm6-hk}).

Using the identity $[ab,c]=[a,c]^b[b,c]$ (which holds for any elements $a,b,c$ in any group, where we are using $[a,b]:=a^{-1}b^{-1}ab$) and the normality of the groups $H_i$ for $i\ge 2$, we see that to check the desired inclusion $[H_i,H_j]\subset H_{i+j}$ it suffices to check that commutators of generators of $H_i,H_j$ lie in  $H_{i+j}$. The cases where $i,j\ge 2$ follow directly from Lemma \ref{lem:thm6-hk} (again since this gives us that $(\mf{G}_i)_{i\geq 0}$ is a filtration).

If $i=1$ and $j>1$ it suffices to check that $[a,b]$ lies in the correct term in the filtration in the cases where $a\in H_i$ is either in $\mf{G}\times\{\underline{0}\}$ or in $\{\id\}\times \mb{R}^{r}$ (as these sets together generate $H$) and $b=(\varphi,0)$. In the former case the result follows by Lemma \ref{lem:thm6-hk}, and in the latter case it follows by the fact that then the commutator with $a$ corresponds to an application of $\partial_{x}$ to $\varphi$ (where $a=(\id,x)$, which results in $\partial_x\varphi\in H_{j+1}$ as required. (For $i>1$, $j=1$ the argument is similar using the alternative formula $[a,bc]=[a,c][a,b]^c$). Finally, if $i=j=1$, note that $[ab,cd]=[a,cd]^{b}[b,cd]=([a,d][a,c]^d)^b[b,d][b,c]^d$, then the only case that is different from the previous ones is when both elements lie in $ \{\id\}\rtimes_\vartheta \mb{R}^{r}$. But in this case we clearly have that the commutator is the identity and the result follows.

The subgroup $\Lambda$ is seen to be discrete using the fact that $\Gamma$ is discrete. To see that for every $i\in[k+1]$ the group $H_i$ is a rational subgroup of $H$ with respect to $\Lambda$, note first that since $\Gamma_i=\Gamma\cap G_i$ is by assumption cocompact in $G_i$, we have that $\hom(\mc{D}_1(\mb{Z}^r),G_i/\Gamma_i)$ is a compact set, and since this is homeomorphic to $\mf{G}_i/\mf{\Gamma}_i$ (where $\mf{\Gamma}_i:=\mf{\Gamma}\cap \mf{G}_i$), we conclude that $\mf{\Gamma}_i$ is cocompact in $\mf{G}_i$, so there is a compact set $K_i\subset \mf{G}_i$ such that $K_i\mf{\Gamma}_i=\mf{G}_i$ and then, letting $S_i=[0,1]^r$ for $i=0,1$ and $S_i=\{\underline{0}\}$ otherwise, we have that $K_i\times S_i$ is a compact subset of $H_i$ such that $(K_i\times S_i)\cdot \Lambda_i=H_i$, so $\Lambda_i$ is cocompact in $H_i$ as required.

Finally, note that by Lemma \ref{lem:generalized-shift}  all the $H_i$ are connected and simply-connected.
\end{proof}
\noindent Note that for a connected simply-connected nilpotent Lie group $H$, and elements $h_1,\ldots,h_r\in H$ we have that these elements commute pairwise if and only if the unique 1-parameter subgroups $h_i^t$ commute pairwise (meaning that for any $i,j\in [r]$ and $t_i,t_j\in \mb{R}$, the elements $h_i^{t_i},h_j^{t_j}$ commute). The only if direction can be seen using the Baker-Campbell-Hausdorff formula.

We can now finally obtain our main tool to extend nilsequences in the non-split case. 

\begin{proposition}\label{prop:exte-non-split}
Let $k,r\in \mb{N}$. Then, having fixed a notion of complexity for all nilmanifolds, there exists functions $W:\mb{N}^2\to\mb{N}$ and $Q:\mb{N}^2\times\mb{R}\to\mb{R}$ such that the following holds.

Let $G/\Gamma$ be a degree-$k$ filtered nilmanifold of complexity at most $m$ where $G$ is connected and simply-connected, let $\ab\cong \prod_{i=1}^r \mb{Z}_{n_i}$ be a finite abelian group, let $\varphi:\mc{D}_1(\ab)\to G$ be a morphism, and let $F:G/\Gamma\to \mb{C}$ be a Lipschitz function with Lipschitz norm at most $M$. Then there exists a degree-$k$ filtered nilmanifold $H/\Lambda$ of complexity at most $W(m,r)$, with $H$ connected and simply-connected, and pairwise commuting elements $h_1,\ldots,h_r\in H$ with $h_i^{n_i}\in \Lambda$ for $i\in[r]$, and a $Q(m,r,M)$-Lipschitz function $F':H/\Lambda\to \mb{C}$, such that
\begin{equation}\label{eq:good-repre}
F(\varphi(\underline{z})\Gamma)=F'(\prod_{i\in[r]}h_i^{z_i}\Lambda)\quad \text{for all }\underline{z}\in\prod_{i\in[r]}\mb{Z}_{n_i}\cong  \ab. 
\end{equation}
\end{proposition}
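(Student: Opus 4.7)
The plan is to lift the polynomial map $\varphi$ to a linear map on a larger, connected and simply-connected nilpotent Lie group, generalizing the univariable lifting construction of Green--Tao--Ziegler (\cite[Proposition C.2]{GTZ}) to the multivariable setting. The three main ingredients will be: (i) a polynomial lift $\tilde{\varphi}$ of $\varphi$ to $\mf{G}:=\hom(\mc{D}_1(\mb{Z}^r),G_\bullet)$ that agrees with $\varphi$ modulo $\Gamma$, (ii) the construction of $H/\Lambda$ and of the commuting generators $h_i$ via the semidirect product from Proposition \ref{lem:growing-nilmanifold}, and (iii) the construction of the Lipschitz function $F'$ on $H/\Lambda$.

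First, let $\pi\colon\mb{Z}^r\to\ab$ be the natural surjection. Taylor-expanding $\varphi$ (modulo $\Gamma$) in the binomial basis and lifting the Taylor coefficients from $G/\Gamma$ into $G$ (cf.\ the Taylor expansion machinery of Lemma \ref{lem:RealTaylor} and \cite[Theorem 2.2.14]{Cand:Notes1}), one obtains a morphism $\tilde{\varphi}\in\mf{G}$ whose reduction modulo $\Gamma$ coincides with $\varphi\co\pi$; in particular $\tilde{\varphi}(\cdot + n_ie_i)\equiv \tilde{\varphi}(\cdot)\pmod\Gamma$ pointwise for each $i\in[r]$. Apply Proposition \ref{lem:growing-nilmanifold} to $(G,G_\bullet)$ and $r$ to obtain the connected, simply-connected nilpotent Lie group $H:=\mf{G}\rtimes_\vartheta\mb{R}^r$ with its filtration $H_\bullet$ and the cocompact lattice $\Lambda:=\mf{\Gamma}\rtimes\mb{Z}^r$, yielding a filtered nilmanifold $H/\Lambda$ whose complexity is bounded by some function $W(m,r)$ of $m$ and $r$.

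Second, set $T:=(\tilde{\varphi},\underline{0})\in H$ and define $h_i:=T(\id,e_i)T^{-1}=(\tilde{\varphi}\cdot\vartheta_{e_i}(\tilde{\varphi})^{-1},e_i)\in H$ for each $i\in[r]$. Since the elements $(\id,e_i)$ pairwise commute in $H$ and conjugation is an automorphism, the $h_i$ pairwise commute. A direct computation yields $h_i^{n_i}=(\tilde{\varphi}\cdot\vartheta_{n_ie_i}(\tilde{\varphi})^{-1},n_ie_i)$, which lies in $\Lambda$ thanks to the $n_i$-periodicity of $\tilde{\varphi}$ modulo $\Gamma$ (this forces $\tilde{\varphi}\cdot\vartheta_{n_ie_i}(\tilde{\varphi})^{-1}\in\mf{\Gamma}$). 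More generally, $\prod_{i\in[r]}h_i^{z_i}=T(\id,\underline{z})T^{-1}$ for all $\underline{z}\in\mb{Z}^r$, so $\prod h_i^{z_i}\Lambda$ corresponds, under conjugation by $T$, to the coset $(\id,\underline{z})(T^{-1}\Lambda T)$ in the isomorphic nilmanifold $H/(T^{-1}\Lambda T)$.

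Third, I would construct a continuous Lipschitz map $\rho\colon H/(T^{-1}\Lambda T)\to G/\Gamma$ sending $(\id,\underline{z})(T^{-1}\Lambda T)$ to $\tilde{\varphi}(\underline{z})\Gamma$, by exploiting the fiber-bundle structure of $H/(T^{-1}\Lambda T)$ over the base torus $\mb{T}^r$ together with an appropriate evaluation of the polynomial part; the twisted lattice $T^{-1}\Lambda T$ should be precisely set up so that this evaluation descends unambiguously to the quotient. Then $F':=F\co\rho\co c_T^{-1}$, with $c_T\colon H/(T^{-1}\Lambda T)\to H/\Lambda$ the conjugation isomorphism, would define the desired function on $H/\Lambda$, and a careful analysis of a Riemannian metric on $H/\Lambda$ adapted from the one on $G/\Gamma$ via the bounded-complexity data of $\tilde{\varphi}$ yields a Lipschitz bound $Q(m,r,M)$. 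The principal obstacle is precisely the construction and quantitative Lipschitz control of $\rho$: one must verify that the evaluation map from $\mf{G}$ to $G$, combined with the $T$-twisting, descends to a well-defined continuous map on the quotient by $T^{-1}\Lambda T$, and then establish explicit Lipschitz constants depending only on $m$, $r$, and $M$.
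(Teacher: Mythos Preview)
Your overall architecture is the right one (the semidirect product of $\mf{G}=\hom(\mc{D}_1(\mb{Z}^r),G_\bullet)$ with $\mb{R}^r$, and conjugated copies of the basis elements $(\id,e_i)$ as the $h_i$), and this is indeed what the paper does. However, two substantive points separate your proposal from a complete proof.

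\textbf{The degree of $H/\Lambda$.} Applying Proposition~\ref{lem:growing-nilmanifold} to $(G,G_\bullet)$ yields a filtration on $H=\mf{G}\rtimes_\vartheta\mb{R}^r$ of degree $k+1$, not $k$; but the proposition demands a degree-$k$ nilmanifold. The paper repairs this by applying Proposition~\ref{lem:growing-nilmanifold} \emph{twice}: once to $G_\bullet$ (giving a degree-$(k+1)$ ambient nilmanifold $H'/\Lambda'$) and once to the shifted prefiltration $G_\bullet^{+1}$ (giving a degree-$k$ normal subnilmanifold $H/\Lambda\leq H'/\Lambda'$). The elements $h_i$, defined as conjugates inside $H'$ of $(\id,e_i)\in H$, automatically lie in the normal subgroup $H$, so the linear orbit lives in the degree-$k$ piece as required. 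Your construction as written does not produce a degree-$k$ object.

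\textbf{Uniform Lipschitz control.} Your $T=(\tilde{\varphi},\underline{0})$ depends on $\varphi$, which in turn depends on the ambient group $\ab$ and is not of bounded complexity in any sense---the Taylor coefficients of $\tilde{\varphi}$ can be arbitrary elements of $G$. Consequently the conjugation $c_T$ (or equivalently the metric geometry of $H/(T^{-1}\Lambda T)$) has no bound independent of $\varphi$, and your claimed $Q(m,r,M)$ bound cannot be obtained from ``the bounded-complexity data of $\tilde{\varphi}$'' because there is no such data. The paper's key device here is to pick a representative $(\varphi',\underline{x'})$ of $(\varphi,\underline{0})\Lambda'$ lying in a \emph{fixed} compact fundamental domain $K\subset H'$ (depending only on $H'/\Lambda'$, hence only on $m,r$), and then use left-multiplication $L_{(\varphi',\underline{x'})}$ on the \emph{fixed} nilmanifold $H'/\Lambda'$ rather than conjugation. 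Since $K$ is compact, $\sup_{h\in K}\|L_h\|_{\Lip}=O_{m,r}(1)$, which is exactly the uniformity you need. The function $F'$ is then built as (a Kirszbraun extension of) $F\circ\mathrm{ev}_0\circ L_{(\varphi',\underline{x'})}$ restricted to $H/\Lambda$, where $\mathrm{ev}_0$ is evaluation of the polynomial part at $\underline{0}$.

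A minor point: since the hypothesis has $\varphi:\mc{D}_1(\ab)\to G$ (into $G$, not $G/\Gamma$), you may simply take $\tilde{\varphi}=\varphi\circ\pi$, which is \emph{genuinely} $n_i$-periodic; your detour through lifting modulo $\Gamma$ is unnecessary and would in fact complicate the verification that $h_i^{n_i}\in\Lambda$.
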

\noindent This is a multivariable extension of \cite[Proposition C.2]{GTZ} (the latter is the 1-variable case $r=1$).
\begin{proof}
Let $H',\Lambda'$ be as given by Proposition \ref{lem:growing-nilmanifold} applied with $G_\bullet$, and let $H,\Lambda$ be as given by that proposition applied with $G_\bullet^{+1}$. Note that $H'/\Lambda'$ is filtered of degree $k+1$, whereas the subnilmanifold $H/\Lambda$ is filtered of degree $k$. Note also that, by construction, if $G/\Gamma$ has complexity $m$ then $H/\Lambda$ has complexity at most $W(m,r)$ for some function $W$.

By definition $H'=\mf{G}'\rtimes_\vartheta \mb{R}^r$ where $\mf{G}'=\hom(\mc{D}_1(\mb{Z}^r),G_\bullet)$. We embed $\mf{G}'$ as the subgroup $\mf{G}'\times\{\underline{0}\}\leq  H'$ and we embed $\mb{R}^r$ as the subgroup $\{\id\}\times \mb{R}^r\leq H'$.  Abusing the notation, we assume that $\varphi$ is a polynomial map in $\hom(\mc{D}_1(\mb{Z}^r), G_\bullet)$ which is $n_i$-periodic in the $i$-th variable, for each $i\in[r]$.

Let $K\subset H'$ be a compact subset such that $K\Lambda'=H'$ and let $(\varphi',\underline{x'})\in K$ be such that $(\varphi',\underline{x'})\Lambda'=(\varphi,\underline{0})\Lambda'$. Let $\{e_i:i\in [r]\}$ be the standard basis of $\mb{R}^r$, and let $h_i:=(\varphi',\underline{x'})^{-1}(\id,e_i)(\varphi',\underline{x'})$. Clearly the one-parameter subgroup corresponding to this element is $h_i^t:=(\varphi',\underline{x'})^{-1}(\id,t e_i)(\varphi',\underline{x'})$ for $t\in \mb{R}$ (where $t e_i$ is the scalar multiple of $e_i$ by $t$). Since $H$ is normal in $H'$ and $e_i\in H$, we have that $h_i\in H$ for $i\in[r]$. Using the definition of product in the group $H'$ and the fact that $\mb{R}^r$ is commutative, it follows that the 1-parameter subgroups $h_i^t$, $i\in [r]$ commute. For $i\in[r]$, as $\varphi$ is $n_i$-periodic in the $i$-variable, we have that $(\id,n_ie_i)(\varphi,\underline{0})(\id,-n_ie_i)= (\varphi(\cdot+n_ie_i),e_i)(\id,-n_ie_i)=(\varphi,\underline{0})$. Hence (recalling that $\Lambda'=\hom(\mc{D}_1(\mb{Z}^r),\Gamma_\bullet)\rtimes_\vartheta\mb{Z}^r$ and therefore $(\id,-n_ie_i)\in \Lambda'$) we have $(\id,n_ie_i)(\varphi,\underline{0})\Lambda'=(\varphi,\underline{0})\Lambda'$. Therefore $(\id,n_ie_i)(\varphi',\underline{x'})\Lambda'=(\varphi',\underline{x'})\Lambda'$ and it follows that $h_i^{n_i}\in \Lambda'$.

Let us now construct the function $F'$. First, note that left multiplication by any element $h'\in H'$ on the nilmanifold $H'/\Lambda'$ is Lipschitz. As $K$ is compact, the Lipchitz constant of left-multiplication $L_{h'}$ by any element $h'\in K$ is uniformly bounded by some constant, i.e.\ $\sup_{h'\in K}\|L_{h'}\|_L=O_{K,H'/\Lambda'}(1)$. As $K$ can be chosen fixed for $H'/\Lambda'$, we can assume that this constant is simply $O_{H'/\Lambda'}(1)$. As the construction of $H'/\Lambda'$ depends solely on $G/\Gamma$ and $r$, this constant is bounded by some quantity $O_{m,r}(1)$.

Following the proof of \cite[Proposition C.2]{GTZ}, let $\rho:H'/\Lambda'\to \mb{T}^r$ be the morphism that sends $(g,\underline{x})\Lambda'\mapsto \underline{x}\mod \mb{Z}^r$. Then $\rho^{-1}(\{\underline{0}\})$ is a compact subset. Moreover, the map $\mathrm{ev}_0:\rho^{-1}(\{\underline{0}\})\to G/\Gamma$ given by\footnote{Note that any element in $\rho^{-1}(\{\underline{0}\})$ has a representative of the form $(g,\underline{0})$ and it easily follows that $\mathrm{ev}_0$ is well-defined.} $(g,\underline{0})\Lambda'\mapsto g(\underline{0})\Gamma$ is a Lipchitz map with $\|\mathrm{ev}_0\|_L=O_{G/\Gamma}(1)$. Similarly as in \cite[proof of Theorem 7.16]{CGSS-doucos}, which is in essence Kirszbraun’s theorem, we can extend the map $F\co \mathrm{ev}_0$ to a Lipschitz map on $H'/\Lambda'$ in such a way that the Lipschitz constant is $O_{G/\Lambda,M}(1)$. Abusing the notation, we denote this function\footnote{Moreover, we can assume that $F\co \mathrm{ev}_0$ is also 1-bounded simply by composing with $t\mapsto t\text{ if }|t|\le 1\text{ and }t/|t|\text{ otherwise}$. To avoid complicating further the notation, we omit this step and we simply assume that $F\co \mathrm{ev}_0$ is 1-bounded and $\|F\co \mathrm{ev}_0\|_L=O_{G/\Gamma}(1)$.} by $F\co \mathrm{ev}_0$. Finally, note that $F\co \mathrm{ev}_0\co L_{\varphi'}$ is then a 1-bounded Lipschitz map with Lipschitz norm $\|F\co \mathrm{ev}_0\co L_{\varphi'}\|_L\le \|F\co \mathrm{ev}_0\|_L\|L_{\varphi'}\|_L=O_{G/\Gamma,M}(1)$. This map is defined on $H'/\Lambda'$, but we may simply restrict it to the subnilmanifold $H/\Lambda$ (with possibly larger Lipschitz constant depending only on $G/\Gamma$ by \cite[Lemma A.17]{GTorb}). Hence, we shall assume that $F':=F\co \mathrm{ev}_0\co L_{\varphi'}$ is a $Q(m,r,M)$-Lipschitz map $F':H/\Lambda\to \mb{C}$.

To see that we have indeed the equality \eqref{eq:good-repre}, we have to unfold the definition of $F'$ and the elements $h_i$. Indeed note that $F'(\prod_{i\in[r]}h_i^{z_i}\Lambda)=F\co \mathrm{ev}_0\co L_{\varphi'}(\prod_{i\in[r]}h_i^{z_i}\Lambda') = F\co \mathrm{ev}_0((\id,\sum_{i\in[r]}z_i\underline{e_i})(\varphi',\underline{x'})\Lambda') = F\co \mathrm{ev}_0((\id,\underline{z})(\varphi,\underline{0})\Lambda') = F\co \mathrm{ev}_0((\varphi(\cdot+\underline{z}),\underline{z})\Lambda') = F\co \mathrm{ev}_0((\varphi(\cdot+\underline{z}),\underline{z})(\id,-\underline{z})\Lambda') = F\co \mathrm{ev}_0((\varphi(\cdot+\underline{z}),\underline{0})\Lambda') =  F(\varphi(\underline{z})\Gamma)$.\end{proof}

\noindent It is now straightforward to achieve the desired extension in the non-split case, as follows.

\begin{corollary}[Extending in the non-split case]\label{cor:non-split-case}
Let $k,d,p,r\in \mb{N}$, let $A$ be a finite abelian group of rank $r-1$, and let $(G/\Gamma,G_\bullet)$ be a filtered nilmanifold of degree $k$ where the nilpotent Lie group $G$ is connected and simply-connected. Then for every nilsequence $F(\varphi(\cdot)\Gamma):(p\mb{Z}_{pd})\oplus A\to\mb{C}$ there exists another nilsequence $F'(\phi(\cdot)\Lambda):\mb{Z}_{pd}\oplus A\to\mb{C}$ such that
\begin{equation}\label{eq:corext}
\forall\, \underline{z}\in (p\cdot \mb{Z}_{pd})\oplus A,\quad F(\varphi(\underline{z})\Gamma)=F'(\phi(\underline{z})\Lambda).
\end{equation}
Moreover, if for some $m\ge 0$ we have that the nilsequence $F(\varphi(\cdot)\Gamma)$ has complexity $m$, then $F'(\phi(\cdot)\Lambda)$ has complexity at most $K(m,r)$ for some fixed function $K:\mb{Z}_{\ge 0}^2\to \mb{Z}_{\ge 0}$.
\end{corollary}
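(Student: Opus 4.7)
The plan is to reduce this corollary directly to Proposition \ref{prop:exte-non-split}, exploiting the extra flexibility that a connected and simply-connected nilpotent Lie group admits unique roots of every order. Write $A\cong \bigoplus_{j=2}^r \mb{Z}_{n_j}$, and identify the subgroup $(p\mb{Z}_{pd})\oplus A$ with $\mb{Z}_d\oplus \mb{Z}_{n_2}\oplus\cdots\oplus\mb{Z}_{n_r}$ via the map $pw\mapsto w$ on the first coordinate. Apply Proposition \ref{prop:exte-non-split} to the nilsequence $F(\varphi(\cdot)\Gamma)$ on this group (with $n_1=d$) to produce a degree-$k$ filtered nilmanifold $H/\Lambda$ of complexity at most $W(m,r)$, pairwise commuting elements $h_1,h_2,\ldots,h_r\in H$ with $h_1^d,h_2^{n_2},\ldots,h_r^{n_r}\in\Lambda$, and a Lipschitz function $F':H/\Lambda\to\mb{C}$ of controlled Lipschitz norm, such that
\[
F(\varphi(pw,z_2,\ldots,z_r)\Gamma)=F'\bigl(h_1^w h_2^{z_2}\cdots h_r^{z_r}\Lambda\bigr)
\]
for all $(w,z_2,\ldots,z_r)\in\mb{Z}_d\oplus A$.

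Next, I would take a $p$-th root of $h_1$ in $H$. Since $H$ is connected, simply-connected and nilpotent, the exponential map is a diffeomorphism and one may define $h_1':=\exp\bigl(\tfrac{1}{p}\log h_1\bigr)$, so that $(h_1')^p=h_1$, and in particular $(h_1')^{pd}=h_1^d\in\Lambda$. Crucially, $h_1'$ lies on the unique 1-parameter subgroup through $h_1$, so by the characterization of commuting elements in a connected simply-connected nilpotent Lie group (recalled in the paragraph preceding Proposition \ref{prop:exte-non-split}), the elements $h_1',h_2,\ldots,h_r$ again commute pairwise.

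Because these elements commute and lie in $H=H_1$, the map $\underline{z}\mapsto (h_1')^{z_1}\prod_{j=2}^r h_j^{z_j}$ is a group homomorphism $\mb{Z}^r\to H$, hence a polynomial morphism $\mc{D}_1(\mb{Z}^r)\to H$ of degree $1$; and since $(h_1')^{pd}$ and $h_j^{n_j}$ all lie in $\Lambda$, it descends to a well-defined morphism $\phi:\mc{D}_1(\mb{Z}_{pd}\oplus A)\to H/\Lambda$. Define the extended nilsequence by $F'(\phi(\cdot)\Lambda)$. Verifying \eqref{eq:corext} is then immediate: for $\underline{z}=(pw,z_2,\ldots,z_r)\in(p\mb{Z}_{pd})\oplus A$, we have $\phi(\underline{z})=(h_1')^{pw}h_2^{z_2}\cdots h_r^{z_r}=h_1^w h_2^{z_2}\cdots h_r^{z_r}$, and the two sides of \eqref{eq:corext} agree by the conclusion of Proposition \ref{prop:exte-non-split}.

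Finally, for the complexity bound: the nilmanifold $H/\Lambda$ and the function $F'$ come directly from Proposition \ref{prop:exte-non-split}, so they are controlled by $W(m,r)$ and $Q(m,r,M)$ respectively; the morphism $\phi$ is determined by the tuple $(h_1',h_2,\ldots,h_r)$, whose coordinates are obtained by one application of the exponential map (and a division by $p$) to the tuple supplied by the proposition, so its contribution to the complexity is likewise bounded by a function $K(m,r)$. The only non-routine point is the passage from $h_1$ to its $p$-th root, and this is handled cleanly by the exponential map in $H$; no serious obstacle is anticipated.
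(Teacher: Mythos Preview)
Your proposal is correct and follows essentially the same approach as the paper: apply Proposition~\ref{prop:exte-non-split} with $n_1=d$, take the $p$-th root of $h_1$ along its 1-parameter subgroup in the connected simply-connected group $H$, and use commutativity of the resulting elements to define the extended morphism on $\mb{Z}_{pd}\oplus A$. The paper's proof is virtually identical (writing $w_1$ for your $h_1'$), and your invocation of the paragraph preceding Proposition~\ref{prop:exte-non-split} to pass from commuting elements to commuting 1-parameter subgroups is exactly the mechanism the paper relies on.
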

\begin{proof}
We can assume that the finite abelian group $\ab:=(p\mb{Z}_{pd})\times A$ has an expression $\ab =\prod_{i=1}^r\mb{Z}_{n_i}$ where $n_1$ is the order $d$ of the first component $p\mb{Z}_{pd}\cong \mb{Z}_d$ of $\ab$.

We now gather the elements given by Proposition \ref{prop:exte-non-split}: let $H/\Lambda$ be the nilmanifold, let $(h_i^t)_{i\in [r]}$ be the commuting 1-parameter subgroups of $H$, and let $F':H/\Lambda:\to\mb{C}$ be the 1-bounded Lipschitz function, such that \eqref{eq:good-repre} holds.

Now note that in the 1-parameter subgroup $h_1$ there is a (unique) $p$-th root of $h_1$, namely an element $w_1$ such that $w_1^p=h_1$. Define the polynomial map $\phi(\cdot)\Lambda$ on the group $\ab':= \mb{Z}_{pd}\times A=\mb{Z}_{pd}\times \prod_{i=2}^r\mb{Z}_{n_i}$ by $\phi(\underline{z})\Lambda = w_1^{z_1}h_2^{z_2}\cdots h_r^{z_r}\Lambda$, noting that this is a well-defined map on $\ab'$ thanks to the fact that $w_1^{pd}=h_1^d\in \Lambda$ and the 1-parameter subgroups $w_1^t,h_2^t,\ldots,h_r^t$ commute. From $w_1^p=h_1$ we then deduce that for every $\underline{z}\in \ab$ we have $\phi(\cdot)\Lambda = \prod_{i=1}^r h_i^{z_i} \Lambda$. Combining this with \eqref{eq:good-repre}, the desired equation \eqref{eq:corext} follows.
\end{proof}

We close this section by combining all the above results to obtain the following extension theorem, which is the one that we shall use in the next section.

\begin{theorem}\label{thm:mainextension}
Let $\ab$ be a finite abelian group of rank $r$, let $\ab_0$ be a subgroup of $\ab$, and let $F_0(g_0(\cdot)\Gamma_0)$ be a polynomial nilsequence of degree $k$ and complexity at most $C_0$ on $\ab_0$. Then there exists a nilsequence $F(g(\cdot)\Gamma)$ on $\ab$ of complexity at most $C=C(C_0,r,k)$ such that for every $z\in \ab_0$ we have $F(g(z)\Gamma)=F_0(g_0(z)\Gamma_0)$.
\end{theorem}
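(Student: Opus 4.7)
The plan is to perform all extensions via a single application of Proposition \ref{prop:exte-non-split}, using a stacked-bases decomposition of $\ab$ with respect to $\ab_0$ to set up the target nilmanifold once, and then obtaining the extended polynomial map by root-extraction in the resulting one-parameter subgroups. The naive iteration over the ladder of Lemma \ref{lem:ladder}, applying Lemma \ref{lem:splitcase} at split rungs and Corollary \ref{cor:non-split-case} at non-split rungs, would compound the complexity bound $K(\cdot,r)$ over up to $\log_2(|\ab|/|\ab_0|)$ non-split steps, producing a bound that grows with $|\ab|$ rather than depending only on $C_0, r, k$.

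First, I would invoke the stacked-bases theorem for finite abelian groups (a standard consequence of Smith Normal Form): there exists a decomposition $\ab \cong \prod_{i=1}^s \mb{Z}_{m_i}$ with $s \le r$ and divisors $e_i \mid m_i$ (setting $d_i := m_i/e_i$, possibly equal to $1$), such that the inclusion $\ab_0 \hookrightarrow \ab$ identifies $\ab_0$ with $\prod_{i=1}^s \mb{Z}_{d_i}$ via $(z_1',\ldots,z_s') \mapsto (e_1 z_1',\ldots,e_s z_s')$. Then I would apply Proposition \ref{prop:exte-non-split} to $F_0(g_0(\cdot)\Gamma_0)$ viewed as a nilsequence on $\ab_0 \cong \prod \mb{Z}_{d_i}$, obtaining a degree-$k$ filtered nilmanifold $H/\Lambda$ of complexity at most $W(C_0,r)$ with $H$ connected and simply-connected, pairwise commuting elements $h_1,\ldots,h_s \in H$ with $h_i^{d_i} \in \Lambda$, and a Lipschitz function $F' \colon H/\Lambda \to \mb{C}$ of norm bounded in terms of $C_0, r$, satisfying $F_0(g_0(z')\Gamma_0) = F'\bigl(\prod_{i=1}^s h_i^{z_i'} \Lambda\bigr)$ for all $z' \in \prod \mb{Z}_{d_i}$.

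Next, within each commuting one-parameter subgroup $h_i^t$ of the connected simply-connected nilpotent Lie group $H$, take the unique $e_i$-th root $w_i := h_i^{1/e_i}$; these $w_i$ still commute pairwise and satisfy $w_i^{m_i} = h_i^{d_i} \in \Lambda$. Hence $g \colon \mc{D}_1(\ab) \to H/\Lambda$ defined by $g(z) := \prod_{i=1}^s w_i^{z_i}$ is a well-defined polynomial morphism. Setting $F := F'$, for any $z \in \ab_0$ corresponding under the stacked-bases embedding to $(e_i z_i')_i$ we compute $g(z) = \prod_i w_i^{e_i z_i'} = \prod_i h_i^{z_i'}$, so $F(g(z)\Lambda) = F_0(g_0(z)\Gamma_0)$. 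The final complexity depends only on $W(C_0,r)$ and the Lipschitz norm of $F'$, hence only on $C_0, r, k$.

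The main obstacle is the conceptual recognition that the one-parameter-subgroup structure produced by Proposition \ref{prop:exte-non-split} already encodes enough flexibility to accommodate every compatible enlargement of $\ab_0$: root-extraction within these subgroups simultaneously handles all non-split enlargements of cyclic factors, and the stacked-bases decomposition aligns this with the ambient group $\ab$. Thus a single invocation of that proposition suffices to control the complexity as claimed, whereas iterative use of Corollary \ref{cor:non-split-case} along the ladder appears too lossy for the stated bound.
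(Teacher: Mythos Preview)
Your argument is correct and takes a genuinely different, cleaner route than the paper. The paper climbs the ladder of Lemma~\ref{lem:ladder} one rung at a time, applying Lemma~\ref{lem:splitcase} or Corollary~\ref{cor:non-split-case} at each step, and then bounds the ladder length $t\le\log_2(|\ab|/|\ab_0|)$ by $\log_2(C_0)$ via the assertion that $|\ab|/|\ab_0|$ equals the order of the finite part $H$ of the $1$-step factor of $G_0/\Gamma_0$. As you implicitly noticed, that assertion is not a consequence of the hypotheses of Theorem~\ref{thm:mainextension} as stated; it holds only in the intended application through Proposition~\ref{prop:mainstep1}, where the index is built into the complexity bound $C_0$. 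Your single-shot approach via an adapted (stacked-bases) decomposition of $\ab_0$ inside $\ab$ bypasses this entirely: one invocation of Proposition~\ref{prop:exte-non-split} on $\ab_0\cong\prod_{i=1}^s\mb{Z}_{d_i}$ with $s\le r$, followed by simultaneous root extraction $w_i:=h_i^{1/e_i}$ in the commuting one-parameter subgroups, produces the extension with complexity governed solely by $W(C_0,r)$ and $Q(C_0,r,\cdot)$. This proves the theorem in the stated generality (arbitrary $\ab_0\le\ab$, unbounded index) and absorbs the split and non-split cases into a single construction. The structural fact you use, that any subgroup of a rank-$r$ finite abelian group sits as $\bigoplus_{i=1}^s e_i\mb{Z}_{m_i}\le\bigoplus_{i=1}^s\mb{Z}_{m_i}$ with $s\le r$, is standard: reduce to $p$-groups (where every cyclic decomposition automatically has exactly $\mathrm{rank}$ summands and the adapted-basis theorem is classical) and recombine across primes via the Chinese Remainder Theorem.
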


\begin{proof}
Take the sequence of subgroups $\ab_0\leq \ab_1\leq \cdots\leq \ab_t=\ab$ given in \eqref{eq:ladder}. For each $i$, if $\ab_{i-1}$ is a direct summand of $\ab_i$, assuming we have extended $F_0(g_0(\cdot)\Gamma_0)$ already to a nilsequence $F_{i-1}(g_{i-1}(\cdot)\Gamma_{i-1})$ on $\ab_{i-1}$, we apply Lemma \ref{lem:splitcase} and thus extend the latter nilsequence to $F_i(g_i(\cdot)\Gamma_i)$ where in fact $G_i/\Gamma_i=G_{i-1}/\Gamma_{i-1}$ and $F_i=F_{i-1}$. If on the contrary $\ab_i$ is a non-split extension of $\ab_{i-1}$, then we apply Corollary \ref{cor:non-split-case} to obtain the extension $F_i(g_i(\cdot)\Gamma_i)$. Continuing this way iteratively, we eventually obtain the claimed extension  $F(g(\cdot)\Gamma)$ on $\ab$. 

It remains to prove that the complexity of $F(g(\cdot)\Gamma)$ is controlled. For each $i\in [0,t]$, let $C_i$ be a complexity bound for the nilsequence $F_i(g_i(\cdot)\Gamma_i)$ obtained at the $i$-th extension. Recall from Lemma \ref{lem:ladder} that $t\leq \log_2(|\ab|/|\ab_0|)$. Since the index $|\ab|/|\ab_0|$ is equal to the order of the finite abelian group $H$ such that the 1-step factor of $G_0/\Gamma_0$ is isomorphic to $H\times \mb{T}^\ell$, we have $t\leq \log_2(C_0)$. For each $i$, if at the $i$-th iteration $\ab_i$ is a split extension of $\ab_{i-1}$, then we can take $C_i=C_{i-1}$ (since the nilmanifold $G_i/\Gamma_i$ and function $F_i$ are the same as the previous ones). If on the contrary the $i$-th extension requires applying Corollary \ref{cor:non-split-case}, then $C_i\leq K(C_{i-1},r)$ (for the function $K$ in the corollary). It follows that $C=C_t\leq K^{(t)}(C_0,r)$, where the superscript $(t)$ here indicates $t$ iterations of the function $K(\cdot,r)$.
\end{proof}

\section{Proof of the inverse theorem}\label{sec:mainproof}

In this section we prove our main result, the inverse theorem for abelian groups $\ab$ of bounded rank (Theorem \ref{thm:bndrankinv-intro}), which we recall here for convenience. 

\begin{theorem}\label{thm:bndrankinv}
For every $k,R\in \mb{N}$ and $\delta>0$, there exist  $C>0$ and $\varepsilon>0$ such that the following holds. Let $\ab$ be a finite abelian group of rank at most $R$, and let $f:\ab\to\mb{C}$ be a 1-bounded function satisfying $\|f\|_{U^{k+1}}\geq \delta$. Then there exists a connected and simply-connected filtered Lie group $(G,G_\bullet)$ of degree at most $k$, and a discrete cocompact subgroup $\Gamma\leq G$, such that the toral nilspace associated with the filtered nilmanifold $(G/\Gamma,G_\bullet)$ has complexity at most $C$, and there is a polynomial map $g:\ab\to G/\Gamma$, and a continuous 1-bounded function $F:G/\Gamma\to\mb{C}$ of Lipschitz constant at most $C$, such that
\begin{equation}\label{eq:invlobound}
|\mb{E}_{x\in \ab} f(x)\overline{F(g(x))}| \geq \varepsilon.  
\end{equation}
\end{theorem}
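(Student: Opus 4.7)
The plan is to combine Proposition~\ref{prop:mainstep1} and Theorem~\ref{thm:mainextension} (the outputs of the two main steps established in the preceding sections), and then convert the resulting correlation on a coset of a subgroup $\ab_0$ into one on the full group $\ab$ by a short Fourier/pigeonholing argument on the finite quotient $H:=\ab/\ab_0$.

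First, I would apply Proposition~\ref{prop:mainstep1} to $f$ to obtain constants $C_0,\varepsilon_0>0$ depending only on $k,R,\delta$, a subgroup $\ab_0\leq\ab$ of index at most $C_0$, a shift $t_0\in\ab$, a toral filtered nilmanifold $G_0/\Gamma_0$ of complexity at most $C_0$ with connected and simply-connected ambient group, a polynomial map $\varphi_0:\mc{D}_1(\ab_0)\to G_0/\Gamma_0$, and a 1-bounded Lipschitz $F_0$ of Lipschitz norm at most $C_0$ satisfying $|\mb{E}_{y\in\ab_0}f(t_0+y)F_0(\varphi_0(y))|\geq\varepsilon_0$. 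Then I would apply Theorem~\ref{thm:mainextension} to extend the polynomial nilsequence $F_0\co\varphi_0$ from $\ab_0$ to a polynomial nilsequence $\tilde F\co\tilde g$ on $\ab$, where $\tilde g:\ab\to G_*/\Gamma_*$ for some degree-$k$ filtered nilmanifold with $G_*$ connected and simply-connected and complexity at most $C_1=C_1(C_0,R,k)$, and $\tilde F$ is 1-bounded Lipschitz of norm at most $C_1$. Translating, I set $g(x):=\tilde g(x-t_0)$, which is again a polynomial on $\mc{D}_1(\ab)$; then for $x\in t_0+\ab_0$ one has $\tilde F(g(x))=F_0(\varphi_0(x-t_0))$, and substituting into the inequality from Proposition~\ref{prop:mainstep1} yields $|\mb{E}_{x\in t_0+\ab_0}f(x)\tilde F(g(x))|\geq\varepsilon_0$.

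To promote this coset-correlation to the full group, I would Fourier-expand the indicator as $1_{t_0+\ab_0}(x)=\tfrac{1}{|H|}\sum_{\chi\in\widehat H}\chi(x-t_0)$, identifying each $\chi\in\widehat H$ with its pullback to $\ab$ through the quotient map $\ab\to H$. Multiplying through by $f(x)\tilde F(g(x))$ and averaging over $x\in\ab$ gives $\sum_{\chi\in\widehat H}\mb{E}_{x\in\ab}f(x)\tilde F(g(x))\chi(x-t_0)=\mb{E}_{x\in t_0+\ab_0}f(x)\tilde F(g(x))$, so by the pigeonhole principle some character $\chi^*\in\widehat H$ satisfies $|\mb{E}_{x\in\ab}f(x)\tilde F(g(x))\chi^*(x-t_0)|\geq\varepsilon_0/|H|\geq\varepsilon_0/C_0$. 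I would absorb $\chi^*$ into the nilsequence structure by adding an extra circle factor: let $G:=G_*\times\mb R$ and $\Gamma:=\Gamma_*\times\mb Z$, equipped with the product filtration of degree $k$ (valid because $\mb R$ sits in the center of the product, so all the commutator inclusions are preserved). Setting $g'(x):=(g(x),\tilde\chi^*(x-t_0))$ where $\tilde\chi^*:\ab\to\mb T=\mb R/\mb Z$ is the additive lift of $\chi^*$, and $F(p,q):=\overline{\tilde F(p)\,e^{2\pi iq}}$ (1-bounded and Lipschitz with norm $O(C_1)$), yields $|\mb{E}_{x\in\ab}f(x)\overline{F(g'(x))}|\geq\varepsilon_0/C_0$, establishing \eqref{eq:invlobound} with $\varepsilon:=\varepsilon_0/C_0$ and $C$ chosen to dominate all the complexity and Lipschitz bounds.

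Since the two substantive ingredients (Proposition~\ref{prop:mainstep1} and Theorem~\ref{thm:mainextension}) have done the heavy lifting, no serious obstacle remains in this final proof; the only care required is in the last step, to verify that taking the product with $\mb R/\mb Z$ preserves both the connected and simply-connected structure of the ambient Lie group and the quantitative complexity and Lipschitz bounds, both of which hold by inspection.
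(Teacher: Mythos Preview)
Your proposal is correct and follows essentially the same approach as the paper's own proof: apply Proposition~\ref{prop:mainstep1}, extend via Theorem~\ref{thm:mainextension}, Fourier-expand the coset indicator over $\ab_0^\perp\cong\widehat{H}$, pigeonhole to a single character, and absorb that character by taking a product with an extra circle factor $\mb{R}/\mb{Z}$. The only cosmetic differences are the order in which you perform the shift by $t_0$ versus the Fourier expansion, and that the paper represents the character via a bilinear form $e(\xi\cdot x)$ rather than an abstract additive lift.
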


\begin{proof}
By Proposition \ref{prop:mainstep1}, we have a connected filtered nilmanifold $G_0/\Gamma_0$ of degree $k$ and complexity at most $C_0(\delta,k,R)$, a subgroup $\ab_0\leq \ab$ with index $|\ab|/|\ab_0|=O_{C_0}(1)$, and a nilsequence $F_0(g_0(\cdot)\Gamma_0)$ on $\ab_0$ where $F_0:G_0/\Gamma_0\to\mb{C}$ has Lipschitz constant at most $C_0$ and $g_0(\cdot)\Gamma_0\in \hom(\mc{D}_1(\ab_0),(G_0/\Gamma_0,(G_{0\bullet}))$, such that for some $t_0\in \ab$ we have
\begin{equation}\label{eq:startingbnd}
  \mb{E}_{x\in \ab_0} f(x+t_0)F_0(g_0(x)\Gamma_0) \geq \varepsilon_0. 
\end{equation}
Let $F'(g'(\cdot)\Gamma')$ be the nilsequence on $\ab$ extending $F_0(g_0(\cdot)\Gamma_0)$, given by Theorem \ref{thm:mainextension}.

By basic Fourier analysis, the indicator function $1_{\ab_0}$ on $\ab$ satisfies (noting that $\frac{1}{|\ab_0^\perp|}=\frac{|\ab_0|}{|\ab|}$) $ 1_{\ab_0}(x)=\sum_{\chi\in \ab_0^\perp}\tfrac{|\ab_0|}{|\ab|} \chi(x)=\mb{E}_{\chi\in \ab_0^\perp}\chi(x)$. Hence, starting from \eqref{eq:startingbnd}, we have
\begin{align*}
& \varepsilon_0  \leq \mb{E}_{x\in \ab_0}f(x+t_0) F_0(g_0(x)\Gamma_0) =  \tfrac{|\ab|}{|\ab_0|} \mb{E}_{x\in \ab} f(x+t_0) F'(g'(x)\Gamma') 1_{\ab_0}(x)\\
&  = \tfrac{|\ab|}{|\ab_0|} \mb{E}_{\chi\in \ab_0^\perp} \mb{E}_{x\in \ab}f(x+t_0) F'(g'(x)\Gamma') \chi(x) \leq  \tfrac{|\ab|}{|\ab_0|} \max_{\chi\in \ab_0^\perp} | \mb{E}_{x\in \ab}f(x) F'(g'(x-t_0)\Gamma') \chi(x-t_0)|.
\end{align*}
Now fix any $\chi\in \ab_0^\perp$ attaining this last maximum, let us relabel the shifted polynomial map $x\mapsto g'(x-t_0)$ as a polynomial map $g''(x)$, and let us ignore the multiplicative constant $ \chi(t_0)$ of modulus 1, thus concluding that
\begin{equation}\label{eq:correl2}
 | \mb{E}_{x\in \ab}f(x) F'(g''(x)\Gamma') \chi(x)| \geq \varepsilon_0 \tfrac{|\ab_0|}{|\ab|} =\Omega_{C_0}(\varepsilon_0)=:\varepsilon.
\end{equation}
By the standard representation of characters via non-degenerate symmetric bilinear forms (see \cite[Ch.\ 4]{T-V}), there exists such a form $\cdot:\ab\times\ab\to \mb{T}$ and some $\xi\in \ab$ such that $\chi(x)=e(\xi\cdot x)$. We can now define the nilmanifold $G/\Gamma$ as $(G=G'\times \mb{R})/(\Gamma=\Gamma' \times \mb{Z})$, which is isomorphic as a compact nilspace to $(G'/\Gamma')\times \mb{T}$, and define the  map $g(\cdot)\Gamma:\ab\to G/\Gamma$ by $g(x)\Gamma:= (g''(x)\Gamma',\xi\cdot x)$, which is indeed a nilspace morphism $\mc{D}_1(\ab)\to (G'/\Gamma')\times \mc{D}_1(\mb{T})$. We define also the 1-bounded Lipschitz function $F:G/\Gamma\to\mb{C}$ by $F(x\Gamma',z)=F'(x\Gamma')e(z)$ for any $x\Gamma'\in G'/\Gamma'$ and $z\in\mb{T}$, and it can be checked easily that $F$ has bounded Lipschitz constant if $F_0$ does (using that both $F_0$ and $e(\cdot)$ are 1-bounded). Thus, the inequality \eqref{eq:correl2} is rewritten as $
|\mb{E}_{x\in \ab}f(x) F(g(x)\Gamma)| \geq \varepsilon$. This completes the proof.\end{proof}


\begin{thebibliography}{1}

\bibitem{BTZ} V. Bergelson, T. Tao and T. Ziegler, \emph{An inverse theorem for the uniformity seminorms associated with the action of $\mb{F}_p^{\infty}$}, Geom. Funct. Anal. \textbf{19} (2010), no. 6, 1539--1596.

\bibitem{CamSzeg} O. Antol\'in Camarena, B. Szegedy, \emph{Nilspaces, nilmanifolds and their morphisms}, preprint (2010), \url{http://arxiv.org/abs/1009.3825}

\bibitem{Cand:Notes1} P. Candela, \emph{Notes on nilspaces: algebraic aspects}, Discrete Anal., 2017, Paper No. 15, 59 pp.

\bibitem{Cand:Notes2} P. Candela, \emph{Notes on compact nilspaces}, Discrete Anal., 2017, Paper No. 16, 57pp.

\bibitem{CGSS} P. Candela, D. Gonz\'alez-S\'anchez, B. Szegedy \emph{On nilspace systems and their morphisms}, Ergodic Theory Dynam. Systems \textbf{40} (2020), no. 11, 3015--3029.

\bibitem{CGSS-p-hom} P. Candela, D. Gonz\'alez-S\'anchez, B. Szegedy, \emph{On higher-order Fourier analysis in characteristic $p$}, Ergodic Theory Dynam.\ Systems \textbf{43} (2023), no.\ 12, 3971-4040.

\bibitem{CGSS-doucos} P. Candela, D. Gonz\'alez-S\'anchez, B. Szegedy \emph{Free nilspaces, double coset nilspaces, and Gowers norms}, Math. Ann. 395, {\bf 23} (2026).

\bibitem{CGSS-bndtor} P. Candela, D. Gonz\'alez-S\'anchez, B. Szegedy \emph{On the inverse theorem in abelian groups of bounded torsion}, to appear in Discrete Anal., \url{https://arxiv.org/abs/2311.13899}.

\bibitem{CGSS-projnil} P. Candela, D. Gonz\'alez-S\'anchez, B. Szegedy \emph{An inverse theorem for all finite abelian groups via nilmanifolds}, preprint (2025), \url{https://arxiv.org/abs/2512.17468}.

\bibitem{CSinverse} P. Candela, B. Szegedy, \emph{Regularity and inverse theorems for uniformity norms on compact abelian groups and nilmanifolds}, J.\ Reine Angew.\ Math.\ \textbf{789} (2022), 1--42. 

\bibitem{GSz} W. T. Gowers, \emph{A new proof of Szemer\'edi's theorem}, Geom. Funct. Anal. {\bf 11} (2001), no. 3, 465--588.

\bibitem{Gow-3hyps} W. T. Gowers, \emph{Quasirandomness, counting and regularity for 3-uniform hypergraphs}, Comb. Probab. Comput. \textbf{15} (1-2), 143--184, 2006.


\bibitem{Gow-khyps} W. T. Gowers, \emph{Hypergraph regularity and the multidimensional Szemerédi theorem}, Ann. of Math. \textbf{166} (3), 897--946, 2007.


\bibitem{GTarith} B. Green, T. Tao, \emph{An arithmetic regularity lemma, an associated counting lemma, and
applications}, in An Irregular Mind, Bolyai Soc. Math. Stud. 21, J\'anos Bolyai Mathematical Society, Budapest, 2010, pp. 261--334.

\bibitem{GT08}  B. Green, T. Tao, \emph{An inverse theorem for the Gowers $U^3$-norm}, Proc. Edinburgh Math. Soc. (1) \textbf{51} (2008), 73--153.


\bibitem{GT-lin} B. Green and T. Tao, \emph{Linear equations in primes}, 
Ann. of Math. (2) \textbf{171} (2010), no. 3, 1753--1850.

\bibitem{GTorb} B. Green, T. Tao, \emph{The quantitative behaviour of polynomial orbits on nilmanifolds}, Ann. of Math. \textbf{175} (2012), 465--540.


\bibitem{GTZ} B. Green, T. Tao, T. Ziegler, \emph{An inverse theorem for the Gowers $U^{s+1}[N]$-norm}, Ann. of Math. (2) \textbf{176} (2012), no. 2, 1231--1372.

\bibitem{H&M-ProLie} K. H. Hofmann,  S. A. Morris, \emph{The Lie theory of connected pro-Lie groups. A structure theory for pro-Lie algebras, pro-Lie groups, and connected locally compact groups}. EMS Tracts Math., 2. European Mathematical Society (EMS), Z\" urich, 2007.

\bibitem{HKparas} B. Host, B. Kra, \emph{Parallelepipeds, nilpotent groups, and Gowers norms}, Bull. Soc. Math. France \textbf{136} (2008), no. 3, 405--437.

\bibitem{HKbook} B. Host, B. Kra, \emph{Nilpotent structures in ergodic theory}, Math. Surveys Monogr., 236. American Mathematical Society, Providence, RI, 2018. 

\bibitem{JST-bndexp} A. Jamneshan, O. Shalom, T. Tao, \emph{Polynomial towers and inverse Gowers theory for bounded-exponent groups}, preprint (2026), \url{https://arxiv.org/abs/2601.00961}.

\bibitem{J&T} A. Jamneshan, T. Tao, \emph{The inverse theorem for the $U^3$ Gowers uniformity norm on arbitrary finite abelian groups: Fourier-analytic and ergodic approaches}, Discrete Anal., 2023:11, 48 pp.

\bibitem{LSS} J. Leng, A. Sah, M. Sawhney, \emph{Quasipolynomial bounds on the inverse theorem for the Gowers $U^{s+1}[N]$-norm}, prerpint (2024), \url{https://arxiv.org/abs/2402.17994}.

\bibitem{Manners} F. Manners, \emph{Quantitative bounds in the inverse theorem for the Gowers $U^{s+1}$-norms over cyclic groups} preprint (2018), \url{http://www.arxiv.org/abs/1811.00718}.


\bibitem{Mili-Gen} L. Mili\'cevi\'c, \emph{General inverse theory for the $U^4$ norm}, preprint (2026), \url{https://arxiv.org/abs/2601.01682}.

\bibitem{Mili2} L. Mili\'cevi\'c, \emph{Quantitative inverse theorem for Gowers uniformity norms $U^5$ and $U^6$ in $\mb{F}_2^n$}, Canad. J. Math. \textbf{76} (2024), no. 4, 1289--1338.


\bibitem{RudinFAOG} W. Rudin, \emph{Fourier analysis on groups}, Interscience Tracts in Pure and Applied Mathematics, No. 12. Interscience Publishers (a division of John Wiley \& Sons, Inc.), New York-London, 1962. ix+285 pp.

\bibitem{T-V} T. Tao and V. Vu, \emph{Additive combinatorics}, Cambridge Stud. Adv. Math., 105 Cambridge University Press, Cambridge, 2006. xviii+512 pp.

\bibitem{T&Z-Low} T. Tao, T. Ziegler, \emph{The inverse conjecture for the Gowers norm over finite fields in low characteristic}, Ann. Comb. \textbf{16} (2012), 121--188.

\end{thebibliography}
\end{document}